\newtheorem{theorem}{Theorem}[section]
\newtheorem{prop}[theorem]{Proposition}
\newtheorem{lem}[theorem]{Lemma}
\newtheorem{lemma}[theorem]{Lemma}
\newtheorem{coro}[theorem]{Corollary}
\newtheorem{prop-def}{Proposition-Definition}[section]
\theoremstyle{definition}
\newtheorem{defn}[theorem]{Definition}
\newtheorem{remark}[theorem]{Remark}
\newtheorem{conjecture}[theorem]{Conjecture}
\newtheorem{exam}[theorem]{Example}
\newcommand{\ignore}[1]{}
\newcommand{\nc}{\newcommand}
\newcommand{\delete}[1]{}
\nc{\mlabel}[1]{\label{#1}}  
\nc{\mcite}[1]{\cite{#1}}  
\nc{\mref}[1]{\ref{#1}}  
\nc{\mbibitem}[1]{\bibitem{#1}} 
\nc{\mcite}[1]{\cite{#1}{{\bf{{\ }(#1)}}}}  
\nc{\mlabel}[1]{\label{#1}  
{\hfill \hspace{1cm}{\bf{{\ }\hfill(#1)}}}}
\nc{\mref}[1]{\ref{#1}{{\bf{{\ }(#1)}}}}  
\nc{\mbibitem}[1]{\bibitem[\bf #1]{#1}} 
\nc{\wvec}[2]{{\scriptsize{\big [ \!\!
    \begin{array}{c} #1 \\ #2 \end{array} \!\! \big ]}}}
\nc{\mrm}[1]{{\rm #1}}
\nc{\spair}[2]{\big[\begin{array}{c}\scs{#1} \\ \scs{#2} \end{array} \big]}
\nc{\rd}{\mrm{rd}}
\nc{\monic}{{\mathfrak M}}
\nc{\deff}{K}           
\nc{\coof}{L}           
\nc{\coef}{\bfk}        
\nc{\close}[1]{\overline{#1}}
\nc{\cl}{c}                 
\nc{\op}{o}                 
\nc{\io}{{io}}
\nc{\co}{\#}                 
\nc{\cc}{\mathcal{C}^c}      
\nc{\subd}{\mathrm{sub}}
\nc{\ccsp}{\calv^\cl}       
\nc{\ccrel}{\calw^\cl}      
\nc{\cccl}{\calh^\cl}       
\nc{\ocsp}{\calv^\op}       
\nc{\oc}{\mathcal{C}^o}      
\nc{\chenc}{\mathcal{CC}^o}  
\nc{\soc}{\mathcal{SC}^o}   
\nc{\doc}{\mathcal{DC}^o}    
\nc{\dsoc}{\mathcal{DSC}^o}  
\nc{\dosmc}{\mathcal{DTC}^o}    
\nc{\dfrsoc}{\mathcal{DFRSC}^o} 
\nc{\dfrcc}{\mathcal{DFRC}^c} 
\nc{\doch}{\mathcal{DCH}^o}    
\nc{\dcch}{\mathcal{DCH}^c}     
\nc{\dfrm}{\mathcal{DFRM}}   
\nc{\ocmzv}{\mathcal{CZV}^o} 
\nc{\oscmzv}{\mathcal{SZV}^o}  
\nc{\lzvset}{\mathcal{LZV}} 
\nc{\szvset}{\mathcal{SZV}} 
\nc{\spcc}{\mathcal{SpC}^c} 
\nc{\mcc}{\mathcal{MC}^c}    
\nc{\dcc}{\mathcal{DCC}^c}    
\nc{\cmzv}{\mathcal{DCZV}^c} 
\nc{\deco}[1]{\overline{#1}^o}  
\nc{\decc}[1]{\overline{#1}^c}  
\nc{\tn}{T}     
\nc{\dscc}{\mathcal{DSC}^c}  
\nc{\dsmc}{\mathcal{DMC}^c}  
\nc{\drc}{\mathcal{DRC}^c}  
\nc{\cdsmc}{\mathcal{CDMC}^c} 
\nc{\cs}{\mathcal{S}}   
\nc{\cm}{\mathcal{SM}}   
\nc{\ds}{\mathcal{DS}}  
\nc{\dm}{\mathcal{DM}}  
\nc{\smzv}{\mathcal{SZV}} 
\nc{\ocrel}{\calw^\op}      
\nc{\occl}{\calh^\op}       
\nc{\usp}{\{0\}}           
\nc{\cset}{\calc^\co}      
\nc{\csp}{\calv^\co}       
\nc{\crel}{\calw^\co}      
\nc{\ccl}{\calh^\co}       
\nc{\cone}[1]{\langle #1\rangle}
\nc{\ccone}[1]{\langle #1\rangle^\cl}
\nc{\ocone}[1]{\langle #1\rangle^\op}
\nc{\ocmzvset}{\mathcal{CZV}^o} 
\nc{\ccmzvset}{\mathcal{LZV}} 
\nc{\czvset}{\mathcal{CZV}}
\nc{\mzvset}{\mathcal{MZV}}
\nc{\dcset}{\calc^\co}      
\nc{\dcsp}{\calv^\co}       
\nc{\dcrel}{\calw^\co}      
\nc{\dccl}{\calh^\co}       
\nc{\dccset}{\calc^\cl}      
\nc{\dccsp}{\calv^\cl}       
\nc{\dccrel}{\calw^\cl}      
\nc{\dcccl}{\calh^\cl}       
\nc{\docrel}{\calw^\op}      
\nc{\doccl}{\calh^\op}       
\nc{\ccmzv}{\mathcal{CZV}^c}
\nc{\mzv}{\mathcal{MZV}}
\nc {\dtcp}{\mathcal {DTP}}
\nc{\dsf}{\mathcal{DSF}}    
\nc{\GL}{{\mrm GL}}
\nc{\SF}{{\mrm SF}} 
\nc{\depth}{{\mrm d}}
\nc{\ot}{\otimes}
\nc{\vep}{\varepsilon}
\nc{\id}{\mrm{id}}
\nc{\bfk}{{\bf k}}
\nc{\free}[1]{\bar{#1}}
\nc{\rk}{\mathrm{rk}}
\nc{\lc}{\lfloor}
\nc{\rc}{\rfloor}
\nc{\bt}{\bar{t}}
\nc{\dcup}{\atop{\cdot}{\cup}}
\nc{\lin}{\mathrm{lin}}
\nc{\Proj}{\mathrm{Proj}}
\nc{\dirlim}{\displaystyle{\lim_{\longrightarrow}}\,}
\nc{\im}{\mathrm{im}}
\nc{\bin}[2]{ (_{\stackrel{\scs{#1}}{\scs{#2}}})}  
\nc{\binc}[2]{ \left (\!\! \begin{array}{c} \scs{#1}\\
    \scs{#2} \end{array}\!\! \right )}  
\nc{\bincc}[2]{  \left ( {\scs{#1} \atop
    \vspace{-.5cm}\scs{#2}} \right )}  
\nc{\scs}[1]{\scriptstyle{#1}} 
\nc{\sha}{{\mbox{\cyr X}}}  
\nc{\ssha}{\mathop{\mbox{\scyr X}}} 
\font\cyr=wncyr10
\font\scyr=wncyr8
\nc{\BA}{{\mathbb A}} \nc{\CC}{{\mathbb C}} \nc{\DD}{{\mathbb D}}
\nc{\EE}{{\mathbb E}} \nc{\FF}{{\mathbb F}} \nc{\GG}{{\mathbb G}}
\nc{\HH}{{\mathbb H}} \nc{\LL}{{\mathbb L}} \nc{\NN}{{\mathbb N}}
\nc{\PP}{{\mathbb P}} \nc{\QQ}{{\mathbb Q}} \nc{\RR}{{\mathbb R}} \nc{\TT}{{\mathbb T}}
\nc{\VV}{{\mathbb V}} \nc{\ZZ}{{\mathbb Z}}
\nc{\cal}{\mathcal} \nc{\cala}{{\mathcal A}} \nc{\calc}{{\mathcal C}}
\nc{\calb}{{\mathcal B}} \nc{\cald}{{\mathcal D}}
\nc{\cale}{{\mathcal E}} \nc{\calf}{{\mathcal F}}
\nc{\calfr}{{{\mathcal F}^{\,r}}} \nc{\calfo}{{\mathcal F}^0}
\nc{\calfro}{{\mathcal F}^{\,r,0}} \nc{\oF}{{\overline{F}}}
\nc{\calg}{{\mathcal G}} \nc{\calh}{{\mathcal H}}
\nc{\cali}{{\mathcal I}} \nc{\calj}{{\mathcal J}}
\nc{\call}{{\mathcal L}} \nc{\calm}{{\mathcal M}}
\nc{\oM}{\overline{M}} \nc{\caln}{{\mathcal N}} \nc{\calo}{{\mathcal
O}} \nc{\calp}{{\mathcal P}} \nc{\calr}{{\mathcal R}}
\nc{\cals}{{\mathcal S}} \nc{\calt}{{\mathcal T}}
\nc{\caltr}{{\mathcal T}^{\,r}} \nc{\calu}{{\mathcal U}}
\nc{\calv}{{\mathcal V}} \nc{\calw}{{\mathcal W}}
\nc{\calx}{{\mathcal X}} \nc{\CA}{\mathcal{A}}
\nc{\fraka}{{\mathfrak a}} \nc{\frakB}{{\mathfrak B}}
\nc{\frakb}{{\mathfrak b}} \nc{\frakd}{{\mathfrak d}}
\nc{\oD}{\overline{D}}
\nc{\frakD}{{\mathcal D}} \nc{\frakf}{\mathfrak{f}}
\nc{\frakF}{{\mathfrak F}} \nc{\frakg}{{\mathfrak g}}
\nc{\frakI}{{\mathcal I}}
\nc{\frakL}{{\mathcal L}}
\nc{\frakm}{{\mathfrak m}}
\nc{\ofrakm}{\bar{\frakm}}
\nc{\frakM}{{\mathcal M}}
\nc{\frakMo}{{\mathfrak M}^0} \nc{\frakp}{{\mathfrak p}}
\nc{\frakP}{{\mathcal P}}
\nc{\frakR}{{\mathcal R}}
\nc{\frakS}{{\mathcal S}} \nc{\frakSo}{{\mathfrak S}^0}
\nc{\fraks}{{\mathfrak s}} \nc{\os}{\overline{\fraks}}
\nc{\frakT}{{\mathfrak T}}
\nc{\oT}{\overline{T}}
\nc{\frakV}{{\mathfrak V}}
\nc{\frakv}{{\mathfrak v}}
\nc{\oV}{\overline{V}}
\nc{\frakW}{{\mathcal W}}
\nc{\frakw}{{\mathfrak w}}
\nc{\oW}{\overline{W}}
\nc{\frakX}{{\mathfrak X}} \nc{\frakXo}{{\mathfrak X}^0}
\nc{\frakx}{{\mathbf x}}
\nc{\frakTx}{\frakT}      
\nc{\frakTa}{\frakT^a}        
\nc{\frakTxo}{\frakTx^0}   
\nc{\caltao}{\calt^{a,0}}   
\nc{\ox}{\overline{\frakx}} \nc{\fraky}{{\mathfrak y}}
\nc{\frakz}{{\mathfrak z}} \nc{\oX}{\overline{X}}
\nc{\zb}[1]{\textcolor{blue}{Bin: #1}}
\nc{\li}[1]{\textcolor{red}{Li: #1}}
\nc{\sy}[1]{\textcolor{purple}{Sylvie: #1}}
\begin{document}

\title{Conical zeta values and their double subdivision relations}

\author{Li Guo}
\address{Department of Mathematics and Computer Science,
         Rutgers University,
         Newark, NJ 07102, USA}
\email{liguo@rutgers.edu}

\author{Sylvie Paycha}
\address{Institute of Mathematics,
University of Potsdam,
Am Neuen Palais 10,
D-14469 Potsdam, Germany}
\email{paycha@math.uni-potsdam.de}

\author{Bin Zhang}
\address{Yangtze Center of Mathematics,
Sichuan University, Chengdu, 610064, P. R. China}
\email{zhangbin@scu.edu.cn}

\date{\today}

\maketitle

\begin{abstract}
We introduce the concept of a conical zeta value as a geometric generalization of a multiple zeta value in the context of convex cones. The quasi-shuffle and shuffle relations of multiple zeta values are generalized to open cone subdivision and closed cone subdivision relations respectively for conical zeta values. In order to achieve the closed cone subdivision relation, we also interpret linear relations among fractions as subdivisions of decorated closed cones. As a generalization of the double shuffle relation of multiple zeta values, we give the double subdivision relation of conical zeta values and formulate the extended double subdivision relation conjecture for conical zeta values.
\end{abstract}

\tableofcontents

\allowdisplaybreaks

\section{Introduction}
\mlabel{sec:int}

\subsection{Multiple zeta values and conical zeta values}

Multiple zeta values (MZVs) are special values of the multi-variable analytic function
\begin{equation}
\zeta(s_1,\cdots, s_k)=\sum_{n_1>\cdots>n_k>0} \frac{1}{n_1^{s_1}\cdots n_k^{s_k}}
\mlabel{eq:mzv}
\end{equation}
at integers $s_1\geq 2, s_i\geq 1, 1\leq i\leq k.$ Their
study in the two variable case goes back to Goldbach and Euler.
The general concept was introduced in the early 1990s, leading   to developments in     both mathematics~\mcite{Ho0,Za}, where MZVs conjecturally span (periods of) mixed Tate motives, and physics~\mcite{BK}, where MZVs  mysteriously appeared in Feynman integral computations. Since then
the subject has been studied intensively with interactions to a broad range of areas, including arithmetic geometry, combinatorics, number theory, knot theory, Hopf algebra, quantum field theory and mirror symmetry~\mcite{An,3BL,BB,Br,GM,GZ,Ho1,Ho3,IKZ,LM,Ra,Te,Za2}.

MZVs have several generalizations, such as Hurwitz multiple zeta values and multiple polylogarithms. In this paper we give a geometric generalization of MZVs in the context of convex cones. For an open convex cones   $C=\sum\limits_{i=1}^r \RR_{> 0} v_i$  spanned by vectors $v_i\in \ZZ^k_{\geq 0}, 1\leq i\leq r$, we define the conical zeta function associated with $C$ by
$$\zeta(C;s_1,\cdots,s_k):= \sum_{(n_1,\cdots,n_k)\in C\cap \ZZ^k_{\geq 0}} \frac{1}{n_1^{s_1}\cdots n_k^{s_k}}, s_i\in \CC, 1\leq i\leq k,
$$
where it converges and define a {\bf conical zeta value} to be the value of the function at nonnegative integer arguments.
Such values contain MZVs as special cases when the cones are taken to be Chen cones $\{x_1>\cdots >x_k>0\}$.

\subsection{Double shuffle and double subdivision relations}

A major goal in the study of MZVs is to determine all algebraic relations among the MZVs.
According to the {Double Shuffle Conjecture}~\mcite{Ho1,IKZ}, all such relations come from the shuffle and quasi-shuffle relations (the extended double shuffle relation) that encode products of MZVs from their summation and integration representations~\mcite{LM}. In this spirit, we generalize the double shuffle relation of MZVs to conical zeta values as subdivisions of open and closed cones.

The double shuffle relation can be summarized in the following commutative diagram that we will generalize to conical zeta values.
\begin{equation}
\xymatrix{(\calh^\ast_0,\ast) \ar[rd]_{\zeta^\ast} && \ \ (\calh^{\ssha}_0,\ssha)\  \ar@{>->>}[ll]^{\eta} \ar[ld]^{\zeta^{\ssha}} \\
& \QQ\mzvset &
}
\mlabel{eq:dsh}
\end{equation}
Here
\begin{enumerate}
\item
$\calh^\ast_0:= \QQ\, 1\oplus \hspace{-.5cm} \bigoplus\limits_{s_1,\cdots,s_k\geq 1, s_1\geq 2, k\geq 1} z_{s_1}\cdots z_{s_k}$
is the quasi-shuffle algebra~\mcite{H} with the quasi-shuffle product $\ast$, encoding the MZVs by the algebra homomorphism
$$\zeta^\ast: \calh^\ast_0\to \QQ\mzvset, \quad z_{s_1}\cdots z_{s_k}\mapsto \zeta(s_1,\cdots,s_k);
$$
\item
$\calh^{\ssha}_0:=\QQ\,1\oplus \hspace{-.5cm} \bigoplus\limits_{s_1,\cdots,s_k\geq 1,s_1\geq 2, k\geq 1} x_0^{s_1-1}x_1\cdots x_0^{s_k-1}x_1$
is the shuffle algebra with the shuffle product $\ssha$, encoding the MZVs by the algebra homomorphism
$$\zeta^{\ssha}: \calh^{\ssha}_0 \to \QQ\mzvset, \quad
x_0^{s_1-1}x_1\cdots x_0^{s_k-1}x_1$$
and
\item
$\eta: \calh^{\ssha}_0 \to \calh^\ast_0, x_0^{s_1-1}x_1\cdots x_0^{s_k-1}x_1\to z_{s_1}\cdots z_{s_k}, $
is the obvious linear bijection.
\end{enumerate}
The Double Shuffle Conjecture states that the kernel of $\zeta^\ast$ is the ideal of $\calh^\ast_1$ generated by the set
$$ \{ w_1\ast w_2 - \eta^{-1}(\eta(w_1)\ssha \eta(w_2)), \,|\, w_1\in \{z_1\}\cup \calh^\ast_0, w_2\in \calh^\ast_0\}.$$

The quasi-shuffle (stuffle) encoding $\zeta^\ast$ of MZVs follows directly from the definition of MZVs.
This is generalized to CZVs as open cone subdivisions.

The shuffle encoding $\zeta^{\ssha}$ is less direct. It is derived by the integral representation of MZVs~\mcite{LM} or, alternatively, from the integral representation of the multiple zeta fractions~\mcite{GX}. As shown there, the multiple zeta fractions
\begin{equation} \mathfrak{f}\wvec{s_1,\cdots,s_k}{u_1,\cdots,u_k}
:= \frac{1}{(u_1+\cdots+u_k)^{s_1}(u_2+\cdots+u_k)^{s_2}\cdots
u_k^{s_k}}, \quad s_i, u_i\geq 1, 1\leq i\leq k \mlabel{eq:pafr}
\end{equation}
on the one hand give multiple zeta values
\begin{equation}
\zeta(s_1,\cdots,s_k)=\sum_{u_1,\cdots,u_k\geq 1}
\mathfrak{f}\wvec{s_1,\cdots,s_k}{u_1,\cdots,u_k} \mlabel{eq:mzvpf}
\end{equation}
and on the other hand satisfy the shuffle relation. The shuffle relation of MZVs then follows by summing over the the $u_i$s.

To generalize the shuffle relation of MZVs to CZVs, we interpret the shuffle relation of multiple zeta fractions geometrically, starting from the simple observation that relations among fractions such as $$\frac{1}{u_1(u_1+u_2)}+\frac{1}{u_2(u_1+u_2)}= \frac{1}{u_1 u_2}$$ are related to subdivisions of cones, here the cone $x_1\ge 0, x_2\ge 0$ is seen as a union of the cones $x_1\ge x_2\ge 0$ and $x_2\ge x_1\ge 0$. More generally, we interpret linear relations among simple fractions i.e., of the type $\frac{1}{L_1L_2\cdots L_k}$ where $L_1,\cdots, L_k$ are linear independent linear forms, to subdivisions of simplicial cones. A differentiation procedure then further relates fractions $\frac{1}{L_1^{s_1}L_2^{s_2}\cdots L_k^{s_k}}$ with $s_i\in \NN$, to what we call algebraic subdivisions of decorated cones obtained from differentiating   geometric subdivisions of the underlying geometric cones. Such fractions arise as Laplace transforms: for example, if $L_i= u_i+\cdots +u_k$,  for $s_1>1,s_2\geq 1,\cdots, s_k\geq 1$, we observe that the multiple zeta fractions
$$ \mathfrak{f}\wvec{s_1,\cdots,s_k}{u_1,\cdots,u_k}= \frac{1}{L_1^{s_1}L_2^{s_2}\cdots
L_k^{s_k}} $$  can be written as differentiated Laplace transforms on the closed cone
$x_1\ge\cdots\ge x_k\ge 0$     (see Proposition (\mref {prop:DerFrac}) for notations):
\begin{eqnarray*}\mathfrak{f}\wvec{s_1,\cdots,s_k}{u_1,\cdots,u_k}&=& \frac
1{(s_1-1)!\cdots (s_k -1)!}\partial_{L_1^*}^{s_1-1}\cdots
\partial_{L_k^*}^{s_k-1}	
\frac{1}{L_1\, \cdots
L_k }\\
&=& \frac
1{(s_1-1)!\cdots (s_k -1)!}\partial_{L_1^*}^{s_1-1}\cdots
\partial_{L_k^*}^{s_k-1}\int_{x_1>\cdots>x_k>0} e^{-\sum_{i=1}^k x_i u_i}\, dx_1\cdots dx_k.
\end{eqnarray*}
This is the starting point for our generalization of the double shuffle relation among zeta values associated  with more general cones.
Eventually we obtain a geometric interpretation of the commutative diagram in Eq.~(\mref{eq:dsh}) and generalize it to a commutative diagram in Eq.~(\mref{eq:dsub}) of double subdivision relation for CZVs.

\subsection{Layout of the paper}

After summarizing concepts and basic facts on convex cones, we give in Section~\mref{sec:ocone} the definition of conical zeta values and their open subdivision relation as a generalization of the stuffle (quasi-shuffle) relation of MZVs. In order to generalize the shuffle relation of MZVs to CZVs, we generalize the shuffle relation of multiple zeta fractions to a suitable relation for a much larger class of fractions derived from CZVs by means of a differentiation procedure similar to the one described above in the case of MZVs.
We achieve this in two steps. In   Section~\mref{sec:ccone}, we relate via a bijection closed cones modulo subdivisions to simple fractions. Thus linear relations among simple fractions are precisely those coming from subdivisions of closed simplicial cones. By means of the natural differential structure on fractions, in Section~\mref{sec:dcone} we infer from this bijection a one to one correspondence between decorated cones modulo subdivision and pure fractions. This correspondence between cones and fractions is applied in Section~\mref{sec:czv} to provide  closed subdivision relations of CZVs when  expressed as Shintani zeta values. In doing so, the shuffle product of multiple zeta fractions seen as decompositions of fractions with linear poles is reflected geometrically as subdivisions of the closed Chen cones. Combining the open and closed subdivision relations with the concept of cone pairs gives the double subdivision relation of CVZs that generalizes the double shuffle relation of MZVs. Finally it is shown that CVZs and Shintani zeta values span the same linear space.
\smallskip

In this paper we shall not touch on divergent conical zeta values, which will be the subject of a forthcoming paper. Divergent MZVs, which in recent years have been studied in the algebraic framework of Connes and Kreimer~\mcite{CK} inspired by the method of renormalization of quantum field theory, can be defined using several approaches such as ~\mcite{BV,GZ,MP}. In this forthcoming paper, we construct  a coalgebra structure on cones and as  an application, we show that renormalization of conical zeta values recovers the local Euler-Maclaurin formula~\mcite{BV,GM}.

\section {Convex cones and conical zeta values}
\mlabel{sec:ocone}

\subsection {Polyhedral cones}
We first collect basic notations and facts (mostly following~\mcite{Fu} and \mcite{Zi})  on cones that will be used in this paper.
Let $\deff\subseteq \RR$ be a field and let $k\geq 0$ be an integer. In practice $\deff$ is usually the field $\QQ$ of rational numbers.
\begin{enumerate}
\item
A {\bf closed (polyhedral) cone} (resp. An {\bf open (polyhedral) cone}) in $\deff^k$ is the convex set
\begin{eqnarray}
&&\ccone{v_1,\cdots,v_n}:=\deff_{\geq 0}v_1+\cdots+\deff_{\geq 0}v_n, \mlabel{eq:opencone} \\
\text{ (resp. } &&\ocone{v_1,\cdots,v_n}:=\deff_{> 0}v_1+\cdots+\deff_{> 0}v_n \text{)},
 \quad v_i\in \deff^k_{\geq 0}, 1\leq i\leq n.
\notag
\end{eqnarray}
\item A cone is always taken to be a closed or open polyhedral cone.
In particular, the term polyhedral will be omitted in this paper and we sometimes write $ \langle v_1,\cdots,v_n\rangle$ when the closedness or the openness does not play any role.
\item
For a cone $C=\cone{v_1,\cdots,v_n}$ in $\deff^k$ and a field $\coof \subseteq \RR$, let $C(\coof)$ denote $\coof_{\geq 0} v_1+\cdots +\coof_{\geq 0}v_n$ if $C$ is closed and $\coof_{> 0} v_1+\cdots +\coof_{> 0}v_n$ if $C$ is open.
\item
The set $\{v_1,\cdots,v_n\}$ in the definition of a cone is called the {\bf generating set} or the {\bf spanning set} of the cone. The dimension of the $\deff$-linear subspace generated by the cone is called its {\bf dimension}.
\item A closed cone in $\deff^k$ can also be described as the intersection $\cap_i H_{u_i}$ of finitely many half spaces $H_{u_i}=\{x\in \deff ^k\  | u_i(x)\ge 0\}$ defined by linear functionals $u_i$ with $\deff$-coefficients on $\deff^k$ (see e.g. Theorem 1.3 in \mcite{Zi}).
\item
Let  $\cc_k(\deff)$ (resp. $\oc_k(\deff)$) denote the set of  closed (resp. open cones) in $\deff^k$, $k\geq 1$. For $k=0$ we set
  $\cc_{0}(\deff)=\{0\}$ (resp. $\oc_{0}(\deff)=\{0\}$) by convention.
The natural inclusions $\cc_k(\deff) \to \cc_{k+1}(\deff)$ (resp. $\oc_k(\deff)\to \oc_{k+1}(\deff)$) induced by the natural inclusion $\deff^k \to \deff^{k+1}$,
give rise to the direct  limit set $\cc(\deff)=\dirlim \cc_k(\deff)$ (resp. $\oc(\deff)=\dirlim \oc_k(\deff)$).
\item
A {\bf simplicial cone} or a {\bf simplex cone} is a cone spanned by linearly independent vectors.
\item
A cone in $\QQ^k$ is called a {\bf rational cone}. Thus a rational cone is spanned by vectors in $\QQ^k$ (equivalently in $\ZZ^k$).
\item
A {\bf smooth cone} is a rational cone with a spanning set that is a part of a basis of $\ZZ^k\subseteq \RR^k$. In this case, the spanning set is unique and is called the  {\bf  primary set} of the cone.
\item
A cone is called {\bf strongly convex} if it does not contain any linear subspace.
\item A {\bf face} of a closed cone $\ccone{v_1,\cdots,v_n}$ in $\deff^k$ is a subset of the form $\ccone{v_1,\cdots,v_n}\cap \{u=0\}$, where $u:\deff^k\to \deff$ is a linear function with $\deff$-coefficients which is non-negative on $\ccone{v_1,\cdots,v_n}$.
\item  A {\bf face} of an open cone $\ocone{v_1,\cdots,v_n}$ in $\deff^k$ is an open cone of the form $\ocone{v_{i_1},\cdots,v_{i_r}}(\deff)$ where $\ccone{v_{i_1},\cdots,v_{i_r}}(\deff)$ is a face of $\ccone{v_1,\cdots,v_n}$.
\item A face  $F$ of a cone $C$   is again a cone and we write $F\leq C$.
If $F$ is a proper face  of a cone $C$ we write $F<C$.
 A 1-dimensional face is called an {\bf edge}. A codimension 1 face is called a {\bf facet}.
\item
For $\vec{x}=(x_1,\cdots ,x_k)$ and $\vec{y}=(y_1,\cdots ,y_k)$ in $\RR^k$, let $(\vec{x},\vec{y})$ denote the inner product $x_1y_1+\cdots +x_ky_k$. Through this inner product, $\RR^k$ is identified with its own dual space $(\RR^k)^*$.
\end{enumerate}

\subsection {Subdivision of cones}

In this subsection, we recall some facts about subdivisions of polyhedral cones. For the sake of completeness, we provide proofs for some of the results.

\begin {defn}
{\rm
\begin {enumerate}
\item
A {\bf subdivision of a closed cone $C\in \cc_k(\deff)$} is a set $\{C_1,\cdots,C_r\}\subseteq \cc_k(\deff)$ such that
\begin{enumerate}
\item[(i)] $C=\cup_{i=1}^r C_i$,
\item[(ii)] $C_1,\cdots,C_r$ have the same dimension as $C$ and
\item[(iii)] intersect along their faces i.e.,  $
C_i\cap C_j$ is a face of both $C_i$ and $C_j$.
\end{enumerate}
\item
A {\bf subdivision of an open cone $C$} is the set of the relative interiors of the closed cones in
\begin{equation}
\{\cap_{j=1}^t D_{i_j}\ |\ \{i_1,\cdots,i_t\}\subseteq \{1,\cdots,r\}, 1\leq t\leq r\}
\mlabel{eq:odiv}
\end{equation}
where $\{D_1,\cdots,D_t\}$ is a subdivision of the closure $\close{C}$ of $C$. By convention, the relative interior of $\{0\}$ is $\{0\}$.
\item
A subdivision of a rational cone is called {\bf smooth} if all the cones in the subdivision are smooth.
\end{enumerate}
}
\end{defn}
\begin {prop}
\begin{enumerate}
\item
Any cone in $\deff^n$ can be subdivided into strongly
convex simplicial cones in $\deff^n$.
\mlabel{it:ConeToSim}
\item
Any strongly convex simplicial rational cone can be subdivided into smooth cones.
\mlabel{it:ratsm}
\end{enumerate}
\mlabel{pp:ConeToSim}
\mlabel{pp:ratsm}
\end{prop}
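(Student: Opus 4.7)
The plan is to reduce both statements to the case of closed cones and then prove (a) by cross-sectional polytope triangulation and (b) by iterated star subdivision of a simplicial cone, using the multiplicity as a monovariant.

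\textbf{Reduction to the closed case.} By the very definition of the subdivision of an open cone $C$ as the collection of relative interiors of the pieces in a subdivision of $\overline{C}$, both (a) and (b) for open cones follow from their closed analogues, since the relative interior of a simplicial (resp. smooth) closed cone is simplicial (resp. smooth) in the open sense. Moreover, every closed cone considered here is generated by vectors $v_i\in\deff^k_{\ge 0}$, hence lies in the first orthant and is therefore automatically strongly convex; so the substantive content of (a) is the existence of a simplicial subdivision.

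\textbf{Part (a).} I would pass to a cross-section. For a closed cone $C$ of dimension $d$, choose a linear functional $u$ on $\deff^k$ with $\deff$-coefficients that is strictly positive on $C\setminus\{0\}$; such $u$ exists by strong convexity. The slice $P:=C\cap\{u=1\}$ is a bounded convex polytope of dimension $d-1$ whose vertices are in bijection with the edges of $C$. Triangulate $P$ without introducing new vertices, for instance via a pulling triangulation: fix a vertex ordering $p_1,\dots,p_m$, inductively triangulate each facet of $P$ not containing $p_1$, and cone from $p_1$ over those simplices. Lifting this triangulation through the cone-over-the-slice correspondence produces a simplicial subdivision of $C$.

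\textbf{Part (b).} For a strongly convex simplicial rational cone $\sigma=\ccone{v_1,\dots,v_d}$, taking the $v_i$ to be the primitive lattice generators of its edges, define the multiplicity
\[
\mathrm{mult}(\sigma):=\bigl[\,\ZZ^k\cap\mathrm{span}(\sigma)\,:\,\ZZ v_1+\cdots+\ZZ v_d\,\bigr],
\]
so that $\sigma$ is smooth iff $\mathrm{mult}(\sigma)=1$. If $\mathrm{mult}(\sigma)>1$, then the half-open fundamental parallelepiped $\{\sum_i t_iv_i:0\le t_i<1\}$ contains a nonzero lattice vector $w=\sum_i t_iv_i$. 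Perform the star subdivision of $\sigma$ along $w$: replace $\sigma$ by the simplicial rational cones $\sigma_i:=\ccone{v_1,\dots,v_{i-1},w,v_{i+1},\dots,v_d}$ for those $i$ with $t_i>0$. A determinant computation yields $\mathrm{mult}(\sigma_i)=t_i\,\mathrm{mult}(\sigma)<\mathrm{mult}(\sigma)$, so iterating the procedure terminates in finitely many steps at a smooth subdivision.

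\textbf{Main obstacle.} The most delicate point is in part (a): the triangulations inductively produced on the various facets of $P$ must agree along their shared sub-faces in order to assemble into a genuine simplicial complex. The pulling construction finesses this by using a single global vertex ordering, but checking that the resulting simplices cover $P$ with the correct intersection pattern still requires care. Part (b) has no comparable subtlety: termination follows from the strict decrease of the positive-integer invariant $\mathrm{mult}$, and the compatibility of the $\sigma_i$ along common faces is a direct consequence of the identity $w=\sum_i t_iv_i$.
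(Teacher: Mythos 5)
Your argument is correct in substance but follows a genuinely different route from the paper's, so a comparison is in order. For part (a) the paper does not pass to a cross-sectional polytope: it builds a barycentric-type subdivision directly on the cone, choosing a relative-interior vector $v_F$ for each face $F$ and taking the simplicial cones spanned by flags $F_1<\cdots<F_\ell$ of faces, then proving by induction on dimension that these cover $C$. Your pulling triangulation of the slice $P=C\cap\{u=1\}$ is the more economical construction (it introduces no new rays, whereas the barycentric subdivision adds a ray for every face), at the cost of the compatibility check along shared facets that you correctly identify as the delicate point; the paper's face-flag construction gets the intersection pattern for free but needs the covering argument instead. For part (b) the paper simply cites the exercise on page 48 of Fulton, and the star-subdivision-with-multiplicity argument you give is precisely the standard solution of that exercise, so here you are supplying a proof the paper omits rather than diverging from it.

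One caveat on your reduction: you dispose of strong convexity by asserting that every cone under consideration lies in the first orthant. That is true of the cones as defined in Section 2.1, but the proposition is invoked later (in the proof of Lemma \ref{lem:ConeSubd}) on the cones $C_\nu=\bigcap H_{\nu(i,j)u^i_j}$, which are cut out by half-spaces of both signs and need not be strongly convex; and your cross-section argument genuinely requires strong convexity, since no functional strictly positive on $C\setminus\{0\}$ exists otherwise. The paper's proof opens with exactly the needed repair --- intersect $C$ with the $2^n$ coordinate orthants and subdivide each piece --- and you should include that one-line reduction rather than treat strong convexity as automatic. With that addition your proof is complete.
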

\begin{proof}
(\ref{it:ConeToSim})
By taking the intersections with coordinate orthants, we can assume that the cone is strongly convex.

Now for a strongly convex cone $C$ in $\deff^n$, we take its barycenter type subdivision built as follows. In the following  we identify a point $M$ in $\deff^n$ with the vector $\vec{OM}=v$. For each face $F$ of $C$, take a vector $v_F\in \deff^n$ in the relative interior of $F$. Note that, since $F(\QQ)\supseteq F$ and $F(\QQ)$ is dense in $F(\RR)$, such a vector always exists. Let $n=\dim(C)$. If the cone $C$ is open, then the open cones $\ocone{v_{F_1}, \cdots v_{F_\ell}}\subset C$ with $F_1 <\cdots <F_\ell, 0\leq \ell\leq n,$ are simplicial and intersect along their faces. If the cone $C$ is closed, then the closed cones $\ccone{v_{F_1}, \cdots, v_{F_n}}$ with $F_1 <\cdots <F_n,$ are simplicial and intersect along their faces. Thus to prove that this gives a subdivision, we only need to prove that the union of these cones is $C$.

First we prove this for a closed cone $C$. We proceed by induction on the dimension $n$ of $C$. Since the case   $n=1$ is trivial, we assume that $n\geq 2$. For any vector $v$ in $C$, if $v$ is a multiple of the vector $v_C$ chosen in the relative interior of $C$ as above, then we have the conclusion. Otherwise the vectors $v$ and $v_C$ span a 2-dimensional linear space $V$. Let $\deff_{\geq 0}v_1$ and $\deff_{\geq 0}v_2$ be the outmost intersections. Then by convexity, $\langle v_1, v_2\rangle$ is in $C(\deff)$. By the choice of $v_1$ and $v_2$, $\langle u_1,u_2\rangle = V\cap C$. Thus the intersection of $V$ with the boundary of $C$ are the two rays $\deff_{\geq 0}u_1$ and $\deff_{\geq 0}u_2$. The vectors $v_1$, $v_2$ lie  in some facets since the boundary of $C$ is the union of its facets.  Then $v$ is a non-negative $\deff$-linear combination of $v_C$ with one of the two vectors  $v_1$ or $v_2$, say $v_1$.
By the induction hypothesis, $v_1$ is in $\ccone{v_{F_1},\cdots, v_{F_{n-1}}}$ with $F_{v_1}<\cdots<F_{v_{n-1}}$ where $F_{v_{n-1}}$ is a facet of $C$. Then $v$ is in one of the simplicial subdivisions
$\ccone{v_{F_1},\cdots, v_{F_{n-1}},v_C}(\deff).$

For an open cone, the proof is similar. The only difference is that the intersection rays $\deff_{> 0}v_1$ and $\deff_{> 0}v_2$ may be in the interior of some lower dimensional faces of $C$. Let $v$ be a positive $\deff$-linear combination of $v_C$ and $v_1$ as in the closed cone case. Then by the induction hypothesis, $v_1$ is in $\ocone{v_{F_1},\cdots, v_{F_r}}$ with $F_{v_1}<\cdots<F_{v_r}$ and $\dim F_{v_r}\leq n-1$. Thus $v$ is in the simplicial subdivisions $\ocone{v_{F_1},\cdots, v_{F_r},v_C}.$
\smallskip

\noindent
(\ref{it:ratsm}) See the second exercise on page 48  of \cite {Fu}.
\end{proof}

\begin{lem}
\begin{enumerate}
\item For a family of closed cones $\{C_{i}\}$ in $\deff^k$, $1\leq i\leq m$,  that span the same linear subspace of $\deff^n$, there is a simplicial subdivision $\{C_{ij}\}$ in $\deff^k$ of $C_i$ such that any two of $C_{ij}$ either coincide or only intersect along their faces.
\mlabel{it:conesub}
\item For a family of rational closed cones $\{C_i\}$, $1\leq i\leq m$, that span the same linear subspace of $\QQ^n$,
there is a smooth subdivision $\{C_{ij}\}$ of $C_i$ such that every two of $C_{ij}$ either coincide or only intersect along their faces.
\mlabel{it:smoothsub}
\end{enumerate}
\mlabel{lem:ConeSubd}
\end{lem}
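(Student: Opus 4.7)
The key idea is to produce first a \emph{common refinement} of all the $C_i$ into a face-to-face fan, and then subdivide each cell of that fan simplicially (resp.\ smoothly) in a way that is coherent across shared faces.

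For item (a), I would start by using item (5) of the list of conical facts: each $C_i$ is an intersection of finitely many half-spaces $H_{u_{i,\ell}}=\{u_{i,\ell}\geq 0\}$. Let $\mathcal{H}$ be the finite collection of the hyperplanes $\{u_{i,\ell}=0\}$, intersected with the common linear span $V$ of the $C_i$. The arrangement $\mathcal{H}$ partitions $V$ into (relatively open) polyhedral cells; the closures $\{D_\alpha\}$ of the top-dimensional cells are closed cones in $\deff^k$ that cover $V$, have dimension $\dim V$, and by construction any two meet only along a common face. Moreover, each $C_i$ is the union of those $D_\alpha$ lying in the correct sign combination with respect to every $u_{i,\ell}$, so each $C_i$ is a union of a subfamily of the $\{D_\alpha\}$.

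Next I would subdivide all the $D_\alpha$ simplicially in a mutually compatible way using the barycentric-type construction from the proof of Proposition \ref{pp:ConeToSim}(\ref{it:ConeToSim}), but applied \emph{globally} to the whole fan $\{D_\alpha\}$. Namely, for every face $F$ of the fan (of any dimension, counting also the common faces shared between different $D_\alpha$), pick once and for all a vector $v_F$ in the relative interior of $F$; then for each $D_\alpha$ form the simplicial cones $\ccone{v_{F_0},\dots,v_{F_d}}$ indexed by chains $F_0<\cdots<F_d=D_\alpha$ of faces of $D_\alpha$. Proposition \ref{pp:ConeToSim}(\ref{it:ConeToSim}) gives that these cones subdivide $D_\alpha$; coherence across adjacent $D_\alpha,D_\beta$ is automatic, since the chains of faces contained in $D_\alpha\cap D_\beta$ are the same whether seen from $D_\alpha$ or $D_\beta$, and the chosen $v_F$ depends only on $F$. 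Restricting this common simplicial refinement to each $C_i$ (which is a union of $D_\alpha$'s) produces the desired $\{C_{ij}\}$.

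For item (b), I would run the same construction but, using that $F(\QQ)$ is dense in $F(\RR)$, choose each $v_F$ to be a rational (equivalently integer) vector in the relative interior of $F$; this yields a common simplicial subdivision into rational cones. Then I would invoke Proposition \ref{pp:ratsm}(\ref{it:ratsm}) to refine each simplicial cone into smooth cones. The only real obstacle is to carry out this last refinement \emph{compatibly across shared faces}, because smoothness is a global lattice condition. The standard way to do this, as in the second exercise on p.~48 of \cite{Fu}, is an iterative \emph{star subdivision}: at each step, pick a primitive lattice vector $v$ in the relative interior of some non-smooth cone $\sigma$ and replace every cone of the fan containing $\sigma$ as a face by its star subdivision through $v$. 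This operation is intrinsic to the fan, hence preserves the face-to-face property, and reduces the multiplicity (the lattice index of the sublattice spanned by the primitive generators) of $\sigma$ until it becomes $1$, i.e.\ smooth. Iterating over all simplicial cones, which are finitely many, yields the required smooth subdivision $\{C_{ij}\}$ of each $C_i$ with the required compatibility, finishing the proof.
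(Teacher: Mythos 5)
Your proposal is correct and follows essentially the same route as the paper: the closed cells of your hyperplane arrangement are exactly the cones $C_\nu$ that the paper builds from sign patterns on the defining half-spaces, after which both arguments pass to the barycentric-type simplicial refinement of Proposition~\ref{pp:ConeToSim}.(\ref{it:ConeToSim}) and, in the rational case, to the smooth resolution from the exercise in \cite{Fu}. You are in fact somewhat more careful than the paper at two points --- fixing the barycentric vectors $v_F$ once per face of the whole fan, and performing the smooth resolution by star subdivisions applied to the entire fan --- which is precisely what is needed to guarantee that the refined pieces coming from different cells still meet face-to-face.
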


\begin{proof}
(\ref{it:conesub}) Each closed cone $C_i$ can be written as an intersection of half hyperplanes $ H_{u^i_j}, j=1,\cdots, s_j$
$$C_i=\bigcap_{j=1}^{s_i} H_{u^i_j}, 1\leq i\leq m.$$

Denote
$$S:=\{(i,j)\ |  1\leq i\leq m, \ 1\leq j\leq s_i\},
$$
and
$$F:=\{\nu: S\to \{1,-1\} \ |\ \exists i_0, \nu(i_0,j)=1, \forall j \}.$$
Each element  $\nu\in F$ defines a set
$$C_\nu:=\bigcap_{i=1}^m \bigcap _{j=1}^{s_i} H_{\nu ((i,j))u^i_j},
$$
which is a cone though it may be trivial.

For each $1\leq i_0\leq m$, consider the set
$$F_{i_0}:=\{\nu\in F\,|\, \nu(i_0,j)=1\text{ for all } j\text{ and } \dim C_\nu=n\}.$$
Then
$ C_{i_0}=\bigcup _{\nu \in F_{i_0}}C_\nu.$
Further for $\nu, \mu \in \cup_{i=1}^m F_i$, we have
$$ C_\mu \cap C_\nu = \bigcap_{(i,j)\in S} H_{\mu(i,j)u^i_j}\cap H_{\nu(i,j)u^i_j}
=\left(\bigcap_{\mu(i,j)=\nu(i,j)} H_{\mu(i,j)u^i_j}\right) \bigcap\left( \bigcap_{\mu(i,j)\neq \nu(i,j)}\left\{u^i_j=0\right\}\right).
$$
We note that the faces of a closed cone $C:=\cap_{\ell=1}^k H_{u_\ell}$ are of the form
$\left(\bigcap_{\ell\in K'} H_{u_\ell} \right)\cap \left( \bigcap_{\ell\in K''} \left\{u_\ell=0\right\} \right)$ for a partition $K'\sqcup K''=[k]$. Thus $C_\mu\cap C_\nu$ is a face of both $C_\nu$ and $C_\mu$.

Therefore, for $1\leq i\leq m$, the set $\{C_\nu\,|\, \nu\in F_i\}$ is a subdivision of $C_i$ and any two $\mu, \nu\in \cap_{i=1}^m F_i$, $C_\mu$ and $C_\nu$ either coincide or only intersect along their faces.
By subdividing each $C_\nu, \nu\in \cup_{i=1}^m F_i$ into simplicial cones applying Proposition \mref {pp:ConeToSim}.(\mref{it:ConeToSim}), we obtain the simplicial subdivisions of $C_i$ in the proposition.

\smallskip

\noindent
(\mref{it:smoothsub}) Taking $\deff=\QQ$ in Item~(\mref{it:conesub}), and applying Proposition~\mref{pp:ConeToSim}.(\ref{it:ratsm}) to further subdivide each $C_{ij}$ there into smooth cones, we achieve the desired subdivision.
\end{proof}

\subsection{Conical zeta values and open subdivision relations}
\mlabel{ss:osub}
We now introduce our main concept of study in this paper.
\begin{defn}
{\rm
Let $C$ be an open cone in $\RR ^k_{\geq 0}$ and let $\vec
s \in \CC ^k$. Define the {\bf conical zeta function} by
\begin{equation}
\zeta(C;\vec s)=\sum_{(n_1,\cdots,n_k)\in C\cap \ZZ^k}
\frac{1}{n_1^{s_1}\cdots n_k^{s_k}},
\mlabel{eq:ocmzv}
\end{equation}
if the sum converges. Here we have used the convention that $0^s=1$ for any $s$.
}
\mlabel{de:cmzv}
\end{defn}

Note that with this convention, the definition of $\zeta(C;\vec{s})$ does not depend on the integer $k$ such that $C\subseteq \RR^k_{\geq 0}$ and $\vec{s}\in \CC^k$. Thus we can use $\zeta(C;\vec{s})$ without referring to $k$. When
$s_1,\cdots,s_k$ are taken to be integers, we call
$\zeta(C;\vec s)$ a {\bf conical zeta value (CZV)}. Sometimes we use the name {\bf open conical zeta value} also.
Let $\ocmzvset$ denote the set of convergent conical zeta values. Define the space $\QQ\ocmzvset$ to be the {\bf space of convergent conical zeta values over $\QQ $.}

\begin {lem} Let $C$ be an open cone in $\RR^k_{\geq 0}$. For $\vec s \in \ZZ ^n$ with $s_i\ge 2$, $\zeta(C;\vec s)$ is convergent.
\end{lem}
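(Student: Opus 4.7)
The plan is to prove convergence by an elementary domination argument: the summand is nonnegative, so the sum either converges absolutely or diverges to $+\infty$, and we only need to bound the partial sums by a convergent series.

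First I would observe that since $C \subseteq \RR^k_{\geq 0}$, every lattice point $(n_1,\ldots,n_k) \in C \cap \ZZ^k$ has nonnegative integer coordinates, so
\[
C \cap \ZZ^k \subseteq \ZZ^k_{\geq 0}.
\]
Because every term $\frac{1}{n_1^{s_1}\cdots n_k^{s_k}}$ is nonnegative (with the convention $1/0^{s} = 1$), the partial sums of $\zeta(C;\vec s)$ are bounded above by the corresponding partial sums of
\[
\sum_{(n_1,\ldots,n_k)\in \ZZ^k_{\geq 0}} \frac{1}{n_1^{s_1}\cdots n_k^{s_k}}.
\]
This larger sum factors as a product of one-variable sums
\[
\prod_{i=1}^{k}\Bigl(\sum_{n_i=0}^{\infty}\frac{1}{n_i^{s_i}}\Bigr)
= \prod_{i=1}^{k}\bigl(1+\zeta(s_i)\bigr),
\]
where each Riemann zeta $\zeta(s_i)$ converges since $s_i\geq 2$. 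Hence the dominating series is finite, and the monotone convergence of sums of nonnegative reals gives absolute convergence of $\zeta(C;\vec s)$.

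There is no real obstacle here beyond bookkeeping of the $0^s = 1$ convention; the only point worth stating carefully is that although $C$ need not have full dimension and lattice points of $C$ may have some coordinates equal to zero, the convention makes each such factor contribute exactly $1$, so the crude bound by the full orthant sum remains valid. The bound is of course extremely wasteful (since $C$ is typically a proper subcone of $\RR^k_{\geq 0}$), but for the mere question of convergence it suffices, and sharper results on the analytic continuation in $\vec s$ are deferred to the promised forthcoming paper on divergent conical zeta values.
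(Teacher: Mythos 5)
Your proof is correct and follows essentially the same route as the paper's: dominate $\zeta(C;\vec s)$ by the sum over the full orthant, which factors into one-variable Riemann zeta sums convergent for $s_i\geq 2$. Your handling of the $0^s=1$ convention (yielding the factor $1+\zeta(s_i)$ rather than $\zeta(s_i)$) is in fact slightly more careful than the paper's own statement of the factorization.
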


\begin{proof} Let $C$ be the first coordinate orthant $\RR^n_{\geq 0}$. Then
$$\zeta (C;\vec s)= \prod_{i=1}^k \zeta(s_i)$$
and hence is convergent if $s_i\geq 2$ for $1\leq i\leq n$.
Then the statement holds for any open cone $C\subseteq \RR^n_{\geq 0}$ since
$\zeta(C;\vec{s})\leq \zeta(\ZZ^n_{\geq 0};\vec{s})$.
\end{proof}

An (open or closed) {\bf Chen cone} of dimension $k$ is a (open or closed) cone $C_{k,\sigma}$ spanned
by the vectors $\{e_{\sigma(1)},e_{\sigma(1)}+e_{\sigma(2)},\cdots,
e_{\sigma(1)}+\cdots + e_{\sigma(k)}\}$ where $\{e_1,\cdots,e_n\}$
is the standard basis of $\ZZ^n$ and $\sigma\in S_n$, $S_n$ is the symmetric group on $\{1,\cdots,n\}$. Let $C_k$
denote the standard (open or closed) Chen cone spanned by $\{e_1,e_1+e_2,\cdots,e_1+\cdots + e_k\}$.

\begin{prop}
For any open Chen cone $C_{k,\sigma}$, $k\geq 1, \sigma\in S_n$, we have
$$ \zeta({C_{k,\sigma}};s_1,\cdots,s_n)=\zeta(s_{\sigma(1)}, \cdots,s_{\sigma(k)}),$$
where the right hand side is the multiple zeta value.
\mlabel{pp:mzv}
\end{prop}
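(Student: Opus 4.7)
The plan is to compute the lattice points of the open Chen cone explicitly and match the resulting sum term by term with the definition of the multiple zeta value given in Eq.~(\mref{eq:mzv}).

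First I would identify the generators $w_j := e_{\sigma(1)} + \cdots + e_{\sigma(j)}$, $1 \le j \le k$, of $C_{k,\sigma}$. The $k \times k$ matrix with columns $w_1, \ldots, w_k$ (after restricting to the coordinates $\sigma(1), \ldots, \sigma(k)$) is upper-triangular with $1$'s on the diagonal, so $\{w_1, \ldots, w_k\}$ is a $\ZZ$-basis of the sublattice $\bigoplus_{j=1}^k \ZZ\, e_{\sigma(j)}$. Hence $C_{k,\sigma}$ is a smooth simplicial cone and the lattice points of the open cone $C_{k,\sigma} \cap \ZZ^n$ are exactly the strictly positive integer combinations $\vec n = \sum_{j=1}^k m_j w_j$ with $m_j \in \ZZ_{\ge 1}$. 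All other coordinates of such an $\vec n$ vanish: $n_i = 0$ for $i \notin \{\sigma(1),\ldots,\sigma(k)\}$.

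Next I would translate the positivity of the $m_j$ into inequalities on coordinates. Setting $n_{\sigma(j)} = m_j + m_{j+1} + \cdots + m_k$, one obtains $n_{\sigma(j)} - n_{\sigma(j+1)} = m_j \ge 1$ (with the convention $n_{\sigma(k+1)} = 0$), which gives the chain
\[
n_{\sigma(1)} > n_{\sigma(2)} > \cdots > n_{\sigma(k)} > 0.
\]
Conversely, any such $\vec n$ recovers a unique tuple $(m_1,\ldots,m_k) \in \ZZ_{\ge 1}^k$, so this is a bijection.

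Finally I would invoke the convention $0^s = 1$ to drop the vanishing coordinates from the product $n_1^{s_1}\cdots n_n^{s_n}$, which then reduces to $n_{\sigma(1)}^{s_{\sigma(1)}}\cdots n_{\sigma(k)}^{s_{\sigma(k)}}$. After the change of summation variables $a_j := n_{\sigma(j)}$, Eq.~(\mref{eq:ocmzv}) becomes
\[
\zeta(C_{k,\sigma};s_1,\ldots,s_n) = \sum_{a_1 > a_2 > \cdots > a_k > 0} \frac{1}{a_1^{s_{\sigma(1)}} \cdots a_k^{s_{\sigma(k)}}},
\]
which is exactly $\zeta(s_{\sigma(1)}, \ldots, s_{\sigma(k)})$ by Eq.~(\mref{eq:mzv}). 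There is no serious obstacle here; the only point requiring care is the bookkeeping between the ambient coordinates indexed by $\{1,\ldots,n\}$ and the generator indices $\{1,\ldots,k\}$, together with the use of the $0^s = 1$ convention to make the ambient dimension $n$ irrelevant.
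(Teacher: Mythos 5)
Your proposal is correct and follows essentially the same route as the paper: both identify the lattice points of the open Chen cone as the strictly positive integer combinations of the generators $e_{\sigma(1)}, e_{\sigma(1)}+e_{\sigma(2)}, \ldots$, change summation variables, and read off the multiple zeta value. Your extra remark that the generators form a $\ZZ$-basis of the sublattice (so that integrality of the point forces integrality of the coefficients) is a useful bit of care that the paper leaves implicit.
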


Therefore the space $\QQ \mzvset $ spanned by MZVs over $\QQ$ is a subspace of $\QQ \ocmzvset$.

\begin{proof}
An element of $C_{k,\sigma}\cap \ZZ^n$ is of the form
\begin{align*} &a_1e_{\sigma(1)}+a_2(e_{\sigma(1)}+e_{\sigma(2)}) +\cdots + a_k(e_{\sigma(1)}+\cdots + e_{\sigma(k)}) \\ =&
(a_1+\cdots+a_k)e_{\sigma(1)} +(a_2+\cdots+a_k)e_{\sigma(2)}+\cdots +a_ke_{\sigma(k)},
\end{align*}
where $a_i\in (0,\infty), 1\leq i\leq k$. Hence
$$\zeta(C_{k,\sigma};s_1,\cdots,s_n) =\sum_{a_1,\cdots,a_k\geq 1} \frac{1}{(a_1+\cdots+a_k)^{s_{\sigma(1)}}\cdots a_k^{s_{\sigma(k)}}} =\zeta(s_{\sigma(1)},\cdots,s_{\sigma(k)}).$$
\end{proof}

From the definition of CZVs, we derive the following lemma.
\begin{lemma}
Let  $\{C_i\}_i$ be a family of open cones that form a subdivision of an open cone $C$, then
\begin{equation}
\zeta(C;\vec{s})=\sum_i \zeta(C_i;\vec{s}).
\mlabel{eq:subd}
\end{equation}
\mlabel{lem:subd}
This is called an {\bf open subdivision relation} of CZVs.
\end{lemma}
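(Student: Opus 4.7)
The plan is to reduce the claimed identity to the set-theoretic fact that the family $\{C_i\}_i$ yields a disjoint partition of the lattice points of $C$:
\[
C\cap\ZZ^k = \bigsqcup_i \bigl(C_i\cap\ZZ^k\bigr).
\]
Once this partition is secured, the identity is immediate: every term $1/(n_1^{s_1}\cdots n_k^{s_k})$ in the series defining $\zeta(C;\vec s)$ is positive, so the convergent sum is absolutely convergent, and rearrangement of a positive-term series gives
\[
\zeta(C;\vec s) = \sum_{\vec n\in C\cap\ZZ^k}\frac{1}{n_1^{s_1}\cdots n_k^{s_k}} = \sum_i\sum_{\vec n\in C_i\cap\ZZ^k}\frac{1}{n_1^{s_1}\cdots n_k^{s_k}} = \sum_i \zeta(C_i;\vec s).
\]

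To establish the partition of $C$, I would return to the definition of a subdivision of an open cone. By that definition, each $C_i$ is the relative interior of some intersection $D_{i_1}\cap\cdots\cap D_{i_t}$, where $\{D_1,\ldots,D_r\}$ is a subdivision of the closure $\overline C$. The standard ``stratification by minimal intersection'' argument then shows that every $p\in C$ lies in the relative interior of a unique such intersection, namely the one indexed by $J(p):=\{j:p\in D_j\}$. That $p$ actually lies in the relative interior (rather than on a lower-dimensional subface) uses the fact that $p$ is a topological interior point of $\overline C$, so that the minimal cone of the subdivision containing $p$ in its relative interior must coincide with $\bigcap_{j\in J(p)} D_j$. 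Disjointness of the strata follows from the facewise-intersection axiom in the definition of subdivision of a closed cone, which guarantees that relative interiors of distinct cones in the subdivision are disjoint.

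The main technical ingredient is therefore this stratification claim, a classical fact about fans that is not recorded explicitly earlier in the paper; I would include a short self-contained verification invoking only the facewise-intersection axiom. Once the partition is in hand, the sum rearrangement concludes the proof in one line.
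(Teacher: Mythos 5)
The paper supplies no proof of this lemma at all --- it is asserted to follow ``from the definition of CZVs'' --- so your route (reduce to a disjoint partition of the lattice points of $C$, then rearrange a positive-term series) is exactly the intended argument, and your identification of the stratification claim, that every $p\in C$ lies in the relative interior of precisely one intersection $\bigcap_{j\in J(p)}D_j$ with $J(p)=\{j: p\in D_j\}$, as the one genuinely nontrivial ingredient is correct. Your sketch of that claim (minimal faces, the facewise-intersection axiom forcing relative interiors of distinct intersections to be disjoint) is sound.

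There is, however, one half of your displayed partition that you do not address and that does not come for free: the equality $C\cap\ZZ^k=\bigsqcup_i(C_i\cap\ZZ^k)$ needs not only $C\subseteq\bigcup_i C_i$ and disjointness (which your stratification gives) but also $\bigcup_i C_i\subseteq C$. Under the paper's definition read literally, the family $\{C_i\}$ consists of the relative interiors of \emph{all} intersections $\bigcap_{j\in S}D_j$, and some of these lie in the boundary of $\close{C}$ rather than in $C$. For example, if $\close{C}=\RR^2_{\geq 0}$ is cut into three closed cones $D_1,D_2,D_3$ in cyclic order, then $D_1\cap D_3=\{0\}$ appears in the list, its relative interior is $\{0\}$ by the stated convention, and the convention $0^s=1$ makes it contribute $1$ to the right-hand side; in $\RR^3_{\geq 0}$ one can likewise produce intersections such as $\ccone{e_1+e_2}$ whose relative interior sits in the facet $\{x_3=0\}$ and contributes a genuine nonzero zeta value. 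So you should either restrict the family to those relative interiors that meet $C$ (clearly the intended reading, consistent with the paper's two-cone example), and then add the short observation that a relatively open piece meeting $C=\mathrm{relint}(\close{C})$ is entirely contained in $C$ --- a nonnegative facet functional of $\close{C}$ vanishing at a relative-interior point of a convex cone vanishes on that whole cone --- or note explicitly that the identity fails without this restriction. With that one addition your proof is complete.
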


We next show that open subdivision relations of open Chen cones recover the quasi-shuffle relations of MZVs.
First recall the quasi-shuffle encoding of MZVs. Define \begin{equation}
\calh^*_0:= \QQ\,1 \oplus\hspace{-.5cm} \bigoplus_{s_1,\cdots,s_k\geq 1, s_1\geq 2, k\geq 1} z_{s_1}\cdots z_{s_k},
\mlabel{eq:qsh}
\end{equation}
with the quasi-shuffle product $\ast$ which is defined recursively but can also be defined by the stuffle product as follows. Define
$$ St_{k,\ell}=\left \{(\varphi,\psi)\,\Big |\, \begin{array}{l}\varphi:[k]\to [m], \psi:[\ell]\to [m] \text{ are order preserving,}\\
\text{ injective and } \im(\varphi)\cup \im(\psi)=[m] \end{array}\right \}$$
and $\varphi^{-1}(i)=0$ when $\varphi^{-1}(i)=\emptyset$.
Then
\begin{equation}
z_{s_1}\cdots z_{s_k} \ast z_{s_{k+1}}\cdots z_{s_{k+\ell}} = \sum_{(\varphi,\psi)\in St_{k,\ell}} z_{s_{\varphi^{-1}(1)}+s_{k+\psi^{-1}(1)}}\cdots z_{s_{\varphi^{-1}(m)}+s_{k+\psi^{-1}(m)}}.
\mlabel{eq:stu}
\end{equation}
Then the quasi-shuffle encoding of MZVs is given by the algebra homomorphism
$$ \zeta^\ast: \calh^*_0 \to \QQ \mzv, \quad z_{s_1}\cdots z_{s_k} \mapsto \zeta(s_1,\cdots,s_k),$$
namely,
$$\zeta(s_1,\cdots,s_k)\zeta(s_{k+1},\cdots,s_{k+\ell}) =\zeta^\ast (z_{s_1}\cdots z_{s_k} \ast z_{s_{k+1}}\cdots z_{s_{k+\ell}}).$$

We likewise give an open cone encoding of CZVs.
\begin{defn}
{\rm
\begin{enumerate}
\item
Let $\doc$ be the set of {\bf decorated open cones} consisting of pairs $(C;\vec{s})$  where $C\subseteq \QQ^k_{\geq 0}$ is an open rational cone and $\vec{s}\in \ZZ^k_{\geq 0}$.
\item
Let $\{C_i\}_i$ be an open subdivision of $C$. Then we also call $\{(C_i;\vec s)\}_i$ an open subdivision of $(C;\vec s)$ and denote it by
$$ (C;\vec{s}) \prec \sum_i (C_i;\vec s).$$
\item
Denote $\doc_0$ for the subset of $\doc$ such that $\zeta(C;\vec{s})$ is convergent.
\item
Define the linear map
\begin{equation}
\zeta^o: \QQ\doc_0 \to \QQ\ocmzvset, \quad (C;\vec{s})\mapsto \zeta(C;\vec{s}).
\mlabel{eq:zetao}
\end{equation}
\item
Let $\doch$ (resp. $\doch_0$) denote the subset of $\doc$ of decorated cones  $(C, \vec s)$ with underlying cone $C$ an open Chen cone (resp. that give convergent CZVs).
\end{enumerate}
}
\mlabel{de:doc}
\end{defn}
By definition, $\zeta^o(C;\vec{s})=\zeta(C;\vec{s})$. Thus the two notations will be used interchangably; the notation $\zeta^o(C;\vec{s})$ will be used to stress the map $\zeta^o$.

Then we have the bijection
$$ \calh^\ast_0\to \QQ\doch_0, z_{s_1}\cdots z_{s_k} \mapsto (\ocone{e_1,\cdots, e_1+\cdots+e_k};s_1,\cdots,s_k),$$
completing the following commutative diagram of linear maps.

\begin{equation}
\xymatrix{\calh^\ast_0 \ar@{>->>}[r] \ar[d]^{\zeta^\ast}& \QQ\doch_0 \ar@{^{(}->}[r] & \QQ\doc_o \ar[d]^{\zeta^o} \\
\QQ\mzvset \ar@{^{(}->}[rr] && \QQ\ocmzv
}
\mlabel{eq:osub}
\end{equation}
In this context, we obtain the following
\begin{theorem}
\begin{enumerate}
\item
The quasi-shuffle product in Eq.~$($\mref{eq:stu}$)$ corresponds to the open subdivision
\begin{eqnarray}
&&\ocone{e_1,\cdots,e_1+\cdots+e_k, e_{k+1},\cdots,e_{k+1}+\cdots+e_{k+\ell}} \mlabel{eq:chsubd}\\ &=&\coprod_{(\varphi,\psi)\in St_{k,\ell}} \ocone{e_{\varphi^{-1}(1)}+e_{k+\psi^{-1}(1)}, \cdots, e_{\varphi^{-1}(1)}+e_{k+\psi^{-1}(1)}+\cdots +e_{\varphi^{-1}(m)}+e_{k+\psi^{-1}(m)}}.
\notag
\end{eqnarray}
in the sense that the composition in the top of Eq.~$($\mref{eq:osub}$)$ sends the right hand side of Eq.~$($\mref{eq:stu}$)$ to the right hand side of Eq.~$($\mref{eq:chsubd}$)$.
\item
The quasi-shuffle product of MZVs:
$$\zeta(s_1,\cdots,s_k)\zeta(s_{k+1},\cdots,s_{k+\ell}) =\zeta^*(z_{s_1}\cdots z_{s_k} \ast z_{s_{k+1}}\cdots z_{s_{k+\ell}})$$
coincides with the subdivision of CZVs:
\begin{eqnarray*}
&&\zeta^o(\ocone{e_1,\cdots,e_1+\cdots+e_k, e_{k+1},\cdots,e_{k+1}+\cdots+e_{k+\ell}}
;s_1,\cdots,s_k, s_{k+1},\cdots,s_{k+\ell})\\ &=&\sum_{(\varphi,\psi)\in St_{k,\ell}}
\zeta^o(\ocone{e_{\varphi^{-1}(1)}+e_{k+\psi^{-1}(1)}, \cdots, e_{\varphi^{-1}(1)}+e_{k+\psi^{-1}(1)}+\cdots +e_{\varphi^{-1}(m)}+e_{k+\psi^{-1}(k)}};s_1,\cdots,s_m).
\end{eqnarray*}
\end{enumerate}
\mlabel{thm:stsub}
\end{theorem}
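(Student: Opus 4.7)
The plan is to reduce both parts to a single combinatorial-geometric description of integer points in the cones appearing in Eq.~(\mref{eq:chsubd}), then to derive the subdivision and the CZV identity from it.

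First I describe the lattice points in each cone. Following the coordinate computation of Proposition \mref{pp:mzv}, the integer points of $C:=\ocone{e_1,\ldots,e_1+\cdots+e_k,e_{k+1},\ldots,e_{k+1}+\cdots+e_{k+\ell}}$ are precisely the $(n_1,\ldots,n_{k+\ell})\in\ZZ^{k+\ell}$ satisfying $n_1>\cdots>n_k>0$ and $n_{k+1}>\cdots>n_{k+\ell}>0$, the two blocks being independent since they involve disjoint coordinates. For a fixed stuffle $(\varphi,\psi)\in St_{k,\ell}$ with common image $[m]$, I expand the spanning vectors $v_j:=\sum_{i=1}^j(e_{\varphi^{-1}(i)}+e_{k+\psi^{-1}(i)})$ (using $e_0:=0$); then the change of variable $b_i=a_i+\cdots+a_m$ applied to a generic point $\sum_{j=1}^m a_j v_j$ with $a_j>0$ identifies the integer points of $C_{\varphi,\psi}:=\ocone{v_1,\ldots,v_m}$ with those $(n_1,\ldots,n_{k+\ell})\in\ZZ^{k+\ell}$ for which there exist $b_1>\cdots>b_m>0$ with $n_j=b_{\varphi(j)}$ for $j\in[k]$ and $n_{k+j}=b_{\psi(j)}$ for $j\in[\ell]$.

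Next I establish the geometric subdivision. Any $(n_1,\ldots,n_{k+\ell})\in C$ determines the set $\{n_1,\ldots,n_k\}\cup\{n_{k+1},\ldots,n_{k+\ell}\}$; listing its distinct elements in strictly decreasing order as $b_1>\cdots>b_m>0$ and defining $\varphi(j),\psi(j)\in[m]$ by $n_j=b_{\varphi(j)}$, $n_{k+j}=b_{\psi(j)}$, yields a unique stuffle $(\varphi,\psi)\in St_{k,\ell}$ with the point sitting in $C_{\varphi,\psi}$. This shows that $C$ is the disjoint union of the $C_{\varphi,\psi}$ as sets. To upgrade to a subdivision in the sense of the definition, I pass to closures: each $\overline{C_{\varphi,\psi}}$ is simplicial of dimension $m$ (linear independence of the $v_j$'s follows from recovering the $b_j$ as coordinate entries), the $\overline{C_{\varphi,\psi}}$ cover $\overline C$, and an intersection $\overline{C_{\varphi,\psi}}\cap\overline{C_{\varphi',\psi'}}$ is cut out by further equality relations among the $b_i$'s (or by $b_m=0$), hence is a common face of both, obtained by retaining an appropriate subset of the spanning vectors. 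Part (1) then follows because, under the composition $\calh^\ast_0\to\QQ\doch_0\hookrightarrow\QQ\doc^o$, the stuffle term $z_{t_1}\cdots z_{t_m}$ with $t_i=s_{\varphi^{-1}(i)}+s_{k+\psi^{-1}(i)}$ is carried to the decorated Chen cone corresponding term-by-term to the $(\varphi,\psi)$-summand on the right-hand side of (\mref{eq:chsubd}).

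For Part (2) I invoke Lemma \mref{lem:subd} on the subdivision just proved and evaluate both sides. On the left, the integer points of $C$ factor as a product of two independent strictly decreasing tuples, so
\[
\zeta^o(C;s_1,\ldots,s_{k+\ell})=\zeta(s_1,\ldots,s_k)\,\zeta(s_{k+1},\ldots,s_{k+\ell}).
\]
On the right, the parametrization of integer points of $C_{\varphi,\psi}$ by $b_1>\cdots>b_m>0$ groups the exponents mapping to the same $b_i$, yielding
\[
\zeta^o(C_{\varphi,\psi};s_1,\ldots,s_{k+\ell})=\zeta(t_1,\ldots,t_m),\qquad t_i=s_{\varphi^{-1}(i)}+s_{k+\psi^{-1}(i)},
\]
with the convention $s_0=0$. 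Summing over $(\varphi,\psi)\in St_{k,\ell}$ recovers the $\zeta^\ast$-image of the right-hand side of (\mref{eq:stu}).

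The main obstacle is combinatorial bookkeeping: keeping consistent conventions $e_0=s_0=0$ and $\varphi^{-1}(i)=0$ when $i\notin\im(\varphi)$, and verifying the ``intersect along faces'' condition for the closed cones, which amounts to identifying each pairwise intersection with the closure of the cone determined by the common coarsening of the two equality patterns. Once coordinates are unraveled, the core identity is transparent.
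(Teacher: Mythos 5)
Your proof is correct and follows exactly the route the paper intends: the paper states Theorem~\ref{thm:stsub} without a written proof, relying on the coordinate computation of Proposition~\ref{pp:mzv}, the stuffle decomposition of the product of two open Chen cones, and Lemma~\ref{lem:subd}, which is precisely what you carry out. The only point to polish is that, under the paper's Definition of an open subdivision, the closed subdivision of $\close{C}$ should consist only of the top-dimensional shuffle cones, with the lower-dimensional stuffle cones $C_{\varphi,\psi}$ ($m<k+\ell$) arising as relative interiors of their intersections rather than as members of the closed subdivision itself; your equality-pattern description already contains this, so it is a matter of phrasing, not substance.
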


For example, the closed cone subdivision
$$ \ccone{e_1,e_1}=\ccone{e_1,e_1+e_2}\cup \ccone{e_2,e_1+e_2}$$
by Chen cones gives the open cone subdivision
$$(\ocone{e_1,e_2};s_1,s_2)=(\ocone{e_1, e_1+e_2};s_1,s_2)\sqcup(\ocone{e_2, e_1+e_2};s_1,s_2)\sqcup (\ocone{e_1+e_2};s_1,s_2)$$
which recovers the quasi-shuffle relation
$$ z_{s_1}\ast z_{s_2}=z_{s_1}z_{s_2}+z_{s_2}z_{s_1}+z_{s_1+s_2}.$$
Indeed we have
\begin{eqnarray*}
\zeta(s_1)\zeta(s_2)&=&\zeta^o(\ocone{e_1,e_2},s_1,s_2)\\ &=&\zeta^o(\ocone{e_1, e_1+e_2},s_1,s_2)+\zeta^o(\ocone{e_2, e_1+e_2},s_1,s_2)+\zeta^o(\ocone{e_1+e_2},s_1,s_2)\\
&=&\zeta (s_1,s_2)+\zeta(s_2,s_1)+\zeta(s_1+s_2).
\end{eqnarray*}

\section {Closed cones and simple fractions}
\mlabel{sec:ccone}
As noted in the introduction, we will establish a class of relations of CZVs that generalizes the shuffle relation of MZVs. Motivated by the approach of multiple zeta fractions outlined in the introduction, we first relate CZVs to a class of fractions and generalize the shuffle relation of multiple zeta fractions to this class of fractions. Our geometric approach of generalizing the shuffle relation consists in encoding all linear relations of these CZV fractions as subdivision relations of closed cones from the CZVs. Thus we now make a digression of our discussion of CZVs to relate closed cones with a family of rational functions, which we call simple fractions. Under this correspondence we show that linear relations among  simple  fractions have a natural geometric interpretation as subdivisions of closed cones.
This correspondence will be generalized to decorated cones in the next section.

\subsection {From closed cones to simple fractions}
\mlabel{subsec:sfrac}

\begin {defn} {\rm
Let $\deff$ be a subfield of $\RR$. Let $z_i, i\geq 1$ be a countable set of variables and let $\vec{z}=(z_i)_{i\geq 1}$. A {\bf simple fraction} with coefficients in $\deff$ is a fraction of the form $ \frac {1}{L_1\cdots L_k}$, where $L_1, \cdots, L_k \in \deff [\vec{z}]$ are linearly independent linear functions.
Let $\cals (\deff)$ be the $\deff$-linear subspace of the quotient field $\deff(\vec{z})$ of $\deff [\vec{z}]$ generated by simple fractions with coefficients in  $\deff$.
}
\end{defn}

Let $\deff\cc(\deff)$ denote the $\deff$-vector space spanned by $\cc(\deff)$.
We will define a natural map from $\deff\cc(\deff)$ to $\cals(\deff)$. Lawrence in \mcite {La} (see also \mcite {GP}) constructed a similar map based on the valuation property.
Our map generalizes Lawrence's map in so far as it takes non-zero values for lower dimensional cones in large dimensional spaces.

Let $C$ be a closed simplicial cone in $\deff^n_{\geq 0}$ with linearly independent generators $v_1, \cdots v_k$. Let $e_i, 1\leq i\leq n$, be the standard basis of $\deff^n$. For $1\leq i\leq k$, let $v_i=\sum\limits_{j=1}^n a_{ji}e_j, a_{ji}\in \deff$. Define linear functions
$L_i=L_{v_i}=\sum\limits_{j=1}^n a_{ji}z_j$ and let $A_C=[a_{ij}]$ denote the associated matrix in $M_{n\times k}(\deff)$ with $v_i$ as column vectors.
Let $w(v_1, \cdots,
v_k)$ or $w(C)$ denote the sum of absolute values of the determinants of all minors of $A_C$ of rank $k$. Then define
\begin{equation}
\Phi_n(C):=\frac
{w(v_1, \cdots , v_k)}{L_1\cdots L_k}.
\mlabel{eq:phi0}
\end{equation}
This defines a map
$\Phi_n$ from the set $\cc(\deff)$ of closed simplicial cones in $\deff^n_{\geq 0}$ to $\cals (\deff)$.

\begin{lem}
\begin{enumerate}
\item
Let $C=\ccone{v_1,\cdots,v_n}$ be a closed cone of rank $n$ in $\deff^n$, then
$$\Phi_n (C)=(-1)^n\int \cdots \int _{C(\RR)} \exp(x_1z_1+\cdots +x_nz_n)dx_1\cdots dx_n,
$$
where $C(\RR)$ is the $\RR_{\geq 0}$-linear span of $C$ and $\vec z$ is any element in $$\check{C}^-:=\{\vec y\ |\ (\vec y,c)<0, \forall c\in C(\RR)\}.$$
\mlabel{it:phiint}
\item
Let $C$ be a
closed simplicial cone in $\deff^k$ and let $\{C_1,\cdots, C_r \}$ be a closed subdivision of $C$ into closed simplicial cones $C_1,\cdots,C_r$ in $\deff^k$. Then $\Phi_n (C)=\sum\limits_{i=1}^r \Phi_n (C_i)$.
\mlabel{it:phidiv}
\end{enumerate}
\mlabel{lem:phi}
\end {lem}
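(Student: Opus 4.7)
For part (1), the plan is a direct integral computation via the linear change of variables $\vec{x} = A_C \vec{t}$, which maps $\RR^n_{\geq 0}$ bijectively onto $C(\RR)$ with Jacobian $|\det A_C|$; since $C$ has rank $n$ this equals $w(C)$. The exponent transforms as $(\vec{x},\vec{z}) = (\vec{t}, A_C^T \vec{z}) = \sum_{i=1}^n t_i L_i(\vec{z})$, and for $\vec{z} \in \check{C}^-$ each $L_i(\vec{z})$ is strictly negative, so the integral decouples into $n$ one-dimensional exponential integrals, each contributing $-1/L_i(\vec{z})$. Multiplying by $(-1)^n$ yields $w(C)/(L_1 \cdots L_n) = \Phi_n(C)$, as claimed.

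For part (2) in the full-rank case $\dim C = n$, additivity of $\Phi_n$ is an immediate consequence of (1). The intersections $C_i(\RR) \cap C_j(\RR)$ are Lebesgue-null since they lie in lower-dimensional faces, and the inclusion $\check{C}^- \subseteq \bigcap_i \check{C}_i^-$ (which holds since $C_i(\RR) \subseteq C(\RR)$) supplies a common nonempty domain on which the integral representation is valid. Additivity of Lebesgue integration then translates directly into the identity $\Phi_n(C) = \sum_i \Phi_n(C_i)$ as rational functions.

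For the lower-rank case $\dim C = d < n$, all subdivision cones lie in the $d$-dimensional linear span $V$ of $C$, and the strategy is to reduce to the full-rank case in $\deff^d$. I would choose a basis $u_1,\ldots,u_d$ of $V$ assembled as the columns of a matrix $M \in M_{n\times d}(\deff)$. Each generator factors as $v = Mv'$, giving $A_C = M A_{C'}$, with $A_{C'}$ the matrix of $C$ in the basis $u$. By the Cauchy--Binet identity, every $d\times d$ minor of $A_C$ with row set $I$ factors as $(\det M_I)(\det A_{C'})$, so
$$w(C) = \alpha(M)\,|\det A_{C'}|, \qquad \alpha(M) := \sum_{|I|=d}|\det M_I|,$$
a strictly positive constant depending only on $V$ and the chosen basis. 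Similarly $L_v(\vec{z}) = L_{v'}(M^T\vec{z})$, so setting $\vec{w} := M^T \vec{z}$ gives $\Phi_n(C)(\vec{z}) = \alpha(M)\,\Phi_d(C')(\vec{w})$ and analogously for each $C_j$. The desired additivity then reduces to the full-rank identity $\Phi_d(C') = \sum_j \Phi_d(C_j')$ in $\deff^d$, already established; since this is an identity of rational functions in $\vec{w}$, the substitution $\vec{w} = M^T\vec{z}$ recovers the identity in $\vec{z}$.

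The main obstacle is the lower-rank case of part (2), since the integral representation of part (1) requires full rank. The key technical input is the Cauchy--Binet factorization of $w(C)$ together with the compatible factorization $L_v(\vec{z}) = L_{v'}(M^T\vec{z})$, which together show that $\Phi_n$ of a $d$-dimensional simplicial cone in $\deff^n$ is, up to the common basis-dependent scalar $\alpha(M)$, nothing but $\Phi_d$ of the representation of that cone in the chosen basis of $V$. Once this reduction is in place, the subdivision identity transfers cleanly from $\deff^d$ to $\deff^n$.
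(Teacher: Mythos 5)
Your proof is correct. Part (1) and the full-rank case of part (2) follow exactly the paper's own argument: the change of variables $\vec{x}=A_C\vec{t}$ with Jacobian $|\det A_C|=w(C)$, the decoupling of the exponential integral over $\check{C}^-$, and additivity of the Lebesgue integral over a subdivision whose pieces overlap in measure zero. Where you genuinely diverge is the lower-rank case of (2). The paper goes \emph{up}: it extends a generating set $\{v_1,\dots,v_k\}$ of $C$ to a basis of $\deff^n$, forms the full-rank cones $\overline{C}=\ccone{C,v_{k+1},\dots,v_n}$ and $\overline{C_i}$, applies the full-rank identity to $\Phi_n(\overline{C})=\sum_i\Phi_n(\overline{C_i})$, and then cancels the common factors $L_{v_{k+1}}\cdots L_{v_n}$ using $\det(A_{\overline{C_i}})=\det(A_{\overline{C}})\det(M^{(i)})$ and $w(C_i)=w(C)|\det(M^{(i)})|$. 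You go \emph{down}: choosing a basis of the span $V$, you factor $A_C=MA_{C'}$ and show $\Phi_n(C)(\vec z)=\alpha(M)\,\Phi_d(C')(M^T\vec z)$ with $\alpha(M)=\sum_{|I|=d}|\det M_I|>0$ common to all cones spanning $V$, reducing to the full-rank identity in $\deff^d$ and pulling back along the surjection $M^T$. Both reductions hinge on the same minor factorization (your ``Cauchy--Binet'' step is really just $\det((A_C)_IM^{(i)})=\det((A_C)_I)\det(M^{(i)})$, which is also the engine of the paper's cancellation), so neither is harder than the other; your version has the mild advantage of isolating the basis-independent content of $\Phi_n$ on lower-dimensional cones in a single clean scaling identity, while the paper's avoids having to remark that $\Phi_d$ makes sense for strongly convex cones not necessarily lying in the first orthant (which your image cone $C'$ need not do, though nothing in the formula or the integral argument actually requires it).
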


\begin{proof}
(\mref{it:phiint}) Since $C$ is strongly convex, $\check{C}^-$ is a cone of rank $n$.
With our notation, we have
$$ (v_1,\cdots,v_n)=(e_1,\cdots,e_n)A,\quad (L_1,\cdots,L_n)=(z_1,\cdots,z_n)A.
$$
Any point $\sum x_ie_i$ in $C$ can be uniquely expressed as $\sum y_iv_i$ through a change of variables
$$(x_1,\cdots,x_n)^T = A (y_1,\cdots,y_n)^T.$$
Thus we obtain
\begin{eqnarray*}
&&
\int\cdots \int_{C(\RR)} \exp(x_1z_1+\cdots+x_nz_n) d x_1\cdots d x_n\\
&=& \int_0^{+\infty} \cdots \int_0^{+\infty} \exp\big( (z_1,\cdots,z_1) A (y_1,\cdots,y_n)^T\big) |\det(A)|\, dy_1\cdots dy_n\\
&=& \int_0^{+\infty} \cdots \int_0^{+\infty}
\exp\big((L_1,\cdots,L_n) (y_1,\cdots,y_n)^T\big) |\det(A)|\, dy_1\cdots dy_n\\
&=& |\det(A)| \prod_{i=1}^n \int_0^\infty e^{L_i y_i} dy_i\\
&=& (-1)^n\frac{|\det(A)|}{L_1\cdots L_n}
\end{eqnarray*}
where $L_i=(v_i,\vec z)<0$ for given $\vec{z}\in \check{C}^-$, $1\leq i\leq n$.
\medskip

\noindent
(\mref{it:phidiv}) For a closed cone $C$ of rank $n$ in $\deff ^n$, let $\vec{z}\in \check{C}^-$ be as given in Item~(\mref{it:phiint}). Then the proof follows from Item~(\mref{it:phiint}):
\begin{eqnarray*}
 \Phi_n(C)&=&(-1)^n\int \cdots \int _C e^{x_1z_1+\cdots +x_nz_n}dx_1\cdots dx_n\\
&=&(-1)^n\sum_{i=1}^r \int \cdots \int _{C_i} e^{x_1z_1+\cdots +x_nz_n}dx_1\cdots dx_n\\
&=& \sum_{i=1}^r \Phi(C_i).
\end{eqnarray*}

In general, we can extend a minimal generating set $\{v_1, v_2, \cdots, v_k\}$ of a cone $C$ of rank $k$ in $\deff ^n$  to a basis $\{v_1, \cdots, v_k, v_{k+1}, \cdots, v_n\}$ of $\deff^n$. For a cone $D$ in the linear space $\deff v_1\oplus \cdots \oplus \deff v_k$, let $\overline{D}$ denote the cone in $\deff^n$ generated by $D$ and $v_{k+1}, \cdots, v_n$. Then clearly $\{\overline C_i\}$ is a simplicial subdivision in $\deff^n$ of $\overline C$.

Let $C_i$ be generated by $w^{(i)}_1, \cdots , w^{(i)}_k$ with $w^{(i)}_j=\sum_{\ell=1}^n b^{(i)}_{\ell j}v_\ell$ and let $M^{(i)}=(b^{(i)}_{\ell j})$. Then
$$A_{\overline C}=[A_C\,|B],\quad A_{\overline C_i}=[A_{C_i}|B],
$$
where $(v_{k+1},\cdots,k_n)=(e_1,\cdots,e_n)B$, and
$$A_{C_i}=A_{C}M^{(i)}, \quad
A_{\overline C_i}=A_{\overline C}\left [\begin {array}{cc}M^{(i)}&0\\0&I\end{array}\right ].
$$
Since the cone $\overline C$ has rank $n$, we have $\Phi_n(\overline C)=\sum\limits_{i=1}^r \Phi_n(\overline C_i)$. That is,
$$\frac {|\det(A_{\overline C})|}{L_{v_1}\cdots L_{v_k}L_{v_{k+1}}\cdots L_{v_n}}=\sum_{i=1}^r \frac {|\det(A_{\overline C_i})|}{L_{w^{(i)}_1}\cdots L_{w^{(i)}_k}L_{v_{k+1}}\cdots L_{v_n}}.
$$
Since $\det(A_{\overline C_i})=\det(A_{\overline C})\det(M^{(i)})$ and $w(C_i)=w(C)|\det(M^{(i)})|$, we reach the conclusion
$$\frac {w(C)}{L_{v_1}\cdots L_{v_k}}=\sum_{i=1}^r \frac {w(C_i)}{L_{w^{(i)}_1}\cdots L_{w^{(i)}_k}}.
$$
\end{proof}

Now let $C$ be a closed cone in $\deff^n$ and let $C=\{C_1,\cdots,C_r\}$ be a subdivision of $C$ into closed simplicial cones $C_1,\cdots,C_r$ in $\deff^n$. Define
\begin{equation} \Phi_n(C):=\sum_{i=1}^r \Phi_n(C_i).
\mlabel{eq:cphi}
\end{equation}
The value $\Phi_n(C)$ is well-defined because of the following lemma.
\begin{lemma}
For a closed cone $C$ in $\deff^n$, the value $\Phi_n(C)$ does not depend on the choice of the subdivision $C=\{C_1,\cdots,C_r\}$ of $C$ into closed simplicial cones in $\deff^n$.
\mlabel{lem:cphi}
\end{lemma}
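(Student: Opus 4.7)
The plan is to reduce the well-definedness to the additivity of $\Phi_n$ on simplicial refinements of a simplicial cone, established in Lemma~\mref{lem:phi}.(\mref{it:phidiv}), via a common refinement argument. Let $\mathcal{A}=\{A_1,\dots,A_r\}$ and $\mathcal{B}=\{B_1,\dots,B_s\}$ be two subdivisions of $C$ into closed simplicial cones in $\deff^n$. Each $A_i$ and each $B_j$ has the same dimension as $C$, so all of these cones span the same linear subspace of $\deff^n$, namely the linear span of $C$. Consequently, Lemma~\mref{lem:ConeSubd}.(\mref{it:conesub}) applied to the combined family $\mathcal{A}\cup\mathcal{B}$ produces a single global collection of simplicial cones $\mathcal{D}$ such that, for each $A_i$, the sub-collection $\{D\in\mathcal{D}\,|\,D\subseteq A_i\}$ is a simplicial subdivision of $A_i$, and likewise for each $B_j$.

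Applying Lemma~\mref{lem:phi}.(\mref{it:phidiv}) to each simplicial $A_i$ (respectively $B_j$) with this induced simplicial subdivision gives
$$\Phi_n(A_i)=\sum_{D\in\mathcal{D},\,D\subseteq A_i}\Phi_n(D),\qquad \Phi_n(B_j)=\sum_{D\in\mathcal{D},\,D\subseteq B_j}\Phi_n(D).$$
Summing over $i$ (respectively $j$) and using that every $D\in\mathcal{D}$ has the same dimension as $C$, so that its relative interior is contained in the relative interior of a unique $A_i$ (and of a unique $B_j$), we obtain
$$\sum_{i=1}^{r}\Phi_n(A_i)=\sum_{D\in\mathcal{D}}\Phi_n(D)=\sum_{j=1}^{s}\Phi_n(B_j),$$
which is the required equality.

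The one delicate point is checking that Lemma~\mref{lem:ConeSubd}.(\mref{it:conesub}) really supplies a genuine common refinement, i.e.\ that the simplicial subdivisions of the $A_i$ and of the $B_j$ are carved from one and the same collection $\mathcal{D}$, and not two unrelated ones. This is built into the lemma's construction: its cones arise as sign-pattern intersections of a single list of half-space defining inequalities for every member of $\mathcal{A}\cup\mathcal{B}$ at once, and a top-dimensional sign-pattern cone contained in $A_i$ (equivalently, with the correct sign on each defining inequality of $A_i$) appears in the subdivision of $A_i$, and similarly for $B_j$. Once this observation is recorded, the double counting above is routine; together with Proposition~\mref{pp:ConeToSim}.(\mref{it:ConeToSim}) being invoked uniformly to render each common cell simplicial, this finishes the proof.
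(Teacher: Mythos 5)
Your proof is correct and follows essentially the same route as the paper's: pass to a common refinement of the two subdivisions via Lemma~\ref{lem:ConeSubd}.(\ref{it:conesub}) and then invoke the additivity of $\Phi_n$ under simplicial subdivision from Lemma~\ref{lem:phi}.(\ref{it:phidiv}). You merely spell out the double-counting and the ``genuinely common'' nature of the refinement more explicitly than the paper does, which is a reasonable elaboration rather than a different argument.
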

\begin{proof}
Suppose $C=\{C'_1,\cdots,C'_{r'}\}$ is another subdivision of $C$ into closed simplicial cones in $\deff^n$. Let $C=\{C''_1,\cdots,C''_{r''}\}$ be a common refinement of the two subdivisions, which exists by Lemma~\mref{lem:ConeSubd}.(\mref{it:conesub}). Then by Lemma~\mref{lem:phi}.(\mref{it:phidiv}), we obtain
$$ \sum_{i=1}^r \Phi_n(C_i) = \sum_{k=1}^{r''} \Phi_n(C''_k) = \sum_{j=1}^{r'} \Phi_n(C'_j),$$
as needed.
\end{proof}

Thus we can extend the map $\Phi_n$ defined in Eq.~(\mref{eq:phi0}) on the set of closed simplicial cones to a linear map
\begin{equation}\Phi_n : \deff \cc_n(\deff) \to \cals(\deff), \quad
\Phi_n (C)=\sum_{i=1}^r \Phi_n (C_i),
\mlabel{eq:phin}
\end{equation}
where $\{C_i\}$ is taken to be any simplicial subdivision of $C$.
The linear maps $\Phi_n$ on $\deff\cc_n(\deff)$, $n\geq 1$, are compatible with the direct system $\{\deff\cc_n(\deff)\}_{n\geq 1}$ and can therefore be put together to build a linear map
\begin{equation}
\Phi: \deff \cc(\deff): =\deff (\cup_{n=1}^\infty \cc_n(\deff)) \to \cals(\deff).
\mlabel{eq:phi}
\end{equation}

Let $W_{\mathcal{C}}$ denote the subspace of $\deff\cc(\deff)$ generated by the following two types of elements:
\begin{enumerate}
\item
closed cones containing a linear $\deff$-subspace, and
\item
linear combinations $C-\sum_{i=1}^r C_i$ where $\{C_i\}$ is a subdivision in $\cc(\deff)$ of a closed cone $C\in \cc(\deff)$.
\end{enumerate}

\begin {prop}
\begin{enumerate}
\item
If a closed cone $C$ in $
\deff^n$ contains a line $\ell$, then there exist a
subspace $L$ of $C$ and a strongly convex closed cone $C'$ in $\lin^{\perp}
(L;\lin(C))$ that give the direct sum $C=L\dotplus C'$. Here $\lin^{\perp} (L;\lin(C))$ is the orthogonal complement of $L$ in $\lin(C)$, both taken as $\deff$-vector spaces.
\mlabel{it:line}
\item
We have $W_{\mathcal{C}}\subseteq \ker \Phi$.
\mlabel{it:phis}
\end{enumerate}
\mlabel{pp:phis}
\end{prop}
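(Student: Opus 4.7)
For part~(\mref{it:line}), I would take $L := C\cap (-C)$, the lineality space of $C$. Because $C$ is closed under addition and under scaling by $\deff_{\geq 0}$, the set $L$ is automatically a linear $\deff$-subspace, and is in fact the maximal such subspace contained in $C$. Setting $C' := C\cap \lin^\perp(L;\lin(C))$, I claim that $C = L\dotplus C'$. Indeed, for any $x\in C$, decompose $x = \pi_L(x) + \pi_{L^\perp}(x)$ via orthogonal projection inside $\lin(C)$. Since $L = -L\subseteq C$, both $\pm\pi_L(x)$ lie in $C$, so $\pi_{L^\perp}(x) = x + (-\pi_L(x))$ lies in $C$ by closure under sums, and hence in $C\cap \lin^\perp(L;\lin(C)) = C'$. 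The sum is direct since $L\cap \lin^\perp(L;\lin(C)) = \{0\}$, and $C'$ is strongly convex because any line it contained would lie in $C$ and thus in the zero intersection.

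For part~(\mref{it:phis}), the subdivision-type generators $C - \sum_{i=1}^r C_i$ lie in $\ker\Phi$ immediately from Lemma~\mref{lem:cphi} together with the additive extension in Eq.~(\mref{eq:cphi}). The substantive content is to show that $\Phi(C)=0$ whenever $C$ contains a line. Using part~(\mref{it:line}), write $C = L\dotplus C'$ with $m:=\dim L \geq 1$; fix a basis $u_1,\ldots,u_m$ of $L$ and, for each sign vector $\epsilon\in\{\pm 1\}^m$, form the simplicial cone $L_\epsilon := \ccone{\epsilon_1 u_1,\ldots,\epsilon_m u_m}$. These $2^m$ orthants tile $L$ along coordinate faces. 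Applying Proposition~\mref{pp:ConeToSim}.(\mref{it:ConeToSim}) to decompose $C'$ into strongly convex simplicial cones $\{C'_j\}$, the collection $\{L_\epsilon + C'_j\}_{\epsilon,j}$ is then a simplicial subdivision of $C$: the orthogonality $L\perp \lin(C')$ makes the concatenated generating sets linearly independent, and pairwise intersections are faces because the face-intersection condition holds in the $L$-factor and the $C'$-factor separately.

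To conclude, I invoke Eq.~(\mref{eq:phi0}): the weight $w(L_\epsilon + C'_j)$ is independent of $\epsilon$ since flipping signs of columns does not alter the absolute values of a matrix's minors, while the denominator factors as $L_{\epsilon_1 u_1}\cdots L_{\epsilon_m u_m}\, L_{v^{(j)}_1}\cdots L_{v^{(j)}_{k-m}} = (\epsilon_1\cdots\epsilon_m)\,L_{u_1}\cdots L_{u_m}\,L_{v^{(j)}_1}\cdots L_{v^{(j)}_{k-m}}$. Summing over $\epsilon\in\{\pm 1\}^m$ introduces the factor $\sum_\epsilon \epsilon_1\cdots\epsilon_m$, which vanishes as soon as $m\geq 1$, so each $j$-slice contributes zero and $\Phi_n(C)=0$. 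The principal obstacle I anticipate is the bookkeeping needed to verify rigorously that $\{L_\epsilon + C'_j\}$ is a simplicial subdivision in the technical sense of the definition; this reduces, via the orthogonality of $L$ and $\lin(C')$, to the corresponding assertions separately for $\{L_\epsilon\}$ in $L$ and $\{C'_j\}$ in $\lin(C')$, which are clear by construction.
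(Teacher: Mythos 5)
Your proof is correct. For part~(\mref{it:line}) you take a genuinely different---and in fact cleaner---route than the paper: you define $L$ intrinsically as the lineality space $C\cap(-C)$ and obtain the splitting $C=L\dotplus\bigl(C\cap\lin^{\perp}(L;\lin(C))\bigr)$ in one stroke from closure of $C$ under addition, whereas the paper splits into two cases according to whether the given line lies in a proper face of $C$ (projecting onto $\lin^{\perp}$ in the first case, and showing that $C$ is itself a linear subspace in the second). Your argument avoids that case analysis and makes the maximality of $L$, hence the strong convexity of $C'$, immediate. For part~(\mref{it:phis}) your argument is essentially the paper's: decompose $C=L\dotplus C'$, tile $L$ by the $2^m$ sign-orthants, simplicially subdivide $C'$, and cancel signs. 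The paper treats three cases separately (a one-dimensional subspace, a general subspace, a cone containing a line) and cancels by pairing $\epsilon_1\mapsto-\epsilon_1$; your single computation $\sum_{\epsilon}\epsilon_1\cdots\epsilon_m=0$ for $m\geq 1$ subsumes all three, a mild streamlining. Two small remarks: (i) for the subdivision-type generators of $W_{\mathcal{C}}$ you should invoke Lemma~\mref{lem:phi}.(\mref{it:phidiv}) (additivity of $\Phi$ under subdivision) rather than only the well-definedness Lemma~\mref{lem:cphi}, since a generator $C-\sum_i C_i$ involves a not-necessarily-simplicial subdivision and one must pass to a common simplicial refinement via Lemma~\mref{lem:ConeSubd}.(\mref{it:conesub}); this is the same step the paper takes. (ii) Your verification that $\{L_\epsilon+C'_j\}$ intersect along faces correctly reduces, via the orthogonal direct sum, to the two factors separately (a linear functional nonnegative on $L_\epsilon$, respectively on $C'_j$, extends by zero on the orthogonal complement), which is exactly the verification the paper leaves implicit.
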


\begin{proof}
(\mref{it:line})
First assume that the line $\ell$ is contained in a proper face of $C$. Let $L$ be the maximal subspace $L\subset C$ that contains $\ell$. Let $C'$ be the projection of $C$ in $lin^{\perp} (L;lin(C))$. Then
$C=L+C'$. We know that $C'$ is strongly convex because $L$ is the maximal subspace in $C$.
\smallskip

Next assume that $\ell$ is not contained in any proper face of $C$. A generator $\vec u$ of $\ell$ gives rise to relative interior points $\vec u$ and $-\vec u$
   of $C$. Thus the projections of $C\cap
\{\vec v\, |\ (\vec v, \vec u)\ge 0\}$ and $C\cap \{\vec v \,|\ (\vec v,
\vec u)\le 0\}$ in $\lin^{\perp} (\ell;\lin(C))$  both coincide with $\lin^{\perp} (\ell;\lin(C))$. Notice that $\lin^{\perp} (\ell;\lin(C))\subset C$. Indeed, for any vector $v\in \lin^{\perp} (\ell;\lin(C))$, there exist $v'\in C$ and $a\in \RR _{\ge 0}$ such that $v'=au+v$. So $v=a(-u)+v'\in C$ by convexity. Therefore
$$C\supseteq
(\RR _{\ge 0}\vec u +\lin^{\perp} (\ell;\lin(C)))\cup (\RR _{\ge 0}(-\vec u
) +\lin^{\perp} (l;\lin(C)))$$
which implies that
$$C=\lin(\vec u) +\lin^{\perp} (\ell;\lin(C))$$
 is a linear
subspace, proving the claim.
\medskip

\noindent
(\mref{it:phis}) Because of Lemma~\mref{lem:phi}.(\mref{it:phidiv}), we only need to prove that $\Phi (C)=0$ if $C $ contains a line.

First consider the case when $C$ itself is a one-dimensional subspace.
Let $C=\deff_{\ge 0}u \cup \deff_{\ge 0}(-u)$. So
$$\Phi (C)=\frac {w(u)}{L_{u}}+\frac {w(-u)}{L_{-u}}=\frac {w(u)}{L_{u}}+\frac {w(  u)}{-L_{  u}}=0.
$$

Next consider the case when $C$ is a non-zero linear space. Take any basis $\{v_1, \cdots , v_k\}$ of $C$.
The family of cones $\{C_{\varepsilon_1\varepsilon_2\cdots\varepsilon_k} :=\ccone{\varepsilon_1 v_1,\varepsilon_2 v_2,\cdots ,\varepsilon_k v_k}\ | \varepsilon _i=\pm 1\}$ provides  a simplicial subdivision of $C$  and
$$w(\varepsilon_1 v_1, \varepsilon_2 v_2, \cdots, \varepsilon_k v_k)=w(v_1, v_2, \cdots, v_k).
$$
Thus
\begin{eqnarray*}
\Phi(C)&=&\sum_{\vep_i=\pm 1, 1\leq i\leq k} \Phi(C_{\varepsilon_1\varepsilon_2\cdots\varepsilon_k} ) \\ &=&\sum_{\vep_i=\pm 1, 1\leqq i\leq k} \frac{w(\vep_1 v_1,\cdots,\vep_k v_k)}{L_{\vep_1 v_1}\cdots L_{\vep_k v_k}} \\
&=& w(v_1,\cdots,v_k)\sum_{  1\leq i\leq k} \left (\frac{1}{L_{v_i}}+\frac{1}{L_{-v_i}}\right)\\
&=&w(v_1,   \cdots, v_k)\sum_{  1\leq i\leq k} (\frac {1}{L_{  v_i}}-\frac {1}{L_{  v_i}})\\
&=&0.
\end{eqnarray*}

Finally consider the case when $C$ is a cone that contains a line. By Proposition~\mref{pp:phis}.(\mref{it:line}) we have
$C=L \dotplus C'$ where $L$ is a linear subspace and $C'$ is a strongly convex cone.
Given a basis $\{v_1, \cdots , v_k\}$ of $L$,
the set $\{C_{\varepsilon_1,\varepsilon_2,\cdots,\varepsilon_k} :=\ccone{\varepsilon_1 v_1,\varepsilon_2 v_2,\cdots, \varepsilon_k v_k}+C'\ | \varepsilon _i=\pm 1\}$ provides  a subdivision of $C$. As in the case of a linear subspace, we have  on the one hand
$$w(C_{\varepsilon_1, \varepsilon_2 , \cdots, \varepsilon_k })=w(C_{1,1,\cdots ,1}),
$$
and  on the other hand
$$\Phi (C_{-1, \varepsilon_2 , \cdots, \varepsilon_k })=-\Phi (C_{1, \varepsilon_2 , \cdots, \varepsilon_k }).
$$
Consequently
$$\Phi(C)=\sum _{\varepsilon_1,\varepsilon_2,\cdots,\varepsilon_k}\Phi (C_{\varepsilon_1,\varepsilon_2,\cdots,\varepsilon_k})
=\sum _{\varepsilon_2,\cdots,\varepsilon_k}\Phi (C_{1,\varepsilon_2, \cdots, \varepsilon_k})+\Phi (C_{-1,\varepsilon_2, \cdots, \varepsilon_k})=0.$$
\end{proof}

\subsection {Subdivisions of cones and simple fractions}
\mlabel{subsec:sisom}

We next show that all relations among simple fractions are determined by those coming from subdivisions of the corresponding cones. As a preparation, we give some properties of cones and fractions.

\begin{lemma}
Let $\{C_i\}$ be a set of closed cones in $\deff^n_{\geq 0}$ that span the same linear subspace of $\deff^n$ and meet with each other only along faces. Then the set $\{\Phi(C_i)\}$ of fractions is linearly independent.
\mlabel{lem:gencone}
\end{lemma}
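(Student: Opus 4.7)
The overall plan is to express $\Phi(C_i)$ as a Laplace transform of the characteristic function of $C_i$ via Lemma~\mref{lem:phi}.(\mref{it:phiint}), and then to invoke uniqueness of the Laplace transform together with the fact that the cones have pairwise disjoint interiors. First I reduce to the case of full-rank simplicial cones inside the common linear span $V$. Using Lemma~\mref{lem:ConeSubd}.(\mref{it:conesub}), refine $\{C_i\}$ into a single simplicial subdivision $\{D_\alpha\}$ still meeting only along faces. Since the $C_i$ themselves meet only along faces, each top-dimensional piece $D_\alpha$ lies in a unique $C_{i(\alpha)}$, so a putative linear dependence $\sum_i\lambda_i\Phi(C_i)=0$ becomes $\sum_\alpha\lambda_{i(\alpha)}\Phi(D_\alpha)=0$, and it suffices to prove linear independence of the $\Phi(D_\alpha)$. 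Because $L_v(\vec z)=(v,\vec z)$ depends on $\vec z$ only through its orthogonal projection onto $V$, I may restrict $\vec z$ to $V$ and view each $D_\alpha$ intrinsically as a full-rank simplicial cone in the $k$-dimensional space $V$.

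Next I would produce a non-empty open set $U\subseteq V$ lying in every anti-dual cone $\check{D}_\alpha^-=\{\vec y\in V\mid (\vec y,c)<0 \text{ for all } c\in D_\alpha\setminus\{0\}\}$ simultaneously. Let $\vec z_0$ be the orthogonal projection onto $V$ of $-(1,\ldots,1)\in\RR^n$; for any nonzero generator $v\in\deff^n_{\ge 0}$ of any $D_\alpha$ one has $(v,\vec z_0)=(v,-(1,\ldots,1))=-\sum_\ell v_\ell<0$, so $\vec z_0\in\bigcap_\alpha\check{D}_\alpha^-$, and any small neighborhood of $\vec z_0$ inside this intersection serves as $U$. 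On $U$, Lemma~\mref{lem:phi}.(\mref{it:phiint}), applied intrinsically inside $V$, rewrites the candidate relation $\sum_\alpha \lambda_\alpha\Phi(D_\alpha)=0$ as
\[
(-1)^k\int_V f(x)\,e^{(x,\vec z)}\,dx=0,\qquad \vec z\in U,
\]
where $f:=\sum_\alpha \lambda_\alpha\,\mathbf{1}_{D_\alpha(\RR)}$ and integration is with respect to Lebesgue measure on $V$. Injectivity of the Laplace transform on a non-empty open set forces $f=0$ almost everywhere on $V$; but the $D_\alpha$ have pairwise disjoint $k$-dimensional interiors in $V$ on which $f$ takes the value $\lambda_\alpha$, so $\lambda_\alpha=0$ for every $\alpha$, hence $\lambda_i=0$ for every $i$.

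The main obstacle, and the place where the hypothesis $C_i\subseteq\deff^n_{\ge 0}$ is essential, is the construction of the common direction $\vec z_0$ in all the anti-dual cones; without such a common point the integral identity would hold only on pairwise disjoint open sets and Laplace uniqueness could not be invoked at one go. The toy example $C_1=\RR_{\ge 0}$, $C_2=\RR_{\le 0}$ in $\deff^1$ gives $\Phi(C_1)+\Phi(C_2)=1/z-1/z=0$, showing that without this positivity condition the conclusion already fails in dimension one. A minor technicality is that Lemma~\mref{lem:phi}.(\mref{it:phiint}) is stated for cones of full rank in $\deff^n$, but its proof is a change of variables that goes through verbatim for full-rank cones inside a subspace $V$ after passing to an orthonormal basis of $V$.
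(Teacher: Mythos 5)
Your argument is correct, but it takes a genuinely different route from the paper. The paper proves the lemma purely algebraically, by induction on the dimension of the cones: assuming a nontrivial relation $\sum_i a_i\Phi(C_i)=0$, it picks a cone $C_1$ containing part of the topological boundary of $\cup_i C_i$ (this is where $C_i\subseteq\deff^n_{\geq 0}$ enters there), so that one facet of $C_1$ lies in no other cone, multiplies the relation by the remaining linear form $L_1$ to drop to a relation among simple fractions of one lower degree supported on faces of the $C_i$, and invokes the induction hypothesis to force $a_1=0$. You instead pass to a common simplicial refinement via Lemma~\mref{lem:ConeSubd}.(\mref{it:conesub}) (using that the $C_i$ meet only along faces so each top-dimensional piece sits in a unique $C_i$), restrict to the common span $V$, and use the integral representation of Lemma~\mref{lem:phi}.(\mref{it:phiint}) together with injectivity of the Laplace transform and disjointness of the interiors; here positivity of the cones enters through the existence of the common direction $\vec z_0$, exactly the role your one-dimensional counterexample isolates. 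Your route trades the paper's elementary induction for an analytic input, but it is arguably more transparent geometrically and localizes cleanly where the hypothesis $C_i\subseteq\deff^n_{\ge 0}$ is used.

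Two points should be tightened, though neither is a real gap. First, when you transplant Lemma~\mref{lem:phi}.(\mref{it:phiint}) into $V$, the intrinsic Laplace transform of $\mathbf{1}_{D_\alpha(\RR)}$ produces the Gram-determinant factor $\sqrt{\det(A_{D_\alpha}^{T}A_{D_\alpha})}$, whereas $\Phi(D_\alpha)$ carries the weight $w(D_\alpha)$, the sum of absolute values of the maximal minors; these differ when $\dim V<n$ (e.g.\ $w(1,1)=2$ versus $\sqrt{2}$). So your $f$ should be $\sum_\alpha \lambda_\alpha\,c_\alpha\,\mathbf{1}_{D_\alpha(\RR)}$ with cone-dependent constants $c_\alpha>0$; since positivity of the $c_\alpha$ is all the final step needs, the conclusion is unaffected, but ``goes through verbatim'' is not quite accurate. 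Second, the injectivity of the two-sided Laplace transform from its vanishing on a nonempty open set $U$ of real parameters deserves a sentence: the transform is holomorphic on the tube $\{\mathrm{Re}\,\vec z\in U\}$, vanishing on the real slice forces it to vanish on the tube, and Fourier inversion in the imaginary directions then gives $f=0$ almost everywhere. With these two remarks added, your proof is complete.
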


\begin{proof}
We proceed by induction on the dimension of the closed cones $C_i$. If the dimension is $1$, then the set $\{C_i\}$ can contain only one element $\ccone{v}$ where $v$ is a nonzero vector. Thus, $\Phi(C_i)$ is a nonzero multiple of $1/L$ where $L$ is a nonzero linear form and the lemma is proved.

Assume that the lemma has been proved when the dimension of $C_i$ is $k\geq 1$ and consider a set $\{C_i\}$ of closed cones with dimension $k+1$ that satisfy the conditions in the lemma. Suppose $\{\Phi(C_i)\}_i$ is a linearly dependent family. Taking a subset of $\{\Phi(C_i)\}_i$ if necessary, we can assume that there are nonzero $a_i\in \deff, 1\leq i\leq r,$ such that
$$ f:=\sum_{i=1}^r a_i\Phi(C_i)=0.$$

Since $\cup_{i=1}^r C_i$ is contained in $\deff^n_{\geq 0}$ and hence has a topological boundary, we can also assume that $C_1=\ccone{v_1, \cdots, v_k}$ contains part of the boundary of $\cup_{i=1}^r C_i$. Then one of its facets, say the one spanned by $v_2, \cdots, v_k$, is not contained in any other cone. We can therefore rewrite $f=\sum\limits_{i=1}^r c_i\Phi(C_i)$ as
$$f=\frac {a_1}{L_1\cdots L_k}+\sum_{i=2}^r\frac{a_i}{L_{i1}\cdots L_{ik}},
$$
such that $L_2\cdots L_k\nmid L_{i1}\cdots L_{ik}, 2\leq i\leq r$. Since all the $C_i$s span the same linear space, for each $2\leq i\leq r$, we have $L_1=\sum_{j=1}^k c_{ij} L_{ij}$ so that
$$ \frac{L_1}{L_{i1}\cdots L_{ik}}=\sum_{j=1}^k \frac{c_{ij}}{L_{i1}\cdots \check{L}_{ij} \cdots L_{ik}},$$
where $\check{L}_{ij}$ means the term $L_{ij}$ is deleted.
Thus from $f=0$ we deduce that
$$0=L_1f= \frac{a_1}{L_2\cdots L_k}+ \sum_{i=2}^k a_i \sum_{j=1}^k \frac{c_{ij}}{L_{i1}\cdots \check{L}_{ij} \cdots L_{ik}},$$
which  is a linear combination of simple fractions of degree $k-1$. Since $L_2\cdots L_k$ does not divide any of the other forms, the coefficient of $1/(L_2\cdots L_k)$ is $a_1$ which is not zero by assumption. Furthermore the cones corresponding to the fractions are faces of $\{C_i\}$ and hence meet each other along  faces. Thus by the induction hypothesis, all the coefficients are zero, which  is a contradiction. This proves Lemma~\mref{lem:gencone}.
\end{proof}

Let $\deff(\vec{z})=\deff(\{z_i\}_{i\geq 1})$ be the field of fractions in the variables $\{z_i\,|\,1\leq i<\infty\}$ with coefficients in $\deff$. An element $f\in \deff(\{z_i\}_{i\geq 1})$ is called {\bf homogenous of degree $k$} if $f(\{tz_i\})=t^{-k}f(\{z_i\})$ for a nonzero scalar $t$.

\begin{lemma}
Let $f(\vec{z})\in \deff(\vec{z})$ be of the form $\sum_{k=1}^\infty f_k(\vec{z})$, where $f_k$ is homogeneous of degree $k$. If $f=0$, then $f_k=0, k\geq 1$.
\mlabel{lem:homog}
\end{lemma}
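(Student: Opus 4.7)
The plan is to exploit the scaling action of $\deff^\times$ on $\deff(\vec z)$ given by $\vec z \mapsto t\vec z$. Since $f=0$ identically, the scaled expression $f(t\vec z)$ must also vanish. By the homogeneity hypothesis $f_k(t\vec z)=t^{-k}f_k(\vec z)$, this gives the identity
\begin{equation*}
0 \;=\; f(t\vec z) \;=\; \sum_{k\ge 1} t^{-k}\, f_k(\vec z)
\end{equation*}
in $\deff(\vec z)$, valid for every nonzero $t\in\deff$.

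Before invoking this, I would first address the meaning of the infinite sum. Since $f$ is an element of $\deff(\vec z)$, it involves only finitely many variables and has a finite unique presentation as a numerator over a denominator. Writing numerator and denominator as sums of homogeneous components and clearing, one sees that $f$ itself decomposes as a finite sum of homogeneous rational functions. Hence only finitely many $f_k$ are nonzero, and there is a largest index $N$ with $f_N\neq 0$.

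Next, I would multiply the displayed identity by $t^N$ to obtain
\begin{equation*}
g(t) \;:=\; \sum_{k=1}^{N} t^{\,N-k}\, f_k(\vec z) \;=\; 0 \qquad \text{for every nonzero } t\in\deff.
\end{equation*}
Regarded as a polynomial in the single variable $t$ with coefficients in the field $\deff(\vec z)$, this $g(t)$ has degree at most $N-1$. Since $\deff\subseteq\RR$ is an infinite field, $g$ has infinitely many zeros in $\deff$; but a nonzero polynomial of degree $\le N-1$ over an integral domain has at most $N-1$ zeros. Hence $g$ is the zero polynomial, so every coefficient $f_k(\vec z)$ vanishes, as claimed.

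The only mildly subtle point is the preliminary finiteness remark: once one knows that any element of $\deff(\vec z)$ has a finite homogeneous decomposition, the argument reduces to the classical fact that a polynomial over a field vanishing on an infinite set is identically zero. In particular, there is no genuine analytic or combinatorial obstacle to overcome.
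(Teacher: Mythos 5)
Your proof is correct and uses the same key idea as the paper's: rescale $\vec z \mapsto t\vec z$ and separate the resulting powers of $t$. The paper's version evaluates at an arbitrary point $\vec z_0$ and reads off the coefficients of the resulting Laurent polynomial in $t$, whereas you stay inside $\deff(\vec z)[t]$ and invoke the bound on the number of roots of a nonzero polynomial over a field; this is only a cosmetic difference (your formal variant has the minor advantage of sidestepping the question of whether $\vec z_0$ might be a pole of some $f_k$).
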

\begin{proof}
For any given value $\vec{z}_0:=(z_{0,1},\cdots,z_{0,n},\cdots)$ of $\vec{z}$, consider the substitutions $z_i=z_{0,i} t$, where $t$ is a nonzero scalar. Then we obtain $f(\vec z)=\sum_k
f_k(\vec z_0)t^{-k}$ so that every coefficient $f_k(\vec{z}_0)$ has to be $0$. Thus $f_k(\vec{z})=0$.
\end{proof}

\begin{lemma}
\begin{enumerate}
\item
Let $k\geq 1$ be given. Let $g,h\in \deff(\vec{z})$ be linear combinations of simple fractions of homogeneous degree $k$ such that the linear factors in $g$ are in the linear span of $z_1,\cdots,z_k$ only, while at least one linear factor in each simple fraction in $h$ is not in the linear span of $z_1,\cdots,z_k$. If $g=h$, then $g=h=0$.
\mlabel{it:gh}
\item Let $f$ be in $\cals(\deff)$ and let $\frac{1}{L_1\cdots L_k}$ be a simple fraction in $f$. Let $G$ be the summand of $f$ consisting of simple fractions whose linear factors are in the linear span of $L_1,\cdots,L_k$. If $f=0$, then $G=0$.
\mlabel{it:comb}
\end{enumerate}
\mlabel{lem:gh}
\end{lemma}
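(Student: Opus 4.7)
The plan is to prove part (a) by a scaling/asymptotic argument, and then deduce (b) from (a) by a linear change of coordinates.

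For (a), I would first observe that $g$ and $h$ together involve only finitely many of the $z_i$'s, say $z_1,\ldots,z_N$ with $N\geq k$, and that $g$ is a rational function in $z_1,\ldots,z_k$ alone. Write $V_k$ for the $\deff$-linear span of $z_1,\ldots,z_k$. The idea is to specialize $z_1,\ldots,z_k$ at a generic point $(\zeta_1,\ldots,\zeta_k)\in\deff^k$, substitute $z_j=t\,\tilde z_j$ for $k<j\leq N$ with $(\tilde z_{k+1},\ldots,\tilde z_N)$ generic, and let $t\to\infty$. On the left, $g$ stays constant equal to $g(\zeta_1,\ldots,\zeta_k)$. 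On the right, each simple fraction $1/(M_1\cdots M_k)$ in $h$ has at least one factor $M_l\notin V_k$, so $M_l$ has a nontrivial component along some $z_j$ with $j>k$ and grows linearly in $t$ under the substitution; since every other $M_i$ is at worst linear in $t$, the whole fraction is $O(1/t)$, hence $h\to 0$. Equality $g=h$ as rational functions then forces $g(\zeta_1,\ldots,\zeta_k)=0$, and as this holds on a Zariski dense set, $g=0$, whence $h=g=0$.

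For (b), I would extend $L_1,\ldots,L_k$ to a basis $L_1,\ldots,L_N$ of the $\deff$-span of all linear forms occurring in $f$, and perform the invertible linear change of variables $y_i:=L_i$. In the new coordinates the simple fractions in $G$ have factors only in $\mathrm{span}_\deff(y_1,\ldots,y_k)$, while those in $H:=f-G$ have at least one factor outside that span. Applying (a) with $g:=G$ and $h:=-H$, the hypothesis $f=0$ becomes $g=h$, so $G=-H=0$.

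The main obstacle I foresee is the genericity step that underpins the asymptotic $h=O(1/t)$: one needs $(\tilde z_{k+1},\ldots,\tilde z_N)$ chosen so that the component of each $M_l\notin V_k$ orthogonal to $V_k$ evaluates to a nonzero quantity, and simultaneously $(\zeta_1,\ldots,\zeta_k,t\tilde z_{k+1},\ldots,t\tilde z_N)$ must avoid the pole loci of $g$ and $h$ for large $t$. Since only finitely many linear forms appear in $g$ and $h$, this amounts to avoiding a finite union of proper affine subspaces, which is possible as $\deff\subseteq\RR$ is infinite.
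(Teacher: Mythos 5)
Your proof is correct and follows essentially the same route as the paper: part (a) is a one-parameter scaling argument that separates $z_1,\dots,z_k$ from the remaining variables and uses genericity to keep the ``outside'' components of the bad factors nonzero, and part (b) is the identical change-of-basis reduction to (a). The only cosmetic difference is that you scale the tail variables and compare decay rates as $t\to\infty$, whereas the paper scales $z_1,\dots,z_k$ and compares pole orders at $t=0$; by homogeneity of degree $-k$ these are the same computation.
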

\begin{proof}
(\mref{it:gh}) Let
$$h=\sum_{i=1}^r \frac{a_i}{L_{i1}\cdots L_{ik}}.$$
Then for each $1\leq i\leq r$, at least one $L_{ij}$ is not in the linear span of $z_1,\cdots,z_k$. For an positive integer $n$ we set $[n]= \{1,\cdots, n\}$. Let $J\subseteq [r]\times [k]$ be the set of indices $(i,j)\in [r]\times [k]$ such that $L_{ij}$ do not lie in the linear span of $z_1,\cdots,z_k$. For $(i,j)\in J$, we write
$$ L_{ij}=L_{ij}' + L_{ij}'',$$
where $L_{ij}'$ lies in the linear span of $z_1,\cdots,z_k$ and $L_{ij}''$  is a nonzero form in the linear span of $z_{k+1},\cdots,z_n$. Here $n$ is the largest index such that $z_n$ appears in $L_{ij}$. Thus the product $\prod_{i,j}L_{ij}''$ is a nonzero polynomial so that  there is an evaluation  $z_i=c_i, c_i\in \deff, k+1\leq i\leq n,$ such that
$$\ell_{ij}:=L_{ij}''(c_{k+1},\cdots,c_n)\neq 0, (i,j)\in J.$$

Take the substitution $z_i=x_it, 1\leq i\le k$ and $ z_i=c_i, k+1\le k\le n$ in $g$ and $h$, where $x_i$ and $t$ are variables. On one hand we have $g(x_1t,\cdots,x_kt)=t^{-k} g(x_1,\cdots,x_k).$ So $g$ has a pole at $t=0$ of order $k$ unless $g=0$. On the other hand, for $(i,j)\in J$, since $L_{ij}''$ is nonzero in $\deff$ for this substitution, $1/L_{ij}(x_1t,\cdots,x_kt,c_{k+1},\cdots,c_n)$   does not have a pole at $t=0$. The order of $t=0$ as pole of $h(x_1t,\cdots,x_kt,c_{k+1},\cdots,c_n)$ is therefore at most $k-1$. Hence we must have $g=0$ and $h=0$.
\smallskip

\noindent
(\mref{it:comb})
Fix a simple fraction $\frac{1}{L_1\cdots L_k}$ in $f$. Let $f=G+H$, where $G$ is the linear combinations of simple fractions
whose linear forms are linear combinations of $L_1, \cdots, L_k$ only, and $H$ is the linear
combination of simple fractions with at least one linear form that is not any linear combination of $L_1,\cdots,L_k$.

Expand $L_1,\cdots,L_k$ to a basis $L_1,\cdots,L_k,L_{k+1},\cdots,L_n$ of the linear subspace generated by the linear forms that appear in $f$. Expand $L_1,\cdots,L_n$ further to a system $\{L_i\}_{i\geq 1}$ of linear forms that form a basis of the linear span of $\{z_i\}_{i\geq 1}$. Then the linear map $L_i\mapsto z_i, 1\leq i<\infty$ induces an algebraic automorphism on $\deff(\vec{z})$. Under this automorphism, the element $G$ (resp. $H$) above is sent to a $g$ (resp. an $-h$) in the first part of the  lemma . Thus $f=0$ means $g=h$. By  the first part of the lemma, $g=h=0$ and hence $G=H=0$.
\end{proof}

\begin {defn} {\rm
\begin{enumerate}
\item
Consider a fraction $\frac {a}{L_1\cdots L_k}$, where $a\in \ZZ$ and $L_1=\sum_{j=1}^n a_{1j}z_j,\cdots, L_k=\sum _{j=1}^n a_{kj}z_j \in \ZZ [\vec{z}]$. If $v_1:=(a_{11},\cdots, a_{1n}), \cdots, v_k:=(a_{k1},\cdots, a_{kn})$ is part of a basis of $\ZZ^n$, then the fraction is called a {\bf smooth simple fraction}. The linear subspace of $\QQ(\vec{z})$ spanned by the smooth simple fractions is denoted by $\cals_\cm(\QQ)$.
\item
Let $W_\cm$ denote the subspace of $\QQ\mcc(\QQ)$ generated by the following two types of elements:
\begin{enumerate}
\item
smooth closed cones containing a linear $\QQ$-subspace, and
\item
linear combinations $C-\sum_{i=1}^r C_i$, where $\{C_i\}$ is a smooth subdivision of a closed smooth cone $C$.
\end{enumerate}
\end{enumerate}
}
\end{defn}

\begin {lemma} We have $ \cals(\QQ) = \cals_\cm(\QQ),$ where as before $ \cals(\QQ)$ stands for the space of simple fractions with rational coefficients.
\mlabel{lem:smfr}
\end {lemma}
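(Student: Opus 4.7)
The inclusion $\cals_\cm(\QQ)\subseteq \cals(\QQ)$ is immediate from the definitions, since vectors that are part of a $\ZZ$-basis of $\ZZ^n$ are in particular linearly independent over $\QQ$. So the content of the lemma is the reverse inclusion: every simple fraction with rational coefficients is a $\QQ$-linear combination of smooth simple fractions.

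The plan is to reduce the problem to a subdivision statement for simplicial rational cones, which is already available. Start with a simple fraction $\frac{1}{L_1\cdots L_k}$ where $L_i=\sum_j a_{ij}z_j\in\QQ[\vec z]$ are linearly independent over $\QQ$. First I would clear denominators by scaling each $L_i$ by a positive integer, which only changes the fraction by a nonzero rational factor; thus we may assume $a_{ij}\in\ZZ$. Setting $v_i:=(a_{ij})_j\in\ZZ^n$, the vectors $v_1,\ldots,v_k$ are linearly independent over $\QQ$, so $C:=\ccone{v_1,\ldots,v_k}$ is a strongly convex rational simplicial cone of dimension $k$.

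Now I invoke the geometric machinery already built in the paper. By Proposition~\mref{pp:ratsm}.(\mref{it:ratsm}), $C$ admits a subdivision $\{C_1,\ldots,C_r\}$ into smooth cones. Applying the linear map $\Phi$ defined in Eq.~(\mref{eq:phin}), and using the fact (Lemma~\mref{lem:cphi}) that $\Phi$ is well defined on subdivisions, gives
$$\Phi(C)\;=\;\sum_{i=1}^r \Phi(C_i).$$
By the definition of $\Phi$, the left side equals $\frac{w(C)}{L_1\cdots L_k}$, and since the $v_i$ are linearly independent at least one $k\times k$ minor of $A_C$ is nonzero, so $w(C)$ is a positive integer. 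For each $C_i$, smoothness means its spanning vectors are part of a basis of $\ZZ^n$, so $\Phi(C_i)=\frac{w(C_i)}{L'_{i,1}\cdots L'_{i,k}}$ is an integer multiple of a smooth simple fraction. Dividing by $w(C)$ expresses $\frac{1}{L_1\cdots L_k}$ as a $\QQ$-linear combination of smooth simple fractions, and undoing the initial rescaling keeps us in $\cals_\cm(\QQ)$.

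There is no serious obstacle here: the key inputs (existence of a smooth subdivision of a rational simplicial cone, and compatibility of $\Phi$ with subdivision) have already been established. The only thing one has to check carefully is that $w(C)\neq 0$ so that the final division is legitimate, and that the integer $w(C_i)$ for a smooth $C_i$ does indeed produce a valid smooth simple fraction — both of which follow directly from the definitions.
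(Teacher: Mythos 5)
Your proof is correct and follows essentially the same route as the paper: form the rational simplicial cone $\ccone{v_1,\cdots,v_k}$ from the linear forms, subdivide it into smooth cones via Proposition~\ref{pp:ratsm}.(\ref{it:ratsm}), and push the identity through $\Phi$ using its invariance under subdivision to write $\frac{1}{L_1\cdots L_k}=\frac{1}{w(v_1,\cdots,v_k)}\sum_i\Phi(C_i)$ as a rational combination of smooth simple fractions. Your added remarks on clearing denominators and on $w(C)\neq 0$ are harmless refinements of the same argument.
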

\begin{proof}
Let $1/(L_1\cdots L_k)$ be a simple fraction in $\cals(\QQ)$ with $L_i=\sum_{j=1}^n c_{ij}z_j\in \QQ[z_1,\cdots,z_n]$. Let $v_i=\sum_{j=1}^n c_{ij} e_j$. By Proposition~\mref{pp:ratsm}.(\mref{it:ratsm}), the rational cone $\ccone{v_1,\cdots,v_k}$ admits a subdivision $\{C_i\}$ consisting of smooth cones. By Proposition~\mref{pp:phis}.(\mref{it:phis}), we have
$$ \frac{1}{L_1\cdots L_k} =\Phi\left(\frac{1}{w(v_1,\cdots,v_k)}\ccone{v_1,\cdots,v_k}\right) = \sum_i \frac{1}{w(v_1,\cdots,v_k)}\Phi(C_i)$$
with a multiple of $\Phi(C_i)$ smooth, as needed.
\end{proof}

Thus the map $\Phi: \QQ\cc(\QQ)\to \cals(\QQ)$ in Eq.~(\mref{eq:cphi}) restricts to a map
$$ \Phi_\cm: \QQ\mcc(\QQ)\to \cals_\cm(\QQ).$$

\begin{theorem}\label{thm:Phibijective}
Let $W_{\mathcal{C}}$ be   defined as above Proposition~\mref{pp:phis}.
Then
$\ker \Phi = W_{\mathcal{C}}$ and $\ker \Phi_\cm =W_\cm$. Thus we have linear bijections
\begin{equation}
\overline{\Phi}: \deff \overline{\cc}(\deff):=\deff \cc(\deff)/W_{\mathcal{C}}\cong \cals(\deff)\ \text{ and }
\overline{\Phi}_\cm: \QQ \overline{\mcc}(\QQ):=\QQ \cc(\QQ) /W_\cm\cong \cals_\cm(\QQ).
\mlabel{eq:phibij}
\end{equation}
\mlabel{thm:cfbij}
\end{theorem}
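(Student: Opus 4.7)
The inclusion $W_{\mathcal{C}} \subseteq \ker\Phi$ has been established in Proposition~\ref{pp:phis}, so the plan has three movements: (i) surjectivity of $\Phi$, (ii) the reverse inclusion $\ker\Phi \subseteq W_{\mathcal{C}}$, and (iii) transport to the smooth setting. For (i), given any simple fraction $\frac{1}{L_1\cdots L_k}$ with $L_i(\vec z)=(v_i,\vec z)$, the vectors $v_1,\ldots,v_k$ are linearly independent by hypothesis, so $C:=\ccone{v_1,\ldots,v_k}$ is a closed simplicial cone and $\Phi(C)=w(C)/(L_1\cdots L_k)$ is a nonzero scalar multiple of the given fraction, exhibiting it in the image of $\Phi$.

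For (ii), the strategy is to reduce any $x=\sum_i a_i C_i\in\ker\Phi$ to a manageable normal form modulo $W_{\mathcal{C}}$ and then unpack the equation $\Phi(x)=0$ via the three structural lemmas already proved. First I would group the $C_i$'s by the linear subspace $V$ that each one spans; for each such $V$, Lemma~\ref{lem:ConeSubd}.(\ref{it:conesub}) supplies a common simplicial refinement $\{D^V_\alpha\}$ of that subfamily such that distinct $D^V_\alpha$'s intersect only along faces. Each difference $C_i-\sum_{D^V_\alpha\subseteq C_i} D^V_\alpha$ lies in $W_{\mathcal{C}}$, and any $D^V_\alpha$ containing a line is itself in $W_{\mathcal{C}}$, so modulo $W_{\mathcal{C}}$ we may replace $x$ by $y:=\sum_V\sum_\alpha b^V_\alpha D^V_\alpha$ in which every $D^V_\alpha$ is strongly convex and simplicial.

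Applying $\Phi$, homogeneity (Lemma~\ref{lem:homog}) separates $\Phi(y)=0$ by total degree, so only cones of a common dimension $d$ interact; and within a fixed dimension two $d$-dimensional spans $V\neq V'$ are necessarily incomparable, so invoking Lemma~\ref{lem:gh}.(\ref{it:comb}) against any $\Phi(D^{V_0}_{\alpha_0})$ collects exactly the partial sum over cones with span $V_0$, forcing $\sum_\alpha b^{V_0}_\alpha \Phi(D^{V_0}_\alpha)=0$ for each $V_0$ separately. Since $\{D^{V_0}_\alpha\}_\alpha$ meet only along faces, Lemma~\ref{lem:gencone} guarantees $\{\Phi(D^{V_0}_\alpha)\}_\alpha$ is linearly independent, hence every $b^{V_0}_\alpha=0$, so $y=0$ and $x\in W_{\mathcal{C}}$. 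Combining (i) and (ii) yields the bijection $\bar\Phi$.

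The main obstacle is the bookkeeping in the reduction: one must arrange, inside each linear span $V$, a single simplicial refinement whose pieces pairwise intersect only along faces so that Lemma~\ref{lem:gencone} becomes applicable, which is exactly what Lemma~\ref{lem:ConeSubd}.(\ref{it:conesub}) delivers, and one must extract the partial sum for each $V$ cleanly from a sum involving cones of different dimensions, which is handled by combining Lemma~\ref{lem:homog} with Lemma~\ref{lem:gh}.(\ref{it:comb}). For (iii), the smooth case $\ker\Phi_\cm = W_\cm$ runs by the same three-movement argument with Proposition~\ref{pp:ratsm}.(\ref{it:ratsm}) and Lemma~\ref{lem:ConeSubd}.(\ref{it:smoothsub}) in place of their non-smooth counterparts, and with Lemma~\ref{lem:smfr} ensuring that $\Phi_\cm$ still surjects onto $\cals_\cm(\QQ)$.
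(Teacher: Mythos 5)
Your route is essentially the paper's: surjectivity by realizing each simple fraction as (a multiple of) $\Phi$ of the corresponding simplicial cone (with Lemma~\ref{lem:smfr} in the smooth case), then for $\ker\Phi\subseteq W_{\mathcal{C}}$ a reduction modulo $W_{\mathcal{C}}$ to simplicial cones meeting only along faces via Lemma~\ref{lem:ConeSubd}.(\ref{it:conesub}), separation by homogeneity (Lemma~\ref{lem:homog}), extraction of the cones with a fixed span via Lemma~\ref{lem:gh}.(\ref{it:comb}), and finally linear independence via Lemma~\ref{lem:gencone}; the smooth case by the substitutions you indicate. But there is one genuine gap: you invoke Lemma~\ref{lem:gencone} for the families $\{D^{V_0}_\alpha\}_\alpha$ without securing its hypothesis. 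That lemma is stated for cones lying in $\deff^n_{\geq 0}$, and its proof hinges on the union of the cones having a topological boundary, so that some cone owns a facet not shared by any other. In the generality in which the theorem must be read, $\deff\cc(\deff)$ contains cones that are not in the first orthant (the type-(a) generators of $W_{\mathcal{C}}$ are cones containing a linear subspace, and your own step discarding ``cones containing a line'' presupposes this), and there the independence claim is false for a general family of simplicial cones spanning the same subspace and meeting along faces: the four quadrants $\ccone{\pm e_1,\pm e_2}$ of $\QQ^2$ meet pairwise along faces, yet $\tfrac{1}{z_1z_2}+\tfrac{1}{(-z_1)z_2}+\tfrac{1}{z_1(-z_2)}+\tfrac{1}{(-z_1)(-z_2)}=0$. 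Such configurations necessarily arise for kernel elements coming from the subspace generators of $W_{\mathcal{C}}$, and for them your argument would conclude that all coefficients $b^{V_0}_\alpha$ vanish, which is false.

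The paper closes exactly this hole by a preliminary normalization that you omit: before applying $\Phi$, it arranges, modulo $W_{\mathcal{C}}$, that the union of the cones in the combination has a boundary --- if a chosen point $v_0$ is interior to some of the (simplicial) cones, each such cone $\ccone{v_1,\cdots,v_k}$ is traded, using the subspace and subdivision generators of $W_{\mathcal{C}}$, for the combination of its reflected cones $\ccone{\epsilon_1 v_1,\cdots,\epsilon_k v_k}$ with not all $\epsilon_i=1$, after which $v_0$ is interior to none of the resulting cones. Only after this step (which survives your later reductions, since those merely pass to subfamilies) does the boundary-facet argument behind Lemma~\ref{lem:gencone} apply. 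Your remark that ``any $D^V_\alpha$ containing a line is itself in $W_{\mathcal{C}}$'' does not substitute for this: simplicial cones never contain lines, so that clause is vacuous and the problematic configurations are not removed by it. Inserting the paper's boundary reduction at the start of your step (ii) repairs the argument; the remainder of your proposal, including the smooth case via Proposition~\ref{pp:ratsm}.(\ref{it:ratsm}) and Lemma~\ref{lem:ConeSubd}.(\ref{it:smoothsub}), is in line with the paper.
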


\begin{proof}
The surjectivity of the maps $\Phi$ and $\Phi_\cm$ follows from  the definitions of $\cals(\QQ)$ and $\cals_\cm(\QQ)$ combined with  Lemma~\mref{lem:smfr}. By Proposition~\mref{pp:phis}.(\mref{it:phis}), we have $W_{\mathcal{C}}\subseteq \ker \Phi$ and $W_{\mathcal{SC}}\subseteq W_{\mathcal{C}}(\QQ)\subseteq \ker\Phi_\cm$. So we only need to prove $\ker \Phi \subseteq W_{\mathcal{C}}$ and $\ker \Phi_\cm\subseteq W_{\mathcal{SC}}$.
We first prove the first inclusion.

Let $\sum_{i=1}^r a_i C_i$ be in $\ker \Phi$. First, we can assume that $\cup C_i$ has boundary. This is because if $\cup C_i$ is not the whole space, then it has a boundary. Otherwise, fix a point $v_0$, for all cones $\{C_j'\}\subset \{C_i\}$ containing $v_0$ as an interior point, then modulo subdivision therefore modulo $W_{\mathcal{C}}$, we can assume that $C_j'$ simplicial. If $C_j'=\ccone{v_1,\cdots, v_k}$, then modulo $W_{\mathcal{C}}$,
$$C_j'\sim \sum \ccone{\epsilon _1v_1, \cdots, \epsilon _kv_k},$$
where $\epsilon_i=\pm 1$ and the summation is taken over all possible $\epsilon _i$ except when all $\epsilon _i=1$. Now, $v_0$ is not an interior point of any resulting cones, so the union of resulting cones has a boundary.

Then
$$0=f(\vec{z}):= \Phi\left(\sum_{i=1}^r a_iC_i\right) = \sum_{i=1}^r a_i\Phi(C_i)\in \RR(\vec{z})$$
as a linear combination of monic simple fractions $\Phi(C_i)$. We  prove that $\sum_{i=1}^r a_i C_i$ lies in $W_{\mathcal{C}}$ by reducing this statement to Lemma~\mref{lem:gencone} by means of  the following reduction steps.

By Lemma~\mref{lem:homog}, we may assume  that $f(\vec{z})$ is homogeneous of degree $-k$.

By choosing a simple fraction $\frac{1}{L_1\cdots L_k}$ in $f$ whose linear forms span a minimal linear subspace and then applying Lemma~\mref{lem:gh}.({it:comb}), we may further assume that the linear forms of each simple fractions in $f$ span the same linear subspace.

Let $L_1,\cdots, L_k, L_{k+1}, \cdots , L_m$ be all the linear forms in $f$. Let
$$\{ C_j\,|\, 1\leq j\leq \ell\}$$
be the set of cones corresponding to the simple fractions in $f$, that is,
$$ f =\sum_{j=1}^l a_j\Phi(C_j).$$
On the grounds of our assumptions on the simple fractions, we conclude that the $C_j$'s span the same linear subspace of $\RR^n$.

Choose the simplicial subdivision $\{C_{ij}\}$ of $C_i, 1\leq i\leq r,$ as in Lemma~\mref{lem:ConeSubd}.(\mref{it:conesub}). Then
$$ \sum_{i=1}^r a_i C_i - \sum_{i,j}a_i C_{ij}$$
lies in $W_{\mathcal{C}}$ and we can write
$$\sum_{i,j} a_i C_{ij}=\sum_{\ell} b_\ell D_\ell,$$
where $D_\ell$ are distinct simplicial cones that meet only at faces.
Since
$$ 0=\Phi\left(\sum_{i=1}^r a_i C_i\right) = \Phi\left(\sum_{ij} a_i C_{ij} \right) =\Phi\left(\sum_{\ell} b_\ell D_\ell\right)=\sum_{\ell} b_\ell \Phi(D_\ell),$$
by Lemma~\mref{lem:gencone}, all the coefficients $b_\ell$ are zero. Thus
$$\sum_{i,j}a_i C_{ij}=\sum_{\ell} b_\ell D_\ell=0.$$
Therefore,
$$ \sum_{i=1}^r a_i C_i = \sum_{i=1}^r a_i C_i - \sum_{i,j}a_i C_{ij}$$
lies  in $W_{\mathcal{C}}$.
\smallskip

The proof of the second inclusion $\ker \Phi_\cm\subseteq W_\cm$ is the same as the proof of the first inclusion up to that   Lemma~\mref{lem:ConeSubd}.(\mref{it:conesub}) is replaced  by
Lemma~\mref{lem:ConeSubd}.(\mref{it:smoothsub}).
\end{proof}

\section{Decorated cones and pure fractions}
\mlabel{sec:dcone}
We next generalize the geometric interpretation of linear relations of simple fractions as subdivision of cones to the fractions with multiplicity for the linear forms. For this purpose, we need to generalize the notion of smooth cone to smooth decorated cones which involve multiplicity encoded in the decoration.

\subsection{Decorated closed cones}
\mlabel{ss:fcc}

\begin{defn} {\rm
\begin{enumerate}
\item
Let $C=\ccone{v_1,\cdots,v_k}$ be a smooth cone in the first orthant with its (unique) primary generating set $\{v_1,\cdots,v_k\}$ and let $s_1,\cdots,s_k$ be in $\ZZ_{\geq 1}$. We call the monomial $[v_1]^{s_1}\cdots [v_k]^{s_k}$ a {\bf decorated smooth cone}, $s_1+\cdots +s_k$ the {\bf weight} of the decorated cone and   $\ccone{v_1,\cdots,v_k}$ the underlying {\bf geometric cone} of the decorated cone which we can also denote by $[v_1]\cdots [v_k]$.
\item
The set of decorated smooth cones is denoted by $\dsmc$, regarded as a subset of the polynomial algebra $\QQ[\{[v]\in \QQ^\infty\}]$:
$$\dsmc:=\left\{[v_1]^{s_1}\cdots[v_k]^{s_k}\in\QQ\left [\{[v]\in \QQ^\infty\}\right] \,\Big|\, \begin{array}{l} \{v_1,\cdots,v_k\} \subseteq \ZZ^n \text{ is part of a basis of } \ZZ^n\\ s_1,\cdots,s_k\geq 1\end{array}\right\}.
$$
\item
For $i\geq 1$, define the {\bf conical derivation} in direction $e_i$ to be the linear operator
$$\delta_i: \QQ \dsmc \to \QQ \dsmc, \quad \delta_i( [v_1]^{s_1}\cdots [v_k]^{s_k}):=\sum _j s_j \ (e_i^*, v_j)[v_1]^{s_1}\cdots [v_j]^{s_j+1}\cdots [v_k]^{s_k},$$
where $\{e_1^*, e_2^*,\cdots \}$ is the dual basis to $\{e_1, e_2,\cdots \}$, and $(e_i^*, v_j)$ is the pairing between $e_i^*$ and $v_j$. Here $\QQ \dsmc$ is the $\QQ$-linear span of the set $\dsmc$ of smooth decorated closed cones.
\end{enumerate}
}
\end{defn}

\begin{remark}
{\rm
\begin{enumerate}
\item
The notion $[v_1]^{s_1}\cdots [v_k]^{s_k}$ is well-defined since the primitive generating set is unique.
\item
Since we will only consider decorated smooth cones, we will often suppress smooth from the notations.
\item
The term conical derivation is justified because of the following fact.
\end{enumerate}
}
\end{remark}

\begin{prop}
Let $V:=[v_1]^{s_1}\cdots [v_k]^{s_k}$ be a decorated smooth cone.
\begin{enumerate}
\item
The operator $\delta_i$ can be equivalently defined by
\begin{enumerate}
\item[(i)] $\delta_i([v])=(e_i^*,v)[v]^2$ for a smooth vector $v\in \ZZ^n$, and
\item[(ii)]
{\rm (Weak Leibniz Rule)} If $V$ is the product of decorated cones $V_1$ and $V_2$, then $\delta_i(V)=\delta_i(V_1)V_2+V_1\delta_i(V_2)$.
\end{enumerate}
\mlabel{it:leib}
\item
Let $\{v_i^*=\sum_j
c_{ij} e_j^*\}_i$ be a dual basis to $\{v_i\}_i$ in the sense that $(v_i, v_j^*)=\delta
_{ij}, 1\leq i, j\leq k$.
Define $\delta_{v_i^*}=\sum_{j} c_{ij} \delta_j.$ We have
\begin{equation}
[v_1]^{s_1}\cdots [v_k]^{s_k} =\frac
1{(s_1-1)!\cdots (s_k -1)!}\delta_{v_1^*}^{s_1-1}\cdots
\delta_{v_k^*}^{s_k-1}([v_1]\cdots [v_k]). \mlabel{eq:DiffCone}
\end{equation}
\mlabel{it:back}
\end{enumerate}
\mlabel{prop:DerCone}
\end{prop}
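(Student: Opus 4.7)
For part~(\ref{it:leib}), I would first verify property~(i) by direct substitution: applying the defining formula to the single-factor monomial $[v]=[v]^1$ with $s_1=1$ gives $\delta_i([v])=1\cdot(e_i^*,v)[v]^2$, which is exactly (i). For the weak Leibniz rule, the product $V_1V_2$ is to be understood inside the polynomial algebra $\QQ[\{[v]\}]$, so after extending $\delta_i$ linearly it suffices to check Leibniz on monomial decorated cones. If $V_1=[v_1]^{s_1}\cdots[v_j]^{s_j}$ and $V_2=[v_{j+1}]^{s_{j+1}}\cdots[v_k]^{s_k}$ with $V_1V_2=[v_1]^{s_1}\cdots[v_k]^{s_k}$, then the defining sum over $j=1,\dots,k$ splits as the sum over $j\leq j_0$ plus the sum over $j>j_0$, which is precisely $\delta_i(V_1)V_2+V_1\delta_i(V_2)$. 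Conversely, starting from (i) and the Leibniz rule and using induction on the total weight $s_1+\cdots+s_k$ recovers the explicit formula, showing the two characterizations agree.

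For part~(\ref{it:back}), the key computation is that $\delta_{v_j^*}$ acts by a scalar on each factor $[v_i]^{s_i}$: expanding the definition,
\begin{align*}
\delta_{v_j^*}\bigl([v_i]^{s_i}\bigr)
&=\sum_\ell c_{j\ell}\,\delta_\ell\bigl([v_i]^{s_i}\bigr)
=\sum_\ell c_{j\ell}\cdot s_i(e_\ell^*,v_i)\,[v_i]^{s_i+1}\\
&=s_i(v_j^*,v_i)\,[v_i]^{s_i+1}
=s_i\,\delta_{ji}\,[v_i]^{s_i+1}.
\end{align*}
Thus $\delta_{v_j^*}$ annihilates $[v_i]$ for $i\neq j$ and only increments the exponent of $[v_j]$. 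Combined with the Leibniz rule from part~(\ref{it:leib}), this shows that the operators $\delta_{v_j^*}$ mutually commute when applied to $[v_1]\cdots[v_k]$ and act independently on the distinct factors.

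It then remains to compute a single-factor iteration: an easy induction on $m$ using the above displayed formula gives
\[
\delta_{v_j^*}^{\,m}\bigl([v_j]\bigr)=m!\,[v_j]^{m+1}.
\]
Applying $\delta_{v_1^*}^{s_1-1}\cdots\delta_{v_k^*}^{s_k-1}$ to $[v_1]\cdots[v_k]$ and using that each $\delta_{v_j^*}$ only touches the factor $[v_j]$ yields $\prod_{j=1}^k(s_j-1)!\cdot[v_1]^{s_1}\cdots[v_k]^{s_k}$, from which~(\ref{eq:DiffCone}) follows by dividing through.

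The only mild subtlety is the interpretation of the Leibniz rule in part~(\ref{it:leib})(ii): two decorated cones in $\dsmc$ need not multiply to an element of $\dsmc$ in general (their combined primary vectors might fail to extend to a basis of $\ZZ^n$), so the rule must be read inside $\QQ[\{[v]\}]$ after extending $\delta_i$ by linearity. Once this point of view is adopted, both parts reduce to bookkeeping with the defining formula.
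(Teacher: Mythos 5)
Your proposal is correct and follows essentially the paper's own route: part (1) is the same direct verification plus uniqueness of the operator determined by (i) and the weak Leibniz rule, and part (2) rests on the same duality computation $\delta_{v_j^*}([v_i]^{s})=s\,(v_j^*,v_i)[v_i]^{s+1}=s\,\delta_{ji}[v_i]^{s+1}$ that the paper carries out via $\sum_\ell a_{i\ell}c_{j\ell}=\delta_{ij}$, with your factor-wise iteration $\delta_{v_j^*}^{m}([v_j])=m!\,[v_j]^{m+1}$ merely replacing the paper's induction on the excess weight $|s|-k$. One small touch-up: in the Leibniz check you should also allow factorizations in which $V_1$ and $V_2$ share a vector, i.e. $V_1=[v_1]^{a_1}\cdots[v_k]^{a_k}$ and $V_2=[v_1]^{b_1}\cdots[v_k]^{b_k}$ with $a_j+b_j=s_j$ (as the paper notes), which is verified by the same one-line split of the coefficient $s_j=a_j+b_j$.
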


\begin{remark}
{\rm
The product of two decorated smooth cones is not necessarily a decorated smooth cones so that we call "weak Leibniz rule" the above product rule which only applies for decorated smooth cones that can be factored into a product of two decorated smooth cones.
}
\end{remark}

\begin{proof}
(\mref{it:leib}) Since $\dsmc$ is multiplicatively generated by smooth vectors, there is unique operator $\delta$ satisfying the two conditions.  On the other hand, $\delta_i$ satisfies the first condition by definition. Further note that $V_1$ and $V_2$ must be of the form
$V_1=[v_1]^{a_1}\cdots [v_k]^{a_k}$ and $V_2=[v_1]^{b_1}\cdots [v_k]^{b_k}$ with $a_i+b_i=s_i$. Then $\delta_i$ also satisfies the second condition. Thus $\delta$ and $\delta_i$ must be the same.

\smallskip

\noindent
(\mref{it:back})
This will be proved by induction on $m:=\vert s\vert-k$, $s_i\geq 1, 1\leq i\leq k, k\geq 1$, with the case $m=0$ being trivial. Assume that Eq.~(\mref{eq:DiffCone}) has been proved for all fractions with $m=n\geq 1$ and consider a decorated cone
$[v_1]^{s_1}\cdots [v_k]^{s_k}$ with $m=n+1$.

Let $s_r$ be the first $s_i, 1\leq i\leq m$ with $s_i>1$. Then for $t_r=s_r-1$ and $t_i=s_i, i\neq r$, we have
$\vert t\vert -k=n$, so that  the induction assumption yields
\begin{equation}
[v_1]^{t_1}\cdots \cdots [v_k]^{t_k}= \frac{1}{(t_1-1)!\cdots (t_k -1)!}\delta_{v_1^*}^{t_1-1} \cdots
\delta_{v_k^*}^{t_k-1}([v_1]\cdots [v_k]).
\mlabel{eq:lind}
\end{equation}

Let $v_i$ and $v_j^*$, $1\leq i,j\leq k$, be  defined as in the proposition. Then for the column vectors $\vec{e}=(e_1,\cdots,e_n)^T$, $\vec{v}=(v_1,\cdots,v_k)^T$ and $\vec{v}^*=(v_1^*,\cdots,v_k^*)^T$, we have
$$\vec{v}=A \vec{e}, \quad \vec{v}^* = C \vec{e}$$
for $A=(a_{ij}), C=(c_{ij})\in M_{k\times n}(\RR)$.
By  duality, $AC^T=I_k$, that is,
$\sum_{\ell=1}^n a_{i\ell}c_{j\ell}=\delta_{ij}, 1\leq i,j\leq k,$ from which it follows that
\begin{eqnarray*}
\delta_{{v^*_r}}([v_1]^{t_1}\cdots [v_k]^{t_k})&=&
\big(\sum_{j=1}^n c_{rj}\delta_{j}\big)([v_1]^{t_1}\cdots [v_k]^{t_k})
\\&=& \sum_{j=1}^n c_{rj} \sum_{i=1}^n t_ia_{i,j} [v_1]^{t_1}\cdots [v_i]^{t_i+1}\cdots [v_k]^{t_k} \\
&=& \sum_{i=1}^n t_i \big(\sum_{j=1}^n a_{i,j}c_{rj}\big) [v_1]^{t_1}\cdots [v_i]^{t_i+1}\cdots [v_k]^{t_k} \\
&=& t_r [v_1]^{t_1}\cdots [v_r]^{t_r+1}\cdots [v_k]^{t_k}.
\end{eqnarray*}
Combining this with the induction hypothesis in Eq.~(\mref{eq:lind}), we obtain
{\allowdisplaybreaks
\begin{eqnarray*}
[v_1]^{s_1}\cdots  [v_k]^{s_k} &=&
[v_1]^{t_1}\cdots [v_r]^{t_r+1} \cdots [v_k]^{t_k} \\ &=& \frac 1{(t_1-1)!\cdots t_r! \cdots (t_k -1)!} \delta_{v_1^*}^{t_1-1}\cdots \delta_{v_r^*}^{t_r}\cdots
\delta_{v_k^*}^{t_k-1}([v_1]\cdots  [v_k])\\
&=& \frac
1{(s_1-1)!\cdots  (s_k -1)!} \delta_{v_1^*}^{s_1-1}\cdots
\delta_{v_k^*}^{s_k-1}([v_1]\cdots  [v_k]).
\end{eqnarray*}
}
This completes the induction.
\end{proof}

We generalize subdivisions of geometric cones to algebraic subdivisions of decorated cones.

\begin {defn}
{\rm
\begin {enumerate}
\item An {\bf algebraic subdivision} of  a smooth cone $[v_1]\cdots [v_k]$ is an element
$$\sum _i [v_{i1}]\cdots [v_{ik}] \in \QQ \dsmc,$$
where $\{[v_{i1} ]\cdots [v_{ik}]\}_i$ is a smooth subdivision of the cone $\ccone{v_1,\cdots,v_k}$ in $\mcc$.
\item An {\bf algebraic subdivision} of  a decorated smooth cone $[v_1]^{s_1}\cdots [v_k]^{s_k}$  is an element
$$\sum _i \frac
1{(s_1-1)!\cdots (s_k -1)!}\delta_{v_1^*}^{s_1-1}\cdots
\delta_{v_k^*}^{s_k-1}([v_{i1}]\cdots [v_{ik}]) \in \QQ \dsmc,
$$
where $\{[v_{i1}]\cdots [v_{ik}]\}_i$ is a closed smooth subdivision of the cone $[v_1]\cdots [v_k]$. We will use the notation
$$ [v_1]^{s_1}\cdots [v_k]^{s_k} \prec \sum _i \frac
1{(s_1-1)!\cdots (s_k -1)!}\delta_{v_1^*}^{s_1-1}\cdots
\delta_{v_k^*}^{s_k-1}([v_{i1}]\cdots [v_{ik}])
$$
to denote such an algebraic subdivision.
\end {enumerate}
}
\end{defn}

\begin{exam} {\rm
From the algebraic subdivision
$$[e_1][e_2] \prec [e_1][e_1+e_2]+[e_2][e_1+e_2],
$$
we obtain the algebraic subdivision
\begin{eqnarray*}
&&[e_1]^2[e_2]=\delta_{e_1^*}([e_1][e_2]) \\
&\prec& \delta_{e_1^*}([e_1][e_1+e_2]+[e_2][e_1+e_2]) =[e_1]^2[e_1+e_2]+[e_1][e_1+e_2]^2+[e_2][e_1+e_2]^2.
\end{eqnarray*}
}
\mlabel{ex:decsub}
\end{exam}

\subsection {Subdivision of decorated cones and pure fractions}

\begin{defn}
{\rm
\begin{enumerate}
\item
For a smooth simple fraction $\frac{1}{L_1\cdots L_k}$ and $s_1,\cdots,s_k\geq 1$, the fraction $\frac{1}{L_1^{s_1}\cdots L_k^{s_k}}$ is called a {\bf smooth pure fraction.}
The integer $\sum_{i=1}^k s_i$ is called the {\bf degree} of the fraction. The linear subspace of $\QQ(\{z_n\}_{n\geq 1})$ spanned by smooth pure fractions is denoted by $\calf _\cm(\QQ)$.
\item
Denote
$$\partial_i=-\frac{\partial}{\partial z_i}: \calf_\cm(\QQ)\to \calf_\cm(\QQ), \quad i\geq 1.$$
\item
Define
\begin{equation}
\Phi_{\dm}:= \QQ \dsmc \to \calf_\cm(\QQ), [v_1]^{s_1}\cdots [v_k]^{s_k} \mapsto \frac{w(v_1,\cdots,v_k)}{L_1^{s_1}\cdots L_k^{s_k}},
\mlabel{eq:phidsm}
\end{equation}
where $w(v_1,\cdots,v_k)$ is defined in Eq.~(\mref{eq:phi0}).
\end{enumerate}
}
\end{defn}

The following proposition follows from straightforward computations using Proposition~\mref{prop:DerCone}.

\begin{prop}
\begin{enumerate}
\item
For $i\in I$, we have $\Phi_\dm \circ \delta_i = \partial_i\circ \Phi_\dm$.
\mlabel{it:DPhi}
\item
Let a smooth pure fraction $\frac{1}{L_1^{s_1}\cdots L_k^{s_k}}$ be given. Let $\{L_i^*=\sum_j
c_{ij} z_j^*\}_i$ be dual to $\{L_i\}_i$ in the sense that $(L_i, L_j^*)=\delta
_{ij}, 1\leq i, j\leq k$. Define $\partial_{L_i^*}=\sum_{j} c_{ij} \partial_j.$ Then we have
\begin{equation}
\frac{1}{L_1^{s_1}\cdots L_k^{s_k}}
= \frac
1{(s_1-1)!\cdots (s_k -1)!}\partial_{L_1^*}^{s_1-1}\cdots
\partial_{L_k^*}^{s_k-1}\frac{1}{L_1\cdots L_k}. \mlabel{eq:diffrac}
\end{equation}
\mlabel{it:DerFrac}
\end{enumerate}
\mlabel{prop:DerFrac}
\end{prop}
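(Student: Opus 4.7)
The plan is to verify both parts by direct computation on the multiplicative generators of $\QQ\dsmc$, relying on the fact that the linear data defining a smooth decorated cone and the linear data defining its image fraction under $\Phi_\dm$ are encoded by the same matrix.

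For Part~(\mref{it:DPhi}), I would fix a decorated smooth cone $[v_1]^{s_1}\cdots [v_k]^{s_k}$ and evaluate both sides on this generator. Writing $v_j=\sum_\ell a_{j\ell} e_\ell$, the associated linear form is $L_j=\sum_\ell a_{j\ell}z_\ell$, so $(e_i^*, v_j)=a_{ji}$ coincides with the coefficient of $z_i$ in $L_j$. Unfolding the definition of $\delta_i$ and then applying $\Phi_\dm$ produces
\[
\Phi_\dm\bigl(\delta_i([v_1]^{s_1}\cdots[v_k]^{s_k})\bigr) = \sum_{j=1}^k s_j\, a_{ji}\,\frac{w(v_1,\ldots,v_k)}{L_1^{s_1}\cdots L_j^{s_j+1}\cdots L_k^{s_k}}.
\]
On the other hand, since $w(v_1,\ldots,v_k)$ does not depend on $z_i$, applying $\partial_i=-\partial/\partial z_i$ to $\Phi_\dm([v_1]^{s_1}\cdots[v_k]^{s_k})=w(v_1,\ldots,v_k)/(L_1^{s_1}\cdots L_k^{s_k})$ yields exactly the same expression. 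Hence the two operators agree on generators and therefore on all of $\QQ\dsmc$.

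For Part~(\mref{it:DerFrac}), I would apply $\Phi_\dm$ to both sides of Eq.~(\mref{eq:DiffCone}) from Proposition~\mref{prop:DerCone}. The left-hand side evaluates directly to $w(v_1,\ldots,v_k)/(L_1^{s_1}\cdots L_k^{s_k})$. The key observation for the right-hand side is that the matrix $C=(c_{ij})$ defining $v_i^*=\sum_j c_{ij}e_j^*$ coincides with the one defining $L_i^*=\sum_j c_{ij}z_j^*$: both duality conditions $(v_i,v_j^*)=\delta_{ij}$ and $(L_i,L_j^*)=\delta_{ij}$ reduce, via the shared coefficient matrix $A=(a_{ij})$, to the same linear system $\sum_\ell a_{i\ell}c_{j\ell}=\delta_{ij}$ on $C$. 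Combining this with Part~(\mref{it:DPhi}) by linearity yields the intertwining $\Phi_\dm\circ\delta_{v_i^*}=\partial_{L_i^*}\circ\Phi_\dm$ for every $i$. Iterating this relation and using $\Phi_\dm([v_1]\cdots[v_k])=w(v_1,\ldots,v_k)/(L_1\cdots L_k)$, we may cancel the nonzero integer $w(v_1,\ldots,v_k)$ to conclude Eq.~(\mref{eq:diffrac}).

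There is no serious obstacle here; the argument is essentially bookkeeping. The one point meriting mild care is the identification between the matrix $A$ describing the primitive generators and their associated linear forms, and the shared matrix $C$ for the two dual families, which is what makes the intertwining of $\delta_i$ with $\partial_i$ work on the nose. Once these identifications are recorded, the shift operator $\delta_i$ on decorated cones is by construction mirrored by the differential $\partial_i$ on pure fractions under $\Phi_\dm$, and both parts fall out immediately.
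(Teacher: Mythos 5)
Your argument is correct and is precisely the ``straightforward computation using Proposition~\mref{prop:DerCone}'' that the paper invokes without writing out: the verification of $\Phi_\dm\circ\delta_i=\partial_i\circ\Phi_\dm$ on monomial generators, the observation that the dual matrices for $\{v_i^*\}$ and $\{L_i^*\}$ solve the same linear system and hence coincide, and the application of $\Phi_\dm$ to Eq.~(\mref{eq:DiffCone}) followed by cancellation of the nonzero weight $w(v_1,\ldots,v_k)$ are all sound. No gaps; this matches the intended proof.
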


\begin{lemma}
Let $\{C_i\}_i$ be a set of decorated closed smooth cones in $\QQ^n$ such that the linear forms in every cone span the same linear subspace of $\QQ^n$ and the underlying geometric cones meet only along faces. Then the set $\{\Phi_{\dm}(C_i)\}_i$ of fractions is linearly independent.
\mlabel{lem:DecoratedCone}
\end{lemma}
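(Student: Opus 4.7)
The plan is to use the integral (Laplace transform) representation of $\Phi_\dm$ available from Lemma~\mref{lem:phi}.(\mref{it:phiint}) combined with Proposition~\mref{prop:DerFrac}, and then to conclude by injectivity of the Laplace transform together with the pairwise disjointness of the relative interiors of the cones guaranteed by the face-meeting hypothesis. First, by Lemma~\mref{lem:homog}, since $\Phi_\dm([v_{i1}]^{s_{i1}}\cdots[v_{ik}]^{s_{ik}})$ is homogeneous of degree $-W_i$ with $W_i=\sum_j s_{ij}$, I would restrict to fractions of a single common weight $W$. All cones also share a common dimension $k$ and a common $k$-dimensional ambient subspace $V\subseteq\RR^n$ by hypothesis, and all generators lie in the first orthant by the definition of decorated smooth cones.

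Next, combining the identity $\Phi([v_{i1}]\cdots[v_{ik}])=(-1)^k\int_{C_i(\RR)}e^{(x,z)}\,dx$ of Lemma~\mref{lem:phi}.(\mref{it:phiint}) with the differential formula of Proposition~\mref{prop:DerFrac}.(\mref{it:DerFrac}), and changing variables to positive coordinates $(t_{i1},\dots,t_{ik})$ of $x\in C_i(\RR)$ in the basis $(v_{i1},\dots,v_{ik})$, I would establish a Laplace representation
\begin{equation*}
\Phi_\dm(C_i)\;=\;c_i\int_{C_i(\RR)} p_i(x)\,e^{(x,z)}\,dx
\end{equation*}
valid for $z$ in the open cone $\Omega:=\bigcap_i\check{C_i}^-$, where $c_i$ is a nonzero rational scalar and $p_i(x)=t_{i1}^{s_{i1}-1}\cdots t_{ik}^{s_{ik}-1}$ is a monomial in coordinates that are strictly positive on the relative interior $C_i^\circ$. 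The cone $\Omega$ is nonempty because all $v_{ij}$ lie in the first orthant, so for instance $z=-\sum_\ell e_\ell^*$ pairs negatively with every nonzero vector of every $C_i(\RR)$. A supposed relation $\sum_i a_i\Phi_\dm(C_i)=0$ then becomes, after interchanging sum and integral and using that the overlaps of the $C_i(\RR)$ lie in lower-dimensional faces of Lebesgue measure zero in $V$,
\begin{equation*}
\int_V\Bigl(\sum_i a_i c_i\,p_i(x)\,\mathbf{1}_{C_i^\circ}(x)\Bigr)e^{(x,z)}\,dx=0\quad\text{for every }z\in\Omega.
\end{equation*}
Injectivity of the Laplace transform on an open cone of convergence forces the bracketed integrand to vanish almost everywhere on $V$, and since the relative interiors $C_i^\circ$ are pairwise disjoint (by the face-meeting hypothesis), restricting to each in turn yields $a_i c_i p_i(x)=0$ almost everywhere on $C_i^\circ$; as $p_i$ is a nonzero polynomial there and $c_i\neq 0$, this forces every $a_i=0$.

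The main obstacle I expect is the bookkeeping required to establish the Laplace representation cleanly: one must push the differential operators $\partial_{L_{ij}^*}^{s_{ij}-1}$ of Proposition~\mref{prop:DerFrac}.(\mref{it:DerFrac}) through the integral sign to produce the monomial $p_i(x)$, perform the Jacobian change of variables from Lebesgue measure on $V$ to the positive coordinates on $\RR_{>0}^k$ while keeping track of the overall nonzero constant $c_i$, and verify that all the integrals converge on one common open cone $\Omega$. Once this integral representation is securely in place, the injectivity and disjointness steps are formal and mirror the role played by Lemma~\mref{lem:gencone} in the simple-fraction case.
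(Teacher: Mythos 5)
Your route is genuinely different from the paper's (which argues purely algebraically, by induction on the weight: it multiplies the relation by the linear form $L_{v_1}$ of an edge carrying maximal decoration so as to drop the weight and reduce to Lemma~\mref{lem:gencone}), and most of your analytic plan can be made to work: the Laplace representation $\Phi_{\dm}(C_i)=c_i\int_{C_i(\RR)}p_i(x)e^{(x,z)}\,d\sigma(x)$ with a nonzero constant does hold on the nonempty open set $\Omega$, once you note that Lemma~\mref{lem:phi}.(\mref{it:phiint}) is stated only for full-rank cones, so for $k<n$ you must work with the $k$-dimensional Lebesgue measure on the common span $V$ and accept a constant of the form $\pm w(v_{i1},\cdots,v_{ik})/\sqrt{\det G_i}$ ($G_i$ the Gram matrix) rather than $\pm 1$; and the injectivity of the Laplace transform on a nonempty open subset of the convergence cone is true but is an external analytic input that needs a proof or citation (analytic continuation to a complex tube and Fourier inversion), beyond anything the paper itself invokes.

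The genuine gap is the last step. You conclude from ``the underlying geometric cones meet only along faces'' that the relative interiors $C_i^{\circ}$ are pairwise disjoint, and then read off each $a_i$ separately. But the hypothesis does not forbid two \emph{distinct decorated} cones from having the \emph{same} underlying geometric cone (a cone is a face of itself), and this is exactly the situation in which the lemma is used: algebraic subdivisions produce terms such as $[e_1]^2[e_1+e_2]$ and $[e_1][e_1+e_2]^2$ in Example~\mref{ex:decsub}, and the cones $D_m$ in the proof of Theorem~\mref{thm:wiso} typically repeat underlying geometric cones with different decorations. For such a pair the relative interiors coincide, so restricting $F=0$ to $C_i^{\circ}$ only gives the vanishing of the \emph{sum} of all terms sharing that geometric cone, not of each $a_i$ individually; as written your argument does not separate them. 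The repair is short: on the common relative interior all these terms are written in the same simplicial coordinates $t_1,\cdots,t_k>0$, and the functions $t_1^{s_1-1}\cdots t_k^{s_k-1}$ for distinct decoration vectors are distinct monomials, hence linearly independent on an open subset of $\RR_{>0}^k$, which forces each coefficient $a_ic_i$, and so each $a_i$, to vanish. With that addition (and the two analytic points above spelled out) your proof is correct, at the cost of heavier analytic machinery than the paper's weight-induction argument.
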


\begin {proof} We just need to prove that   a contradiction follows from any relation
\begin{equation}
\sum_{i=1} ^r a_i \Phi_{\dm}(C_i)=0
\mlabel{eq:contrel}
\end{equation}
with $0\neq a_i\in \QQ$ and $C_i=[v_{i1}]^{s_{i1}}\cdots [v_{ik}]^{s_{ik}}$   closed smooth cones with the conditions in the lemma. By Lemma \mref{lem:homog}, we can assume that for each $1\leq i\leq r$, the weight $\vert s_i\vert =s_{i1}+\cdots +s_{ik}$ is the same. We next proceed by induction on $s:=\vert s_i \vert$. So $s\geq k$.

If $s=k$, then all the edges of the cones have decoration 1. Then by Lemma \mref {lem:gencone} we must have $a_i=0$ for any index $i$, leading to the expected contradiction. Assume that a contradiction arises for any relation in Eq.~(\mref{eq:contrel}) with $s= n\geq k$ and consider such a relation with $s=n+1$. In this case, at least one edge, say $[v_1]$ in some decorated cone, has decoration greater than one.

Let $r_1$ be the maximal decoration of $[v_1]$ in all the cones. We split the family of cones into three disjoint sets. Let $C_1, \cdots, C_m$ be all the cones with edge $[v_1]$ decorated by $r_1$. Let $C_{m+1},\cdots,C_{m+\ell}$ be all the cones, if any, with $[v_1]$ decorated by a positive power less than $r_1$. Let $C_{m+\ell+1},\cdots,C_r$ be all the cones, if any, that do not contain $[v_1]$ in their spanning set. Then
$$L_{v_1}\sum_{i=1}^r a_i \Phi_{\dm}(C_i)=L_{v_1}\sum_{i=1}^m a_i \Phi_{\dm}(C_i)+L_{v_1}\sum _{i=m+1}^{m+\ell} a_i \Phi_{\dm}(C_i) +L_{v_1}\sum _{i=m+\ell+1}^r a_i \Phi_{\dm}(C_i).
$$
For any $m+1\leq i\leq m+\ell$, the power of $1/L_{v_1}$ in $L_{v_1}\Phi_{\dm}(C_i)$ is less than $r_1-1$. For $m+\ell+1\leq i\leq r$, $1/L_{v_1}$ does not appear as a linear form of $\Phi_{\dm}(C_i)$. Using the assumption of the proposition, we  write $L_{\nu_1}$ as a linear combination of the generators $L_{v^i_1},\cdots,L_{v^i_k}  $ of $C_i$:
$$L_{v_1}=a_{i1}L_{v_{i1}}+\cdots +a_{ik}L_{v_{ik}}.
$$
Thus $L_{v_1}\Phi_{\dm}(C_i)=\sum\limits_{j=1}^k \frac{a_{i1}}{L_{v_{i1}}^{s_{i1}}\cdots L_{v_{ij}}^{s_{ij}-1}\cdots L_{v_{ik}}^{s_{ik}}}$ is a linear combination of fractions that do not contain $L_{v_1}$ as a linear form. In summary, each monomial in $\sum\limits_{i=m+1}^r a_i L_{v_1}\Phi_{\dm}(C_i)$ has its power of $1/L_{v_1}$ less than $r_1-1$ so that no such monomial can   cancel  with any monomial in $L_{v_1}\sum\limits_{i=1}^m a_i\Phi_{\dm}(C_i)$.
Then from
$\sum\limits_{i=1}^r a_i\Phi_{\dm}(C_i)=0$ and thus  $L_{v_1}\sum\limits_{i=1}^r a_i\Phi_{\dm}(C_i)=0$.

In the equality
$$L_{v_1}\sum\limits_{i=1}^r a_i\Phi_{\dm}(C_i)=0,$$
the monomials from decorated cones $C_1, \cdots,C_m$ have non-zero coefficients $a_1, \cdots, a_m$, and the weight of each terms in the sum is $n$, by the induction hypothesis we must have $a_i=0, i=1, \cdots m$, which yields the expected contradiction.
\end{proof}

Let $W_{\dm}$ be the subspace of $\QQ\dsmc$ spanned by algebraic subdivision of decorated cones in $\dsmc$. More precisely,
$$
W_{\dm}:=\QQ  \left\{ U-\sum_{i} U_i \,\Big|\, U\in \dsmc \text{ and } \sum_i U_i \text{ is a smooth subdivision of } U \right\}.
$$
\begin{theorem}
The kernel of the linear map
$\Phi_{\dm}: \QQ \dsmc \to \calf_\cm(\QQ)$ is
$W_{\dm}$.
Thus $\Phi_\dm: \QQ \dsmc \to \calf_\cm(\QQ)$
induces a bijective linear map
$$\Phi_\dm: \QQ\overline{\dsmc}:=\QQ\dsmc /W_\dm \cong \calf_\cm(\QQ).$$
\mlabel{thm:wiso}
\end{theorem}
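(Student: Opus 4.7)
My plan is to mirror the proof strategy of Theorem~\mref{thm:Phibijective}, using the differentiation correspondence between $\Phi_\dm$ and $\Phi_\cm$ furnished by Proposition~\mref{prop:DerFrac}(a). Surjectivity of $\Phi_\dm$ is immediate: for a smooth pure fraction $1/(L_1^{s_1}\cdots L_k^{s_k})$, the associated vectors $v_1,\ldots,v_k$ are by definition part of a basis of $\ZZ^n$, so the decorated smooth cone $[v_1]^{s_1}\cdots[v_k]^{s_k}$ maps onto this fraction up to the nonzero rational factor $w(v_1,\ldots,v_k)$.

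For the inclusion $W_\dm\subseteq\ker\Phi_\dm$, I would take a generator $U-\sum_i U_i$ of $W_\dm$ with $U=[v_1]^{s_1}\cdots[v_k]^{s_k}$ and the algebraic subdivision $\sum_i U_i$ obtained by applying the operator $D:=\frac{1}{(s_1-1)!\cdots(s_k-1)!}\delta_{v_1^*}^{s_1-1}\cdots\delta_{v_k^*}^{s_k-1}$ to a smooth geometric subdivision of $[v_1]\cdots[v_k]$. By Proposition~\mref{prop:DerCone}(b), $U$ itself equals $D$ applied to $[v_1]\cdots[v_k]$, so the generator is $D$ applied to $[v_1]\cdots[v_k]-\sum_i[v_{i1}]\cdots[v_{ik}]$, which lies in $W_\cm\subseteq\ker\Phi_\cm$ by Theorem~\mref{thm:Phibijective}. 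The intertwining $\Phi_\dm\circ\delta_j=\partial_j\circ\Phi_\dm$ of Proposition~\mref{prop:DerFrac}(a) then yields $\Phi_\dm(U-\sum_i U_i)=0$.

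The substantive direction is $\ker\Phi_\dm\subseteq W_\dm$. Starting from $\xi=\sum_i a_i U_i$ with $\Phi_\dm(\xi)=0$, I plan to reduce to a normal form in four steps. First, by Lemma~\mref{lem:homog}, assume $\Phi_\dm(\xi)$ is homogeneous of a single degree, so all $U_i$ share a common total weight. Second, by Lemma~\mref{lem:gh}(b), assume further that the linear forms appearing across all $U_i$ span a common linear subspace, so all underlying geometric cones share a dimension. Third, invoke Lemma~\mref{lem:ConeSubd}(b) to produce a common smooth subdivision $\{D_\ell\}$ of those underlying geometric cones in which any two cells meet only along faces; for each $U_i$ the corresponding algebraic subdivision lies in $W_\dm$ by definition, and rewrites $U_i$ modulo $W_\dm$ as a linear combination of decorated smooth cones whose underlying geometric cones are drawn from $\{D_\ell\}$, since the $\delta_j$ only redistribute decoration along existing edges. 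Fourth, after these reductions $\xi\equiv\sum_j b_j V_j \pmod{W_\dm}$ where the $V_j$ now satisfy the hypotheses of Lemma~\mref{lem:DecoratedCone}; since $W_\dm\subseteq\ker\Phi_\dm$ from the previous paragraph, we have $\sum_j b_j\Phi_\dm(V_j)=0$, and Lemma~\mref{lem:DecoratedCone} forces each $b_j=0$, so $\xi\in W_\dm$.

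The delicate point will be the third step: checking cleanly that the algebraic subdivision of a decorated smooth cone along a geometric refinement of its underlying cone genuinely lies in $W_\dm$ in the sense required, and that after these replacements the condition ``underlying geometric cones meet only along faces'' needed for Lemma~\mref{lem:DecoratedCone} is preserved. The weak Leibniz rule of Proposition~\mref{prop:DerCone}(a), combined with the fact that the $\delta_j$ preserve underlying geometric cones, is the mechanism that lifts the un-decorated analysis of Theorem~\mref{thm:Phibijective} into the decorated setting without requiring fresh combinatorial input.
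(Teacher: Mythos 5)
Your proposal is correct and follows essentially the same route as the paper: reduction to a homogeneous combination via Lemma~\mref{lem:homog} and Lemma~\mref{lem:gh}, a common smooth subdivision from Lemma~\mref{lem:ConeSubd} lifted to decorated cones through the derivations of Proposition~\mref{prop:DerCone}, membership of these algebraic subdivisions in $W_\dm$ by definition, and then linear independence of the resulting fractions to force all coefficients to vanish, where your appeal to Lemma~\mref{lem:DecoratedCone} is precisely the decorated independence statement the argument needs. Your more direct surjectivity argument (a smooth pure fraction is, up to the nonzero weight $w(v_1,\cdots,v_k)$, the image of the corresponding decorated smooth cone) is a harmless simplification of the paper's derivation-based one, and the delicate point you flag in your third step is settled by the definitions themselves, since the conical derivations $\delta_j$ preserve the underlying geometric cones of the subdivision cells.
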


\begin {proof}
Any simple fraction in $\calf_\cm(\QQ)$ is an iterated derivation of a pure fraction in $\cals(\QQ)$ by Proposition~\mref{prop:DerFrac}.(\mref{it:DerFrac}) and, by Proposition~\mref{prop:DerFrac}.(\mref{it:DPhi}), derivations on $\calf_\cm( \QQ)$ are compatible with the derivations on $\QQ \dsmc(\QQ)$ under $\Phi_\dm$.
Then the surjectivity of $\Phi_\dm$ follows from the surjectivity of $\Phi_\cm$ in Theorem~\mref{thm:cfbij}.
Let $\sum_i a_iU_i$ be a smooth subdivision of a decorated closed smooth cone $U$. Then by definition, $U-\sum_{i} a_iU_i$ is an iterated derivation of a $C-\sum_i C_i$ where $\{C_i\}$ is a smooth subdivision of $C$. Thus by Proposition~\mref{prop:DerFrac}~(\mref{it:DPhi}),
$ \Phi(U-\sum_i a_iU_i) = \Phi(U)-\sum_i a_i\Phi(U_i)$ is an iterated derivation of $\Phi(C)-\sum_i \Phi(C_i)$.
By Proposition~\mref{pp:phis}(\mref{it:phis}),
$\Phi(C)-\sum_i \Phi(C_i)=\Phi(C-\sum_i C_i)$ is zero.
Hence its iterated derivations are also zero. Thus $W_\dm\subseteq \ker \Phi_\dm$.

On the other hand, let $F$ be a linear combination of decorated closed cones in $\dsmc$ such that the corresponding linear combination $f:=\Phi(F)\in \calf_\cm (\QQ)$ of fractions is zero. As in the proof of Theorem~\mref{thm:cfbij}, by applying Lemma~\mref{lem:gh}, we can assume that $f$ is homogeneous of degree $k$:
\begin{equation}
f=\sum_{i=1}^r \frac{a_i}{L_{i1}^{s_{i1}}\cdots L_{i\ell}^{s_{i\ell}}},
\mlabel{eq:ffract}
\end{equation}
where $s_{i1}+\cdots+s_{i\ell}=k$, and the linear forms span the same linear spaces.
In other words, $f=\Phi(F)$ where
$$F=\sum_{i=1}^r \frac{a_i}{w(v_{i1},\cdots,v_{i\ell})}
[v_{i1}]^{s_{i1}}\cdots [v_{i\ell}]^{s_{i\ell}}
,$$
where $\{v_{ij}\}_j, 1\leq i\leq r$ span the same linear space in $\QQ^n$.

Choosing smooth subdivisions of the smooth cones $[v_{i1}]\cdots [v_{i\ell}]$ as in Lemma~\mref{lem:ConeSubd} and then taking suitable derivations as in Proposition~\mref{prop:DerCone}.(\mref{it:back}), we obtain a smooth subdivision $\sum_j b_{ij}C_{ij}$ of $[v_{i1}]^{s_{i1}}\cdots [v_{il}]^{s_{il}}$ whose underlying geometric cones meet along faces.
Then
$$ \sum_{i=1}^r \frac{a_i}{w(v_{i1},\cdots,v_{i\ell})} [v_{i1}]^{s_{i1}}\cdots [v_{i\ell}]^{s_{i\ell}} - \sum_{i,j}\frac{a_ib_{ij}}{w(v_{i1},\cdots,v_{i\ell})} C_{ij}$$ lies in $W_\dm$ and we can write
$$\sum_{i,j} \frac{a_ib_{ij}}{w(v_{i1},\cdots,v_{i\ell})} C_{ij}=\sum_{m} b_\ell D_m,$$
where $D_m$ are distinct decorated smooth cones whose underlying geometric cones meet only at faces.
Since $W_\dm$ is in $\ker \Phi_\dm$, we have
\begin{eqnarray*}
0&=&\Phi_\dm\left(\sum_{i=1}^r \frac{a_i}{w(v_{i1},\cdots,v_{i\ell})} [v_{i1}]^{s_{i1}}\cdots [v_{i\ell}]^{s_{i\ell}}\right)\\
&=& \Phi\left(\sum_{ij} \frac{a_ib_{ij}}{w(v_{i1},\cdots,v_{i\ell})} C_{ij} \right)\\
&=&\Phi\left(\sum_{m} b_m D_m\right)\\
&=&\sum_{\ell} b_m \Phi(D_m).
\end{eqnarray*}
By Lemma~\mref{lem:gencone}, all the coefficients $b_m$ are zero. Thus
$$\sum_{i,j}\frac{a_ib_{ij}}{w(v_{i1},\cdots,v_{i\ell})} C_{ij}=\sum_{m} b_m D_m=0.$$
Therefore,
\begin{eqnarray*}
\sum_{i=1}^r \frac{a_i}{w(v_{i1},\cdots,v_{i\ell})} [v_{i1}]^{s_{i1}}\cdots [v_{i\ell}]^{s_{i\ell}}
&=& \sum_{i=1}^r \frac{a_i}{w(v_{i1},\cdots,v_{i\ell})} [v_{i1}]^{s_{i1}}\cdots [v_{i\ell}]^{s_{i\ell}} - \sum_{i,j}\frac{a_ib_{ij}}{w(v_{i1},\cdots,v_{i\ell})} C_{ij}\\
&=&
\sum_{i=1}^r \frac{a_i}{w(v_{i1},\cdots,v_{i\ell})} \left([v_{i1}]^{s_{i1}}\cdots [v_{i\ell}]^{s_{i\ell}} - \sum_{j}b_{ij} C_{ij} \right)
\end{eqnarray*}
lies  in $W_\cm$ and hence $W_{\dm}\supseteq \ker \Phi_{\dm}$.
\end{proof}

\section{Conical zeta values revisited: double subdivision relations and Shintani zeta values}
\mlabel{sec:czv}

The purpose of this section is to revisit conical zeta values using decorated closed cones. We provide a closed cone encoding of CZVs as a generalization of the shuffle encoding of MZVs. We then apply via $\Phi_\cm$ the relation of decorated cones with fractions to give the closed subdivision relation of CZVs and further the double subdivision relation of CZVs by combining with the open subdivision relation in Section 2. We also consider the relationship between CZVs and Shintani zeta values.

\subsection{Closed subdivision relation and shuffle relation}

\subsubsection {Closed cone encoding of CZVs}
We now give another encoding of CZVs by decorated closed cones.

\begin{defn}
Let $[v_1]^{s_1}\cdots[v_k]^{s_k}$ be a decorated smooth closed cone.
\begin{enumerate}
\item
Define the {\bf linearly constrained zeta value (LZV)}
$$\zeta^c([v_1]^{s_1} \cdots [v_k]^{s_k}):=\sum _{m_1=1}^\infty\cdots \sum _{m_r=1}^\infty\frac 1{(a_{11}m_1+\cdots +a_{1r}m_r)^{s_1}\cdots (a_{k1}m_1+\cdots+a_{kr}m_r)^{s_k}}
$$
if the sum is convergent, where $v_i=\sum_{j=1}^r a_{ij}e_j, 1\leq i\leq k$.
\item
Let $\QQ\dsmc_0$ denote the linear subspace of $\QQ\dsmc$ spanned by the set $\dsmc_0$ of decorated smooth cones $[v_1]^{s_1}\cdots [v_k]^{s_k}$ such that $\zeta^c ([v_1]^{s_1}\cdots [v_k]^{s_k})$ is convergent.
\item
Define
$$\zeta^c: \QQ\dsmc_0\to \RR, \quad [v_1]^{s_1}\cdots [v_k]^{s_k}\mapsto \zeta^c([v_1]^{s_1} \cdots [v_k]^{s_k}).$$
\end{enumerate}
\end{defn}

We refer the reader to Section~\mref{ss:shin} for the relationship among CZVs, LZVs and Shintani zeta values.
The following result is immediate from Theorem~\mref{thm:wiso}.

\begin{lemma} The linear map $\zeta^c:\QQ\dsmc_0\to \RR$ is zero on $W_\dm\cap \QQ\dsmc_0$. In particular, let
$$ [v_1]\cdots [v_k] \prec \sum_{i} [v_{i1}]\cdots [v_{ik}]$$
be  a smooth closed subdivision and let
$$D:=[v_1]^{s_1}\cdots [v_k]^{s_k} \prec \sum _i \frac
1{(s_1-1)!\cdots (s_k -1)!}\delta_{v_1^*}^{s_1-1}\cdots
\delta_{v_k^*}^{s_k-1}([v_{i1}]\cdots [v_{ik}])=\sum a_iD_i
$$
be the corresponding algebraic subdivision of  the decorated smooth cone $D$. Then we have
$$\zeta^c(D)=\sum_i a_i \zeta^c(D_i).$$
\mlabel{lem:CSubd}
\end{lemma}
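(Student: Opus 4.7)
The plan is to deduce the vanishing of $\zeta^c$ on $W_\dm\cap\QQ\dsmc_0$ from the vanishing of $\Phi_\dm$ on $W_\dm$ (Theorem~\mref{thm:wiso}) by evaluating the resulting rational-function identities at positive integer points and summing. For a generator $D-\sum_ia_iD_i$ of $W_\dm$ arising from an algebraic subdivision, Theorem~\mref{thm:wiso} immediately yields the identity $\Phi_\dm(D)=\sum_ia_i\Phi_\dm(D_i)$ in $\calf_\cm(\QQ)$, and the aim is to transfer this into the numerical identity $\zeta^c(D)=\sum_ia_i\zeta^c(D_i)$.

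The first technical step is to show that the weight $w(\cdot)$ of Eq.~(\mref{eq:phi0}) takes a common value on every decorated cone appearing in a single algebraic subdivision. The primary generators of a smooth cone form a $\ZZ$-basis of the sublattice $\ZZ^n\cap\mathrm{span}(v_1,\ldots,v_k)$, and a smooth subdivision keeps all sub-cones inside this common linear span and hence inside the same integer sublattice; any two such generating sets therefore differ by a unimodular matrix $M\in GL_k(\ZZ)$. Since $w(\cdot)$ is the sum of absolute values of all $k\times k$ minors of the generating matrix, right-multiplication by $M$ leaves it invariant, so a common value $w_0$ appears as the numerator in every $\Phi_\dm$-image in the subdivision. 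Dividing through by $w_0$ converts the $w$-weighted identity $\Phi_\dm(D)=\sum_ia_i\Phi_\dm(D_i)$ into the clean identity of pure fractions $\frac{1}{L_{v_1}^{s_1}\cdots L_{v_k}^{s_k}}=\sum_ia_i\frac{1}{L_{v_{i1}}^{t_{i1}}\cdots L_{v_{ik}}^{t_{ik}}}$ in $\calf_\cm(\QQ)$, where the coefficients $a_i$ and exponents $t_{ij}$ are precisely those produced by the iterated differentiation via Propositions~\mref{prop:DerCone} and~\mref{prop:DerFrac}.

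To conclude, I will evaluate this pure-fraction identity at an arbitrary $(m_1,\ldots,m_r)\in\ZZ^r_{\geq 1}$. Since the cones lie in $\RR^n_{\geq 0}$, every linear form $L_{v_j}$ has non-negative integer coefficients and takes a positive integer value on positive integer inputs, so the denominators are non-zero and the identity is valid pointwise. The hypothesis that $D$ and every $D_i$ lie in $\dsmc_0$ provides absolute convergence of each series involved, and Fubini then permits commuting the summation over $\ZZ^r_{\geq 1}$ with the finite linear combination, yielding the desired $\zeta^c(D)=\sum_ia_i\zeta^c(D_i)$. Linearity extends the conclusion from each generator to all of $W_\dm\cap\QQ\dsmc_0$, proving the first assertion; the ``in particular'' statement is simply this equality for the specific generator produced by the differentiated subdivision.

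The main obstacle in this plan is the verification that $w(\cdot)$ takes a common value along a smooth subdivision, since it is precisely this fact that converts the $w$-weighted identity of rational functions provided by Theorem~\mref{thm:wiso} into the unweighted numerical identity needed for $\zeta^c$. Once that observation is secured, the remaining steps---pointwise evaluation of a rational identity at positive integers together with a Fubini-type interchange of summations---are routine.
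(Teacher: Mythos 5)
Your overall route coincides with the paper's: both proofs push the algebraic subdivision through $\Phi_\dm$ (using $W_\dm\subseteq\ker\Phi_\dm$ from Theorem~\mref{thm:wiso}), evaluate the resulting identity of fractions at the points of $\ZZ^r_{\geq 1}$, and sum. Your preliminary observation that $w(\cdot)$ is constant along a smooth subdivision is correct (the primary generators of each sub-cone form a $\ZZ$-basis of $\ZZ^n\cap\mathrm{span}(v_1,\cdots,v_k)$, so the generating matrices differ by a unimodular factor which changes every maximal minor only by a sign), and it is a point the paper leaves implicit when it identifies $\zeta^c(D)$ with the sum of the values of $\Phi_\dm(D)$; so that part of your write-up is a genuine, if small, improvement in care.

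The gap is in the convergence step. You treat the convergence of every $\zeta^c(D_i)$ as a hypothesis (``$D$ and every $D_i$ lie in $\dsmc_0$''), but the lemma is needed, and is proved in the paper, in the stronger form where only the convergence of $\zeta^c(D)$ is given: in the proof of Theorem~\mref{thm:CSubd} one only knows that the cone pair $(C,D)$ is convergent, and Lemma~\mref{lem:CSubd} is invoked to supply both the convergence of the pieces $\zeta^c(D_j)$ and the identity. The missing idea is positivity. The coefficients $a_i$ are positive by the definition of the algebraic subdivision: each generator $u$ of a subdividing cone lies in $\ccone{v_1,\cdots,v_k}$, so the pairings $(v_r^*,u)$ occurring in $\delta_{v_r^*}$ are non-negative. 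Moreover each pure fraction takes positive values on $\ZZ^r_{\geq 1}$. Hence at every lattice point $m$ the terms $a_i\Phi_\dm(D_i)(m)$ are non-negative and their sum equals $\Phi_\dm(D)(m)$, so the convergence of $\sum_m\Phi_\dm(D)(m)$, i.e.\ of $\zeta^c(D)$ after dividing by the common weight, forces the convergence of each $\sum_m\Phi_\dm(D_i)(m)$ by comparison, and the interchange of the finite sum over $i$ with the sum over $m$ is a Tonelli-type statement for non-negative terms rather than an application of Fubini to series already assumed absolutely convergent. Without this positivity argument your right-hand side $\sum_i a_i\zeta^c(D_i)$ is not even known to be defined under the hypotheses actually available. (Your closing remark that linearity extends the conclusion to all of $W_\dm\cap\QQ\dsmc_0$ is also too quick, since an element of that intersection is a combination of subdivision relations whose individual terms need not lie in $\QQ\dsmc_0$; but this subtlety is not addressed in the paper's own proof either.)
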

Such a relation is called a {\bf closed subdivision relation}.
\begin{proof}
Note that the coefficients $a_i$ are all positive from the definition of the decorated division. Hence all the terms in
the sum of fractions
$\Phi_\cm(D)=\sum_i a_i \Phi_\cm(D_i)$
are positive. Thus from the convergence of $\zeta^c(D):=\sum_{m_1,\cdots,m_r\geq 1} \Phi_\cm(D)$, we obtain the convergence of $\zeta^c(D_i)$ and the equation in the lemma.
\end{proof}

\subsubsection{Shuffle relations as closed subdivision relations}

Now consider the space $\QQ\dcch$ spanned by the set $\dcch$ of decorated closed Chen cones. Two Chen cones $x_{\sigma(1)}\leq \cdots\leq x_{\sigma(k)}$ and  $x_{\tau(1)}\leq \cdots\leq x_{\tau(l)}$ defined by two permutations $\sigma$ and $\tau$ as in Section~\mref{ss:osub} can only meet along faces where some of the coordinates $x_{\sigma(i)}$ and $x_{\tau(j)}$ coincide.
Thus by Lemma \mref {lem:DecoratedCone}, the set of Chen cones is linearly independent in $\QQ\overline{\dsmc}$.
Thus by Theorem~\mref{thm:wiso}, the linear isomorphism $\overline{\Phi}_\dsmc$ obtained there restricts to a bijection
$$\overline{\Phi}_{ch}: \QQ\dcch \to \calf_{ch},
$$
where
$$ \calf_{ch}:=\QQ\left\{ \frakf\wvec{s_1,\cdots,s_k}{z_1,\cdots,z_k} := \frac{1}{(z_1+\cdots+z_k)^{s_1}\cdots z_k^{s_k}}\,\Big|\, s_i\geq 1,  1\leq i\leq k, k\geq 1\right\}$$
is the space of multiple zeta fractions~\mcite{GX}.
We refer the reader to the introduction for some of the notations used hereafter.

On the other hand, recall~\mcite{H,IKZ} the vector space
$$\calh^{\ssha}_1: = \QQ\,1\oplus \QQ \left \{x_0^{s_1-1}x_1\cdots x_0^{s_k-1}x_1\in \QQ\langle x_0,x_1\rangle\,\Big|\, s_i\geq 1, k\geq 1\right\},
$$
equipped with the shuffle product $\ssha$. See also~\mcite{GX} where it is denoted by $\calh^{\ssha}_1(x_0,x_1)$ and~\mcite{GZ2} where it is denoted by $\calh^0_{\geq 1}$ and is shown to be the free nonunitary Rota-Baxter algebra of weight zero on one generator.
Then the linear map
$$ \frakf: \calh^{\ssha}_1 \to \calf_{ch}(\QQ), \quad x_0^{s_1-1}x_1\cdots x_0^{s_k-1}x_1 \mapsto \frakf\wvec{s_1,\cdots,s_k}{z_1,\cdots,z_k}$$
in~\cite[Theorem 2.1]{GX} together with the obvious bijection
$$\theta: \calh^{\ssha}_1 \to \QQ\dcch,
    x_0^{s_1-1}x_1\cdots x_0^{s_k-1}x_1 \mapsto
    [e_1+\cdots+e_k]^{s_1}\cdots [e_k]^{s_k}$$
gives the commutative diagram
\begin{equation}
 \xymatrix{ \calh^{\ssha} \ar^{\theta}[rrrr] \ar^{\frakf}[rrd] &&&& \QQ\dcch \ar_{\overline{\Phi}_{ch}}[lld] \\
&& \calf_{ch} &&
}
\mlabel{eq:shuf}
\end{equation}
of linear maps. Thus $\frakf$ is a bijection since $\theta$ and $\overline{\Phi}_{ch}$ are.

The shuffle product of the two multiple zeta fractions is induced by $\frakf$~\mcite{GX}. More precisely,
if the shuffle product in $\calh^{\ssha}_1$ is given by
$$ x_0^{s_1-1}x_1\cdots x_0^{s_k-1}x_1 \ssha x_0^{s_{k+1}-1}x_1\cdots x_0^{s_{k+\ell}-1}x_1 =\sum_{w\in
\calh^{\ssha}_1} \alpha_{x_0^{s_1-1}x_1\cdots x_0^{s_k-1}x_1,\, x_0^{s_{k+1}-1}x_1\cdots x_0^{s_{k+\ell}-1}x_1} ^w w,$$
then
$$ \frakf\wvec{s_1,\cdots,s_k}{z_1,\cdots,z_k} \ssha \frakf\wvec{s_{k+1},\cdots,s_{k+\ell}}{z_1,\cdots,z_{k+\ell}} :=\sum_{w\in
\calh^{\ssha}_1} \alpha_{x_0^{s_1-1}x_1\cdots x_0^{s_k-1}x_1,\, x_0^{s_{k+1}-1}x_1\cdots x_0^{s_{k+\ell}-1}x_1} ^w \frakf(w).$$

The shuffle product of two MZVs $\zeta(s_1,\cdots,s_k)$ and $\zeta(s_{k+1},\cdots,s_{k+\ell})$ is determined by the shuffle product of their corresponding multiple zeta fractions
$$ \frakf\wvec{s_1,\cdots,s_k}{z_1,\cdots,z_k}:= \frac{1}{(z_1+\cdots+z_k)^{s_1}\cdots z_k^{s_k}} \text{ and } \frakf\wvec{s_{k+1},\cdots,s_{k+\ell}}{z_1,\cdots,z_{k+\ell}} :=\frac{1}{(z_{k+1}+\cdots+z_{k+\ell})^{s_{k+1}}\cdots z_{k+\ell}^{s_{k+\ell}}}.$$

On the other hand, the decorated cone
$$[e_1+\cdots+e_k]^{s_1}\cdots [e_k]^{s_k}[e_{k+1}+\cdots+e_{k+\ell}]^{s_{k+1}}\cdots [e_{k+\ell}]^{s_{k+\ell}}$$
is  uniquely written as a linear combination of Chen cones
$$\sum_{C\in \dcch}
\alpha_{[e_1+\cdots+e_k]^{s_1}\cdots [e_k]^{s_k},[e_{k+1}+\cdots+e_{k+\ell}]^{s_{k+1}}\cdots [e_{k+\ell}]^{s_{k+\ell}}}^C C.$$
Indeed the inclusion/exclusion principle partitions the domain
$$P_{k,\ell} := \{x_1 \geq \cdots  \geq x_k \geq 0\} \times \{x_{k+1} \geq\cdots \geq x_{k+\ell} \geq 0\} $$
as follows
$$ P_{k,\ell} =\prod_{\sigma \in S_{k,\ell} }
P_\sigma,$$
where $S_{k,\ell}$ is the set of $(k,\ell)$-shuffles and the domain $P_\sigma$  is defined by:
$$P_\sigma = \left\{(x_1,\cdots , x_{k+\ell}) \,\Big| \, x_{\sigma(m)}\geq x_{\sigma(p)}\quad{\rm  if }\quad  m \geq p\quad{\rm  and\ } \sigma(m)\neq \sigma(p) \right\}.$$
So we have
$$[e_1+\cdots+e_k]\cdots [e_k][e_{k+1}+\cdots+e_{k+\ell}]\cdots [e_{k+\ell}] \equiv \sum_{\sigma\in S_{k,\ell}}
[e_{\sigma(1)}+\cdots+e_{\sigma(k+\ell)}]\cdots [e_{\sigma(k+\ell)}] \mod W_\cm.$$
The very definition of subdivisions of decorated cones  then yields
\begin{eqnarray*}
 &&[e_1+\cdots+e_k]^{s_1}\cdots [e_k]^{s_k}[e_{k+1}+\cdots+e_{k+\ell}]^{s_{k+1}}\cdots [e_{k+\ell}]^{s_{k+\ell}} \\
 &\equiv&
\sum_{\sigma\in S_{k,\ell}} \frac{1}{(s_1-1)!\cdots (s_{k+\ell}-1)!} D^{s_1-1}_{(e_1+\cdots+e_k)^*}\cdots D^{s_{k+\ell}-1}_{e_{k+\ell}^*} [e_{\sigma(1)}+\cdots+e_{\sigma(k+\ell)}]\cdots [e_{\sigma(k+\ell)}] \mod W_\dm.
\end{eqnarray*}

By means of the map $\overline{\Phi}_{Ch}$, this gives rise to another way of writing
$\frakf\wvec{s_1,\cdots,s_k}{z_1,\cdots,z_k} \ssha \frakf\wvec{s_{k+1},\cdots,s_{k+\ell}}{z_1,\cdots,z_{k+\ell}}$ as  a sum of other multiple zeta fractions. The multiple zeta fractions are linearly independent as a consequence, for instance, of  the bijectivity of $\overline{\Phi}_{Ch}$.  Thus the two linear combinations must be the same. More precisely,
$$ \alpha_{x_0^{s_1-1}x_1\cdots x_0^{s_k-1}x_1, \, x_0^{s_{k+1}-1}x_1\cdots x_0^{s_{k+\ell}-1}x_1} ^w
=\alpha_{[e_1+\cdots+e_k]^{s_1}\cdots [e_k]^{s_k},[e_{k+1}+\cdots+e_{k+\ell}]^{s_{k+1}}\cdots [e_{k+\ell}]^{s_{k+\ell}}}^{\theta(w)}.$$
Hence we have the following

\begin{theorem} The shuffle relation of multiple zeta fractions and hence of MZVs corresponds via $\overline{\Phi}_{Ch}$ to the subdivision relations of decorated Chen cones. Equivalently, the shuffle product in the shuffle algebra $\calh^{\ssha}_1$ corresponds via $\theta$ in Diagram~(\mref{eq:shuf}) to the subdivision relation of decorated closed Chen cones.
\mlabel{thm:ShuffleAsSubd}
\end{theorem}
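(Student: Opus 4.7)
My proof plan follows closely the structure already suggested by the commutative diagram~(\mref{eq:shuf}): I would establish the theorem by showing that the two natural operations yielding an element of $\calf_{ch}$ from a pair $\bigl(x_0^{s_1-1}x_1\cdots x_0^{s_k-1}x_1,\ x_0^{s_{k+1}-1}x_1\cdots x_0^{s_{k+\ell}-1}x_1\bigr)$---namely (i) apply the shuffle product in $\calh^{\ssha}_1$ and then push down by $\frakf$, versus (ii) apply $\theta$ to each factor, multiply in $\QQ\dcch$, reduce modulo $W_\dm$ to a sum of decorated Chen cones, and then push down by $\overline{\Phi}_{Ch}$---produce the same element. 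Because $\overline{\Phi}_{Ch}$ is a linear bijection (Theorem~\mref{thm:wiso} restricted to Chen cones, as observed via Lemma~\mref{lem:DecoratedCone}) and $\theta$ is an obvious bijection, matching the two outputs in $\calf_{ch}$ is equivalent to matching the coefficients $\alpha^w$ on both sides, which is the content of the theorem.

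The geometric heart of the argument is the undecorated case. I would first verify, by the inclusion/exclusion decomposition of the product domain $P_{k,\ell}$ into the shuffled Chen-type domains $P_\sigma$ for $\sigma\in S_{k,\ell}$ quoted just above the theorem, the identity
\begin{equation*}
[e_1+\cdots+e_k]\cdots [e_k]\cdot[e_{k+1}+\cdots+e_{k+\ell}]\cdots[e_{k+\ell}]
\ \equiv\ \sum_{\sigma\in S_{k,\ell}}[e_{\sigma(1)}+\cdots+e_{\sigma(k+\ell)}]\cdots[e_{\sigma(k+\ell)}] \pmod{W_\cm}.
\end{equation*}
The pairwise intersections of the cones $P_\sigma$ lie on facets of codimension $\geq 1$ (loci where two coordinates coincide), so the right-hand side is a genuine simplicial subdivision, and the relation is in $W_\cm$ by definition.

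The passage to decorations is then forced by Proposition~\mref{prop:DerCone}.(\mref{it:back}): decorated Chen cones are obtained from undecorated ones by applying the iterated conical derivations $\delta_{v_i^*}$, and the definition of algebraic subdivision (the $\delta_{v_i^*}^{s_i-1}$-image of a smooth subdivision of the underlying cone) propagates the undecorated identity to the decorated level. Applying $\overline{\Phi}_{Ch}$ and using Proposition~\mref{prop:DerFrac}.(\mref{it:DPhi}) to commute $\Phi_\dm$ with the derivations, I obtain a formula expressing the product $\frakf\wvec{s_1,\cdots,s_k}{z_1,\cdots,z_k}\cdot\frakf\wvec{s_{k+1},\cdots,s_{k+\ell}}{z_{k+1},\cdots,z_{k+\ell}}$ (suitably interpreted after aligning variables) as the $\frakf$-image of the shuffled strings on the right-hand side.

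The remaining step is to identify this expression with $\frakf\bigl(x_0^{s_1-1}x_1\cdots x_0^{s_k-1}x_1\bigr)\ssha \frakf\bigl(x_0^{s_{k+1}-1}x_1\cdots x_0^{s_{k+\ell}-1}x_1\bigr)$ as defined through the shuffle product on $\calh^{\ssha}_1$. This comparison reduces to the known fact from~\mcite{GX} that $\frakf$ transports $\ssha$ to the ordinary product of multiple zeta fractions; alternatively, both sides are characterized by the same integral representation on the simplex $x_1\geq\cdots\geq x_k\geq 0$ paired against the exponential kernel that recovers fractions via Laplace transform. Linear independence of multiple zeta fractions (which follows from the injectivity of $\overline{\Phi}_{Ch}$ together with the linear independence of decorated Chen cones shown via Lemma~\mref{lem:DecoratedCone}) then forces the coefficient identity $\alpha^w_{\cdot,\cdot}=\alpha^{\theta(w)}_{\cdot,\cdot}$. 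The main obstacle, as I see it, is bookkeeping: one must consistently rename the variables $z_i$ between the two factors so that the product $P_{k,\ell}$ is a subset of a single ambient $\RR^{k+\ell}$, and one must verify that the differentiation procedure commutes with this variable identification; once this is done carefully, the rest is formal manipulation using results already established.
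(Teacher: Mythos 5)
Your proposal matches the paper's own argument in all essentials: the inclusion/exclusion decomposition of $P_{k,\ell}$ into the shuffled Chen domains giving the undecorated relation modulo $W_\cm$, its propagation to decorated cones modulo $W_\dm$ by the conical derivations, the compatibility of derivations with $\Phi_\dm$, and finally the identification of coefficients via the result of \mcite{GX} together with the linear independence of multiple zeta fractions coming from the bijectivity of $\overline{\Phi}_{Ch}$ (Lemma~\mref{lem:DecoratedCone} and Theorem~\mref{thm:wiso}). Apart from minor bookkeeping points you already flag (the product of the two decorated Chen cones lives in $\QQ\dsmc$ rather than $\QQ\dcch$ before reduction, and the variable alignment), this is the same proof as in the paper.
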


\subsection{Double subdivision relations and double shuffle relations}
We now put the open subdivision relation and closed subdivision relation together to form the double subdivision relation for CZVs that generalizes the double shuffle relation for MZVs.

In order to relate the open and closed subdivision relations, we first relate the open and closed decorated cones. We show that they are essentially the transpose of each other.

\begin{defn}
\begin{enumerate}
\item Let $O(\ZZ)$ denote the set of $r\times r$ orthonormal matrices. Let $M\in O(\ZZ)$ and $\vec{s}:=(s_1,\dots,s_r)\in \ZZ^r_{\geq 0}$. Let $v_1,\cdots,v_r$ and $u_1,\cdots,u_r$ be the row and column vectors of $M$. The {\bf (decorated) cone pair} associated with $M$ and $\vec{s}$ is the pair $(C,D)$ consisting of the decorated open cone $C:=C_{M,\vec{s}}=(\ocone{u_1,\cdots,u_r},\vec{s})$ and the decorated closed cone $D:=D_{M,\vec s}=[v_1]^{s_1}\cdots [v_r]^{s_r}$. We call the pair convergent if the corresponding $\zeta$-values $\zeta^0(C)$ and $\zeta^c(D)$  converge.
\item
Let $\dtcp$ denote the set of cone pairs $(C_{M,\vec s},D_{M,\vec{s}})$ where $M\in O(\ZZ)$ and $\vec s\in \ZZ^r_{\geq 0}$. Let
$$p^o: \QQ \dtcp \to \QQ \doc
$$
and
$$p^c:\QQ \dtcp \to \QQ \dsmc
$$
denote the natural projections.
\end{enumerate}
\end{defn}
\begin{remark}
\begin{enumerate}
\item Let $I=\{i_1,\cdots,i_k\}\subseteq [r]$ be the support of $\vec{s}$, namely $I:=\{i\in \{1,\cdots,r\}\,|\, s_i\neq 0\}$. Then $[v_1]^{s_1}\cdots [v_r]^{s_r}=[v_{i_1}]^{s_{i_1}}\cdots [v_{i_k}]^{s_{i_k}}$.
\item
By definition, for a cone pair $(C,D)$, the vectors in $C$ are the column vectors of a matrix whose row vectors are the vectors in $D$. Thus $C$ can be regarded as a transpose of $D$ and will be denoted by $D^\tn$ even though such a $C$ is not unique for a given $D$.
For example, for the decorated closed cone $D:=[e_1]^2[e_1+e_2]=[e_1+e_2][e_1]^2$, we can choose $D^\tn=(\ocone{e_1+e_2,e_2};2,1)$ or $D^\tn=(\ocone{e_1+e_2,e_1};1,2)$.
\item
The central objects of study in this article are  open convex cones   $C=\sum\limits_{i=1}^r \RR_{> 0} v_i$  spanned by vectors $v_i\in \ZZ^k_{\geq 0}, 1\leq i\leq r$. Whereas the order of the vectors  does not play a role for  zeta functions associated with   cones, whether open or closed,  it implicitly does when bringing them together to prove double shuffle relations. We chose to adopt a geometric approach putting geometric  cones   in the forefront when ignoring the order of the generating vectors.  Another possible  and more algebraic focus would be to start off from framed cones, namely cones together with an ordered set of generating vectors, a point of view we intend to explore in a forthcoming paper.
\end{enumerate}
\end{remark}

\begin{lemma}
\begin{enumerate}
\item
The map $p^c$ is surjective. In other words, for any $D\in \dsmc$, there is a decorated open cone $D^\tn$ such that $(D^\tn,D)$ is a cone pair.
\mlabel{it:ocsur}
\item
For any cone pair $(C,D)\in \dtcp$, if $C\in \doc_0$ or $D\in \dsmc_0$, then we have
$$\zeta ^o(C)=\zeta ^c(D).
$$
\mlabel{it:oceq}
\item
Any LZV is a CZV.
\mlabel{it:oclc}
\end{enumerate}
\mlabel{lem:oc}
\end{lemma}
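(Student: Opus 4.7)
My plan is to treat the three assertions in the order stated and to reduce everything to the simple fact that a unimodular matrix $M$ sets up compatible bijections between two copies of $\mathbb{Z}^r$: one via the row basis (appearing in $\zeta^c$) and one via the column basis (appearing in $\zeta^o$).

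For part~(\ref{it:ocsur}), I start with $D=[v_1]^{s_1}\cdots[v_k]^{s_k}\in\dsmc$. By definition of a decorated smooth cone the primary set $\{v_1,\ldots,v_k\}$ is part of a $\mathbb{Z}$-basis of some $\mathbb{Z}^r$. I extend it to a full $\mathbb{Z}$-basis $v_1,\ldots,v_r$ and let $M$ be the matrix whose rows are these vectors, so $M$ is $\mathbb{Z}$-unimodular (in particular in $O(\ZZ)$ in the convention of the paper). Setting $\vec{s}\,'=(s_1,\ldots,s_k,0,\ldots,0)\in\ZZ^r_{\ge 0}$ yields $D=D_{M,\vec{s}\,'}$ (the padded zeros add trivial factors that drop out of the monomial), and I take $D^\tn:=C_{M,\vec{s}\,'}=\bigl(\ocone{u_1,\ldots,u_r},\vec{s}\,'\bigr)$ where $u_1,\ldots,u_r$ are the columns of $M$. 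Then $(D^\tn,D)\in\dtcp$ and $p^c(D^\tn,D)=D$.

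For the computational heart, part~(\ref{it:oceq}), I write $M=(a_{ij})$ so that $v_i=\sum_j a_{ij}e_j$ and $u_j=(a_{1j},\ldots,a_{rj})^T$. Because the columns $u_1,\ldots,u_r$ form a $\mathbb{Z}$-basis of $\mathbb{Z}^r$, any lattice point of $\ocone{u_1,\ldots,u_r}\cap\mathbb{Z}^r$ admits a \emph{unique} expression $\vec{n}=m_1u_1+\cdots+m_ru_r$ with $m_i\in\mathbb{Z}_{\ge 1}$ (the integrality of the $m_i$ uses unimodularity, the strict positivity uses openness). The $i$-th coordinate of $\vec{n}$ is exactly $n_i=\sum_j a_{ij}m_j$, which is the $i$-th linear form in the definition of $\zeta^c(D)$. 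Rewriting the defining sum of $\zeta^o(C)$ under the change of variables $\vec n\leftrightarrow(m_1,\ldots,m_r)$ therefore gives
$$\zeta^o(C)=\sum_{(m_1,\ldots,m_r)\in\mathbb{Z}^r_{\ge 1}}\prod_{i=1}^{r}\bigl(a_{i1}m_1+\cdots+a_{ir}m_r\bigr)^{-s_i}=\zeta^c(D),$$
where factors with $s_i=0$ collapse to $1$ via the $0^s=1$ convention on both sides. Since this is a term-by-term identification of two positive series, convergence of either immediately yields convergence of the other with equal limits.

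Part~(\ref{it:oclc}) is then a corollary: given an LZV $\zeta^c(D)$ with $D\in\dsmc_0$, use~(\ref{it:ocsur}) to produce $D^\tn$ and apply~(\ref{it:oceq}) to conclude $\zeta^c(D)=\zeta^o(D^\tn)$, exhibiting the LZV as a CZV attached to the open cone $D^\tn$. The only real subtlety to watch in the write-up is the bijection between lattice points of the open cone and strictly positive integer tuples in the $m_i$; this is where the smoothness (unimodular basis) hypothesis is used essentially, and every other step is bookkeeping about rows versus columns of $M$.
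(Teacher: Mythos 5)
Your argument is essentially the paper's own proof: extend the primary set to a unimodular basis, pad the decoration vector with zeros to form the cone pair for part (a), identify lattice points of the open cone with strictly positive integer tuples via the unimodular change of variables to get $\zeta^o(C)=\zeta^c(D)$ in part (b), and combine the two for part (c). If anything you are slightly more explicit than the paper about the row/column bookkeeping and about why convergence transfers between the two positive series, but the route is the same.
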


\begin{remark}
Even though a given decorated closed cone $D$ can have multiple decorated open cones $C$ such that $(C,D)$ is a cone pair, by Lemma~\mref{lem:oc}.(\mref{it:oceq}), these decorated open cones give the same CZV.
\end{remark}

\begin{proof}
(\mref{it:ocsur}) Let $D:=[v_1]^{s_1}\cdots [v_k]^{s_k}$ be in $\dsmc$. Then $v_1,\cdots,v_k$ is part of a $\ZZ$ basis $v_1,\cdots,v_k,\cdots,v_r$ of $\ZZ^r_{\geq 0}$. Let $M\in O_{r\times r}(\ZZ)$ be the matrix with $v_1,\cdots,v_r$ as row vectors and let $s_1,\cdots,s_r\in \ZZ_{\geq 0}$ with $s_{k+1}=\cdots =s_r=0$. Then
$C=\left( \langle v_1,\cdots, v_r\rangle, s_1,\cdots, s_r\rangle\right)$ and $D=[v_1]^{s_1}\cdots [v_k]^{s_k}$ build a  cone pair so that $[v_1]^{s_1}\cdots [v_k]^{s_k}$ lies in $\im\, p^c$ .
\smallskip

\noindent
(\mref{it:oceq})
Let $C=(\ocone{u_1,\cdots,u_r};s_1,\cdots, s_r)$ with $u_j=\sum_{i=1}^r a_{ij}e_i, 1\leq i\leq r$, then
$$ n_1e_1+\cdots +n_r e_r= m_1 u_1+\cdots m_r u_r
=  \sum_{j=1}^r \left(\sum_{i=1}^r a_{ij} m_i\right) e_j.$$
Thus $C\cap \ZZ_{\geq 0}^r \subseteq \{(m_1,\cdots,m_k)\,\|\, m_i\geq 1, 1\leq i\leq k\}$. Since the matrix $(a_{ij})$ lies in $O_{r\times r}(\ZZ)$, the inclusion in the other direction also holds.
Thus
\begin{eqnarray*}
\zeta^o(A) &=& \sum_{\vec{n}\in C^\tn\cap\ZZ^k_{\geq 0}} \frac{1}{n_1^{s_1}\cdots n_k^{s_r}} \notag \\
&=& \sum_{m_1,\cdots,m_r\geq 1}  \frac{1}{(\sum_{i=1}^r
c_{i1}m_i)^{s_1}\cdots (\sum_{i=1}^r c_{ir} m_i)^{s_r}}\\
&=&
\zeta^c(B).
\end{eqnarray*}

\noindent
(\mref{it:oclc}) follows from Item~(\mref{it:ocsur}) and Item~(\mref{it:oceq}).
\end{proof}

Let $\dtcp _0$   be the set of {\bf convergent cone pairs} $(C,D)$, i.e., with either $\zeta ^0(A)$ or $\zeta ^c(B)$ convergent. On the grounds of the above lemma, we have restricted  maps:
$$p^o: \QQ \dtcp_0 \to \QQ \doc_0,
$$
and
$$p^c:\QQ \dtcp_0 \to \QQ \dsmc_0
$$
with $p^c$ surjective. Here the subscript $0$ stands for the restriction to the set of cones for which the corresponding conical $\zeta$-values converge.

Thanks to Lemma~\mref{lem:oc}, we obtain a linear map
\begin{equation}
\zeta^c: \QQ\dsmc_0 \to \QQ\ocmzvset,
  \   D \mapsto  \zeta^c(D) =\zeta^o(C),
\mlabel{eq:ccencode}
\end{equation}
where $(C,D)$ is a cone pair.
The following result is immediate from Theorem~\mref{thm:wiso}.

\begin{theorem} Let $(C,D)$ be a convergent cone pair. Let $\{C_i\}_i$ be an open subdivision of the decorated open cone $C$ and let $\sum_j c_j D_j$ be a subdivision of the decorated closed cone $D$. Also let $D_j^\tn\in \doc$ be the transpose cone of $D^j$, that is, $(D_j^\tn,D_j)$ is a cone pair. Then
\begin{equation}
\sum_i C_i - \sum_j c_j D_j^\tn
\mlabel{eq:dbsubd}
\end{equation}
lies  in the kernel of $\zeta^o$.
\mlabel{thm:CSubd}
\end{theorem}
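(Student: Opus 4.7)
The plan is to combine three results already established in the paper: the open subdivision relation (Lemma~\mref{lem:subd}), the closed subdivision relation (Lemma~\mref{lem:CSubd}), and the cone pair identity $\zeta^o(C)=\zeta^c(D)$ for cone pairs $(C,D)$ (Lemma~\mref{lem:oc}.(\mref{it:oceq})). The theorem is essentially a direct synthesis of these three facts via the bridge provided by cone pairs.

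Concretely, I would first apply Lemma~\mref{lem:subd} to the open subdivision $\{C_i\}$ of $C$ to obtain
\[
\zeta^o(C)=\sum_i \zeta^o(C_i).
\]
Next, I would apply Lemma~\mref{lem:CSubd} to the algebraic subdivision $D\prec \sum_j c_j D_j$ of the decorated closed cone $D$ to obtain
\[
\zeta^c(D)=\sum_j c_j\,\zeta^c(D_j).
\]
Finally, using Lemma~\mref{lem:oc}.(\mref{it:oceq}), the cone pair $(C,D)$ yields $\zeta^o(C)=\zeta^c(D)$, and each cone pair $(D_j^\tn,D_j)$ yields $\zeta^o(D_j^\tn)=\zeta^c(D_j)$. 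Chaining these gives
\[
\sum_i \zeta^o(C_i)=\zeta^o(C)=\zeta^c(D)=\sum_j c_j\,\zeta^c(D_j)=\sum_j c_j\,\zeta^o(D_j^\tn),
\]
which, by linearity of $\zeta^o$, says exactly that $\sum_i C_i-\sum_j c_j D_j^\tn$ lies in $\ker\zeta^o$.

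The only point that requires some care, and which I expect to be the main subtlety, is to justify that all the partial zeta values appearing above actually converge, so that $\zeta^o$ and $\zeta^c$ may indeed be evaluated on the summands and not merely on $C$ and $D$. For the pieces $C_i$ of the open subdivision this follows from the fact that $C_i\cap\ZZ^k\subseteq C\cap\ZZ^k$ and the summand $1/n_1^{s_1}\cdots n_k^{s_k}$ is nonnegative, giving $\zeta^o(C_i)\leq \zeta^o(C)<\infty$. For the summands $D_j$ of the algebraic subdivision the same positivity argument used in the proof of Lemma~\mref{lem:CSubd} (the expansion $\Phi_\cm(D)=\sum_j c_j\Phi_\cm(D_j)$ is a sum of positive fractions) yields convergence of each $\zeta^c(D_j)$, and then the cone pair identity transports this to convergence of $\zeta^o(D_j^\tn)$. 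Once convergence is secured, the algebraic identity above holds term by term, completing the proof.
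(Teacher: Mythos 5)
Your proof is correct and follows essentially the same route as the paper: the paper likewise combines Lemma~\mref{lem:subd}, Lemma~\mref{lem:CSubd} and Lemma~\mref{lem:oc} (both the existence of the transposes $D_j^\tn$ and the identity $\zeta^o=\zeta^c$ on cone pairs) to chain $\sum_i\zeta^o(C_i)=\zeta^o(C)=\zeta^c(D)=\sum_j c_j\zeta^o(D_j^\tn)$. Your extra care about convergence of the individual summands is a welcome precision; the paper leaves it implicit, relying on the positivity arguments already contained in the cited lemmas.
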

A linear combination in Eq.~(\mref{eq:dbsubd}) is called a {\bf double subdivision relation} and  we shall see that it generalizes the usual double shuffle relation.

\begin{proof}
By Lemma~\mref{lem:subd}, for the open subdivision $\{C_i\}_i$ of the decorated open cone $C$, we have
\begin{equation}
\zeta^o(C)=\sum_{i} \zeta^o(C_i).
\mlabel{eq:sumo}
\end{equation}
On the other hand, for the closed subdivision $\sum_{j}a_j D_j$ of the decorated closed cone $D$, by Lemma~\mref{lem:CSubd}, we have
\begin{equation}
\zeta^c(D)=\sum_j a_j \zeta^c(D_j).
\mlabel{eq:sumc}
\end{equation}
Then by Lemma~\mref{lem:oc}.(\mref{it:ocsur}), there are open decorated cones $D_j^\tn$ such that $(D_j^\tn,D_j)$ is in $\dtcp_0$. Further by Lemma~\mref{lem:oc}.(\mref{it:oceq}), we also have
$\zeta^c(D)=\zeta^o(C)$ and $\zeta^c(D_j)=\zeta^o(D_j^\tn)$. Therefore combining with Eqs.~(\mref{eq:sumo}) and (\mref{eq:sumc}) we obtain
$$\sum_i\zeta^o(C_i)=\zeta^o(C)=\sum_ja_j \zeta^o(D_j^\tn),$$
as needed.
\end{proof}

In summary, we have the following commutative diagram where the subscript $0$ stands for restricting to cones of which the corresponding zeta values are convergent. The inner triangle gives the double shuffle relation. There $\calh^\ast_0$ is the quasi-shuffle algebra defined in Eq.~(\mref{eq:qsh}).
The left part of the diagram is from Eq.~(\mref{eq:osub}).
The commutativity of the outer triangle follows from Lemma~\mref{lem:oc}. The map $\zeta^{\ssha}$ is the usual shuffle encoding of MZVs, here expressed as the composition of $\theta$ with the free summation : $$\zeta^{\ssha}(x_0^{s_1-1}x_1\cdots x_0^{s_k-1}x_1)=
\sum_{m_1,\cdots,m_k\geq 1} \frac{1}{(m_1+\cdots+m_k)^{s_1}\cdots m_k^{s_k}}=\zeta(s_1,\cdots,s_k).$$
The map $\tn$ is well-defined on $\QQ\dcch$ from the standard form of Chen cones:
$$\tn ( [e_{\sigma(1)}+\cdots+e_{\sigma(n)}]^{s_1}\cdots [e_{\sigma(n)}]^{s_n}) =(\ocone{e_{\sigma (1)},\cdots, e_{\sigma (1)}+\cdots+e_{\sigma(n)}};s_{1},\cdots, s_{n}).$$

\nc{\domc}{\mathcal{DMC}^o}  
\nc{\dcmc}{\mathcal{DMC}^c}  
\nc{\docc}{\mathcal{DCh}^o} 
\nc{\dccc}{\mathcal{DCh}^c} 

\begin{equation}
\xymatrix{
\QQ\doc_0 \ar@/_3pc/[rrrddddd]^{\zeta^o} &&& \QQ\dtcp_0 \ar[lll]_{p^o} \ar[rrr]^{p^c} &&& \ \QQ\dcmc_0  \ar@/^3pc/[lllddddd]_{\zeta^c}\\
&& \QQ\docc_0 \ar@{_{(}->}[ull] \ar@{>->>}[d] && \ \QQ\dccc_0 \ar@{>->>}[ll]_{\tn} \ar@{^{(}->}[urr] \ar@{>->>}[d]&& \\
&& \calh^\ast_0 \ar[ddr]^{\zeta^*} && \ \calh^{\ssha}_0 \ar@{>->>}[ll]_{\eta} \ar[ddl]_{\zeta^{\ssha}} &&\\
&&&&&&\\
&&& \QQ\,\mzvset \ar@{_{(}->}[d] &&&\\
&&& \QQ\,\ocmzv &&&
}
\mlabel{eq:dsub}
\end{equation}
In Corollary~\mref{co:surj} we will prove the surjectivity of $\zeta^o$ and $\zeta^c$, as in the case of MZVs.

\begin{defn}
{\rm
For any not necessarily convergent cone pair $(C,D)$, let $\{C_i\}$ be a subdivision of $C$ and $\sum_j a_jD_j$ a subdivision of $D$. If $\sum_{i}C_i-\sum_j a_j D_j^\tn$ is in $\QQ\domc_0$, then it is called an {\bf extended double subdivision relation}.
}
\mlabel{de:dsubd}
\end{defn}

\begin{conjecture}
The kernel of $\zeta^o$ is the subspace $I_{EDS}$ of $\QQ\domc$ generated by the extended double subdivision relations.
\end{conjecture}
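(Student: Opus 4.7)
The plan splits naturally into two inclusions. The inclusion $I_{EDS} \subseteq \ker \zeta^o$ extends Theorem~\mref{thm:CSubd} from convergent cone pairs to the setting where individual decorated cones on each side may diverge but the relation as a whole is convergent. The reverse inclusion $\ker \zeta^o \subseteq I_{EDS}$ specializes, along the Chen diagonal of Diagram~(\mref{eq:dsub}), to the Extended Double Shuffle Conjecture of Ihara--Kaneko--Zagier and is therefore the main obstacle: a complete proof is out of reach by current methods.

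For the easy inclusion, I would fix a generator $R = \sum_i C_i - \sum_j a_j D_j^\tn \in \QQ\domc_0$ of $I_{EDS}$, arising as in Definition~\mref{de:dsubd} from a cone pair $(C,D)$, an open subdivision $\{C_i\}$ of $C$, and an algebraic subdivision $\sum_j a_j D_j$ of $D$. If $(C,D)$ itself is convergent then Theorem~\mref{thm:CSubd} applies directly and gives $\zeta^o(R)=0$. In the divergent case, the strategy is to pass to a common refinement of the open subdivision $\{C_i\}$ and of the transposed closed subdivision $\{D_j^\tn\}$ using Lemma~\mref{lem:ConeSubd}, then regroup $R$ as a $\QQ$-linear combination of convergent double subdivision relations of the type covered by Theorem~\mref{thm:CSubd}. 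The hypothesis $R \in \QQ\domc_0$ is precisely what guarantees that the divergent contributions from the two sides pair up and cancel inside each block of the regrouping, so that the lemma applies termwise and gives $\zeta^o(R)=\sum_j a_j\,\zeta^c(D_j)-\sum_i\zeta^o(C_i)=0$.

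For the hard inclusion $\ker \zeta^o \subseteq I_{EDS}$, the natural strategy is a two-step reduction. First, using the barycentric-type subdivisions of Proposition~\mref{pp:ConeToSim} and the smoothings of Lemma~\mref{lem:ConeSubd}.(\mref{it:smoothsub}), one would reduce any element of $\QQ\domc_0$, modulo $I_{EDS}$, to a $\QQ$-linear combination of decorated open Chen cones, thereby factoring the problem through $\calh^\ast_0$ in Diagram~(\mref{eq:dsub}). This step requires showing that every finite collection of open and closed subdivisions of a general rational cone can be traced back, by successive open and closed subdivision moves, to shuffle/quasi-shuffle decompositions of Chen cones, while keeping track of how edge decorations are distributed under the differentiation procedure of Proposition~\mref{prop:DerCone}. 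Second, one would invoke the Extended Double Shuffle Conjecture to identify $\ker \zeta^\ast$ with the extended double shuffle ideal in $\calh^\ast_0$ and then transport that ideal back into $I_{EDS}$ using the surjectivity of the vertical maps in Diagram~(\mref{eq:dsub}) (cf.\ Corollary~\mref{co:surj}).

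The hard part will clearly be this second inclusion. Even granting EDS as a black box, the first reduction step is substantial because one must verify that the "geometric" relations produced by arbitrary cone subdivisions add nothing beyond the Chen-cone relations modulo $I_{EDS}$; this is essentially a statement about the combinatorial completeness of the subdivision calculus relative to the shuffle and quasi-shuffle calculi. Consequently, a realistic intermediate goal would be a \emph{conditional} theorem: assuming EDS, prove $\ker \zeta^o = I_{EDS}$ by establishing only the first step. A complementary direction, mirroring the motivic approach to MZVs, would be to realize conical zeta values as periods of an appropriate family of mixed Tate motives and read off the conjecture from the structure of the associated motivic Galois group; this seems to be the only approach with a chance of resolving the arithmetic obstruction simultaneously with the geometric one.
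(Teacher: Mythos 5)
The statement you are addressing is stated in the paper as a \emph{conjecture}: the paper offers no proof of it, and indeed immediately after formulating it the authors only remark that its restriction to Chen cones would be equivalent to (and evidence for) the Double Shuffle Conjecture. So there is nothing in the paper to compare your argument against, and your own text concedes that the inclusion $\ker \zeta^o \subseteq I_{EDS}$ contains the Extended Double Shuffle Conjecture as a special case and is out of reach; what you have written is a research plan, not a proof, and it should be presented as such.

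Beyond that, the one part you label as ``easy'' is not established by your sketch. Theorem~\mref{thm:CSubd} only covers \emph{convergent} cone pairs, while the extended relations of Definition~\mref{de:dsubd} are precisely those where the pair $(C,D)$ may be divergent and only the combination $\sum_i C_i - \sum_j a_j D_j^\tn$ lands in $\QQ\doc_0$. Your proposed fix --- take a common refinement of the open subdivision $\{C_i\}$ and the transposed closed subdivision $\{D_j^\tn\}$ via Lemma~\mref{lem:ConeSubd} and regroup into convergent blocks --- is not justified: the two subdivisions live on different objects (an open cone and the transposes of closed decorated cones, which are not even canonically determined), and convergence of the total sum does not force the divergent contributions to cancel blockwise after refinement. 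In the MZV prototype this is exactly the content of the regularization theorem of Ihara--Kaneko--Zagier, which requires genuine analytic input (regularized values and the comparison map between the two regularizations), not a combinatorial regrouping; correspondingly, the paper explicitly defers the treatment of divergent conical zeta values to a forthcoming work. So even the forward inclusion $I_{EDS}\subseteq \ker\zeta^o$ remains open as you have argued it, and an honest statement of your plan should flag this step as requiring a renormalization/regularization theory for divergent CZVs rather than following from Theorem~\mref{thm:CSubd}.
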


In view of the Double Shuffle Conjecture for MZVs, we also make the following
\begin{conjecture}
The intersection $I_{EDS}\cap \QQ\doch$, identified with a subset of $\calh^\ast_0$ by the bijection $\QQ\doch\to \calh^\ast_0$, is the extended double shuffle ideal $I_{EDS}$ of $\calh^\ast_0$.
\end{conjecture}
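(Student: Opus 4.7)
The plan is to establish two inclusions between ideals:
(i) every generator of the extended double shuffle ideal $I_{EDS}\subseteq \calh^\ast_0$ comes, under the bijection $\calh^\ast_0 \cong \QQ\doch$, from an extended double subdivision relation; and
(ii) conversely, every extended double subdivision relation whose cone-pair support lies in $\QQ\doch$ already corresponds to an element of the extended double shuffle ideal.

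For inclusion (i), one would use the two interpretation theorems proved in the paper. Fix $w_1 = z_{s_1}\cdots z_{s_k}$ and $w_2 = z_{s_{k+1}}\cdots z_{s_{k+\ell}}$ in $\calh^\ast_0$. Theorem~\ref{thm:stsub} rewrites $w_1 \ast w_2$ as the open subdivision of an open Chen cone indexed by $St_{k,\ell}$, while Theorem~\ref{thm:ShuffleAsSubd} rewrites $\eta^{-1}(\eta(w_1)\ssha \eta(w_2))$ as the algebraic subdivision of the corresponding decorated closed Chen cone obtained from $(k,\ell)$-shuffles. Since every open Chen cone is the transpose of the matching decorated closed Chen cone in the sense of Lemma~\ref{lem:oc}, the difference $w_1\ast w_2 - \eta^{-1}(\eta(w_1)\ssha\eta(w_2))$ fits the shape of $\sum_i C_i - \sum_j a_j D_j^\tn$ in Theorem~\ref{thm:CSubd} and lies in $\QQ\doch_0$ when convergent, or in the extended version of Definition~\ref{de:dsubd} when not. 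This gives $I_{EDS}\subseteq I_{EDS}\cap \QQ\doch$.

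For inclusion (ii), the strategy would be to filter $\QQ\doc$ by weight and argue inductively. At each weight one takes an element of $I_{EDS}\cap \QQ\doch$, writes it as a finite sum of generators $\sum_i C_i - \sum_j a_j D_j^\tn$ coming from cone pairs $(C,D)$, and tries to reduce modulo the shuffle–stuffle generators produced in (i) until the residual cone pairs involve only Chen cones on both sides. At that point Theorem~\ref{thm:stsub} forces the open subdivisions to come from stuffle summands and Theorem~\ref{thm:ShuffleAsSubd} forces the closed subdivisions to come from shuffle summands, producing an element of $I_{EDS}\subseteq \calh^\ast_0$. Here one leverages the linear independence of simple (Chen) fractions from Theorem~\ref{thm:wiso}, together with the uniqueness of primary generating sets of smooth cones, to ensure that the reduction terminates and that cancellations among non-Chen pieces are controlled.

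The main obstacle is inclusion (ii). The subtlety is that non-Chen cone pairs can, after combining open and closed subdivision relations, conspire to produce an element supported entirely on Chen cones: for example the open subdivision $\ocone{e_1,e_2} = \ocone{e_1,e_1+e_2}\sqcup \ocone{e_2,e_1+e_2}\sqcup\ocone{e_1+e_2}$ already produces a Chen-cone identity starting from a non-Chen cone. Proving that every such coincidence is captured by the shuffle–stuffle relations seems to require either a cellular decomposition of the chain complex of polyhedral subdivisions of the first orthant, compatible with both the open and the decorated closed subdivision structures, or an explicit rewriting algorithm that normalizes any extended double subdivision relation towards its Chen-support while only introducing shuffle–stuffle generators as correction terms. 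This reduction mechanism appears genuinely difficult and is presumably the reason the statement is formulated as a conjecture rather than a theorem, in parallel with the status of the classical extended double shuffle conjecture for MZVs.
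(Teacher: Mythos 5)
This statement is one of the paper's concluding conjectures: the paper gives no proof of it, and your proposal does not constitute one either --- the decisive step is exactly the one you flag as unresolved. Your inclusion (ii), that every extended double subdivision relation supported on decorated Chen cones is already a consequence of the shuffle--stuffle generators, \emph{is} the content of the conjecture, and the tools you invoke cannot close it. Theorem~\mref{thm:wiso} and Lemma~\mref{lem:DecoratedCone} give linear independence of (Chen) fractions, hence uniqueness of expansions modulo $W_\dm$, but they say nothing about which elements of $\QQ\doch$ can arise as differences $\sum_i C_i-\sum_j a_j D_j^\tn$ when the cone pairs $(C,D)$ and the subdivision pieces are allowed to leave the Chen family; your own example $\ocone{e_1,e_2}=\ocone{e_1,e_1+e_2}\sqcup\ocone{e_2,e_1+e_2}\sqcup\ocone{e_1+e_2}$ shows non-Chen data does produce Chen-supported identities, and no normal form, termination argument, or compatible cellular structure is given or currently known. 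Indeed the statement is intertwined with the classical conjecture: combined with the paper's companion conjecture on $\ker\zeta^o$ it would imply the extended double shuffle conjecture for MZVs, and conversely deducing it from known results would require at least the double shuffle conjecture together with the (also unproven here) fact that extended double subdivision relations lie in $\ker\zeta^o$. So one should not expect the rewriting mechanism you sketch to be achievable by elementary means; this is precisely why the paper states the result as a conjecture.

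Even your inclusion (i), which you treat as routine, has gaps as written. First, the extended double shuffle ideal of $\calh^\ast_0$ is an \emph{ideal}, whereas the cone-side object is only the linear span of extended double subdivision relations; matching the generators $w_1\ast w_2-\eta^{-1}(\eta(w_1)\ssha\eta(w_2))$ via Theorems~\mref{thm:stsub} and~\mref{thm:ShuffleAsSubd} does not yet show that stuffle multiples $u\ast\big(w_1\ast w_2-\eta^{-1}(\eta(w_1)\ssha\eta(w_2))\big)$ again lie in that span intersected with $\QQ\doch$, so a closure argument (e.g.\ compatibility of the span with products of cones) is missing. Second, for the extended generators with $w_1=z_1$ the relevant cone pair is divergent: one must verify that the divergent open pieces and the divergent transposed closed pieces cancel so that the difference lands in the convergent span as required by Definition~\mref{de:dsubd}; since Theorem~\mref{thm:CSubd} is proved only for convergent cone pairs and Lemma~\mref{lem:oc} is applied only where one of the zeta values converges, this bookkeeping is not supplied by the paper and would have to be carried out explicitly, in analogy with the regularization step of Ihara--Kaneko--Zagier.
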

Verifying this conjecture would provide evidence for the Double Shuffle Conjecture.

\subsubsection{Examples}
We demonstrate the utility of double subdivision relations by some examples. The first example illustrates how the double shuffle relation of MZVs can be obtained when subdivisions of open and closed cones are used in place of quasi-shuffles and shuffles.

\begin {exam}
Consider the cone pair $(C,D)$ associated with $M=I_{2\times 2}\in O_{2\times 2}(\ZZ)$ and $\vec s=(2,2)$. Then $C=(\ocone{e_1,e_2};2,2)$ and $D=[e_1]^2[e_2]^2$.
From the open subdivision
$$\ocone{e_1,e_2}=\ocone{e_1, e_1+e_2}\sqcup\ocone{e_2, e_1+e_2}\sqcup \ocone{e_1+e_2},$$
we obtain
\begin{eqnarray*}
\zeta(2)\zeta(2)&=&\zeta^o(\ocone{e_1,e_2};2,2)\\ &=&\zeta^o(\ocone{e_1, e_1+e_2};2,2)+\zeta^o(\ocone{e_2, e_1+e_2};2,2)+\zeta^o(\ocone{e_1+e_2};2,2)\\
&=&2\zeta (2,2)+\zeta(4),
\end{eqnarray*}
giving the quasi-shuffle relation for $\zeta (2)\zeta (2)$.

On the other hand, we have
$$\zeta^c([e_1]^2[e_2]^2)=\zeta (2)\zeta(2).
$$
Differentiating the subdivision
$$ [e_1][e_2]\prec [e_1][e_1+e_2]+[e_2][e_1+e_2]$$
we have the decorated subdivision
$$[e_1]^2[e_2]^2 \prec [e_1]^2[e_1+e_2]^2+2[e_1][e_1+e_2]^3+2[e_2][e_1+e_2]^3+[e_2]^2[e_1+e_2]^2.
$$
Applying $\zeta^c$ we obtain the shuffle relation
$$\zeta (2)\zeta (2)=4\zeta (3,1)+2\zeta (2,2).
$$
Altogether, we recover the double shuffle relation
$\zeta(4)=4 \zeta(3,1).$
\end{exam}

The next example provides an alternative way to apply the double subdivision relation to get a double shuffle relation of MZVs, that bypasses the stuffle and shuffle products in that there is no presence of products of MZVs. Compare with the previous example.

\begin {exam}{\rm Consider the cone pair $((\ocone{e_1,e_1+e_2};2,2), [e_1+e_2]^2[e_2]^2)$ associated with  the matrix $\left(\begin{array}{cc} 1& 1\\ 0&1\end{array}\right)$ and $\vec s= (2,2)$.
The open cone subdivision
$$\ocone{e_1,e_1+e_2} = \ocone{e_1,2e_1+e_2}\sqcup \ocone{2e_1+e_2,e_1+e_2}\sqcup \ocone{2e_1+e_2}$$
gives the following subdivision ofthe  decorated open cone
\begin{equation}
(\ocone{e_1,e_1+e_2};2,2) \prec(\ocone{e_1,2e_1+e_2};2,2) +(\ocone{2e_1+e_2,e_1+e_2};2,2) +(\ocone{2e_1+e_2};2,2).\mlabel{eq:eg3}
\end{equation}
On the other hand, the closed cone subdivision $[e_1+e_2][e_2]\prec [e_1+e_2][e_1+2e_2]+[e_1+2e_2][e_2]$ induces the following subdivision of the decorated closed cone
$$[e_1+e_2]^2[e_2]^2\prec [e_1+e_2]^2[e_1+2e_2]^2+2[e_1+e_2][e_1+2e_2]^3+[e_2]^2[e_1+2e_2]^2+2[e_2][e_1+2e_2]^3.
$$
Taking the transposes of the right hand side we have
$$(\ocone{e_1+e_2, 2e_1+e_2};2,2)
+2(\ocone{e_1+e_2, 2e_1+e_2};3,1)
+(\ocone{e_1,2e_1+e_2};2,2)+2(\ocone{e_1,2e_1+e_2};3,1).
$$
Combining with Eq.~(\mref{eq:eg3}) we obtain the double subdivision relation
$$(\ocone{2e_1+e_2};2,2)-2(\ocone{e_1+e_2, 2e_1+e_2};3,1) -2(\ocone{e_1,2e_1+e_2};3,1).$$
Then by Theorem~\mref{thm:CSubd}), we have
$$\zeta^o(\ocone{2e_1+e_2};2,2)=2\zeta^o(\ocone{e_1+e_2, 2e_1+e_2};3,1)+2\zeta^o(\ocone{e_1,2e_1+e_2};3,1).$$
Since $\zeta^o(\ocone{2e_1+e_2},(2,2))=\frac{1}{4}\zeta(4)$ and
\begin{eqnarray*}
&&\zeta^o(\ocone{e_1+e_2, 2e_1+e_2};3,1)+\zeta^o(\ocone{e_1,2e_1+e_2};3,1)\\
&=&
\zeta^o(\ocone{e_1,e_1+e_2};3,1) -\zeta^o(\ocone{2e_1+e_2};3,1)\\
&=& \zeta(3,1)-\zeta(4),
\end{eqnarray*}
we derive the formula
$\zeta (4)=4\zeta (3,1)$
in a way that differs  from the double shuffle approach.
}
\end{exam}

\medskip
The next example shows that double subdivision relations can give relations between MZVs and CZVs.

\begin {exam} {\rm
We give a double subdivision relation of the cone pair $$((\ocone{e_1,e_1+e_2};2,1),[e_1+e_2]^2[e_2]).$$
An subdivision of the  open cone $\ocone{e_1,e_1+e_2}$ yields
\begin{equation}
(\ocone{e_1,e_1+e_2};2,1)
\prec (\ocone{e_1, 2e_1+e_2};2,1)
+(\ocone{2e_1+e_2, e_1+e_2};2,1) +(\ocone{2e_1+e_2};2,1).
\mlabel{eq:eg2}
\end{equation}

On the other hand, we deduce from the subdivision $$[e_1+e_2][e_2]\prec[e_1+2e_2][e_2]+[e_1+e_2][e_1+2e_2]$$
the following subdivision for the decorated closed cone $[e_1+e_2]^2[e_2]$
$$[e_1+e_2]^2[e_2]\prec [e_1+2e_2]^2[e_2]+[e_1+e_2]^2[e_1+2e_2]+[e_1+2e_2]^2[e_1+e_2].
$$
Taking the transposes we get
$$(\ocone{e_1, 2e_1+e_2};2,1) +(\ocone{e_1+e_2, e_1+2e_2};2,1)+(\ocone{e_1+e_2, 2e_1+e_2};2,1).
$$
Combining with Eq.~(\mref{eq:eg2}) we obtain the double subdivision relation
$$
(\ocone{2e_1+e_2};2,1)-(\ocone{e_1+e_2, e_1+2e_2};2,1).
$$
By Theorem~\mref{thm:CSubd}, we have
$$\zeta^o(\ocone{2e_1+e_2};2,1)=\zeta^o(\ocone{e_1+e_2, e_1+2e_2};2,1).$$
Noting that
$\zeta^o(\ocone{2e_1+e_2},(2,1))=\frac{1}{4}\zeta(3)$. Then we obtain the following analog of the Euler sum formula.
$$\zeta (3)=4\zeta^o(\ocone{e_1+e_2, e_1+2e_2};2,1).
$$
}
\end{exam}

\subsection{Conical zeta values and Shintani zeta values}
\mlabel{ss:shin}
We show that CZVs span the same space as the space of Shintani zeta values.

\subsubsection{Fractions and smooth fractions}

\begin {prop}
\begin{enumerate}
\item
Any fraction of the form $\frac{1}{L_1^{s_1}\cdots L_k^{s_k}}$ where $L_1,\cdots,L_k$ are linear forms in $\deff [\vec{z}]$ is a linear combination of pure fractions.
\mlabel{it:genfraca}
\item
Any fraction of the form $\frac{1}{L_1^{s_1}\cdots L_k^{s_k}}$ where $L_1,\cdots,L_k$ are linear forms in $\deff [\vec{z}]$ with non-negative coefficients is a positive linear combination of pure fractions whose linear forms have positive non-negative coefficients.
\mlabel{it:genfracb}
\end{enumerate}
\mlabel {pp:GenFrac}
\end{prop}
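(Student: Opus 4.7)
My plan for part (a) is a double induction on the pair $(k,\sum_{j<k}s_j)$ in lexicographic order, where $k$ denotes the number of distinct linear factors in the denominator. The base case $k=1$ is immediate since $\frac{1}{L^s}$ is already pure. For $k\geq 2$, if $L_1,\ldots,L_k$ are linearly independent then the fraction is itself pure. Otherwise, after relabeling we may write $L_k=\sum_{j<k}a_j L_j$ with at least one $a_j\neq 0$. Multiplying numerator and denominator by $L_k$ and substituting the relation produces the key identity
\[
\frac{1}{L_1^{s_1}\cdots L_k^{s_k}} \;=\; \frac{L_k}{L_1^{s_1}\cdots L_k^{s_k+1}} \;=\; \sum_{j:\,a_j\neq 0}\frac{a_j}{L_1^{s_1}\cdots L_j^{s_j-1}\cdots L_k^{s_k+1}}.
\]
In each summand either $s_j=1$, in which case $L_j$ is deleted and the remaining $k-1$ forms are linearly independent (because $a_j\neq 0$ in the relation), giving a pure fraction directly; or $s_j\geq 2$, in which case the same $k$ distinct forms remain but $\sum_{i<k}s_i$ has strictly decreased, and the inner induction applies.

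For part (b) the same outline works provided the chosen relation $L_k=\sum a_j L_j$ has non-negative coefficients $a_j\geq 0$, since then the decomposition above inherits positivity. Such a non-negative relation exists precisely when some $L_{i_0}$ lies in the convex cone spanned by the remaining $L_j$'s, i.e. is not an extreme ray of $\langle L_1,\ldots,L_k\rangle\subseteq\RR^n_{\geq 0}$. When every $L_i$ happens to be an extreme ray of this cone (so no direct non-negative relation is available among them), the plan is to subdivide $\langle L_1,\ldots,L_k\rangle$ into simplicial sub-cones using Proposition~\mref{pp:ConeToSim}; the Steiner points that appear are non-negative combinations of the $L_i$'s and therefore themselves non-negative linear forms. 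Adjoining these auxiliary forms to the collection makes at least one of the original $L_i$'s non-extreme in the enlarged cone, after which the reduction proceeds positively and the induction terminates.

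The main obstacle will be controlling signs in part (b) when auxiliary Steiner forms must be introduced: one must ensure that the chosen subdivision is compatible with the partial-fraction identity so that sign cancellations do not reappear at later stages of the induction. Making this rigorous is most naturally done geometrically, interpreting the reduction through the cone--fraction correspondence of Theorem~\mref{thm:cfbij} so that each step of the induction corresponds to a genuine simplicial subdivision inside $\RR^n_{\geq 0}$; this automatically produces pure fractions whose linear forms have non-negative coefficients and whose combining coefficients are positive.
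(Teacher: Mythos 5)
Your part (\mref{it:genfraca}) is essentially the paper's own argument: the identity you use (multiply numerator and denominator by the dependent form and expand it along the dependence relation) is the paper's Eq.~(\mref{eq:lind2}), only with different bookkeeping --- the paper inducts on the defect $d(f)=k-\rk\{L_1,\cdots,L_k\}$, you on the pair $(k,\sum_{j<k}s_j)$ in lexicographic order. One misstatement: when $s_j=1$ and $L_j$ drops out, the remaining $k-1$ forms need \emph{not} be linearly independent (deleting one member of a dependent family removes only one relation), so that summand is not in general a pure fraction ``directly''; this is harmless, since your outer induction on $k$ still applies to it.

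The genuine gap is in part (\mref{it:genfracb}), exactly in the case you flag: the $L_i$ are dependent but none is a non-negative combination of the others. Your remedy --- adjoin Steiner/barycentric generators from a simplicial subdivision of $\sigma:=\deff_{\geq 0}L_1+\cdots+\deff_{\geq 0}L_k$ so that some original $L_i$ becomes non-extreme --- cannot work: every auxiliary form you adjoin lies in $\sigma$, and an element spanning an extreme ray of $\sigma$ can never be written as a non-negative combination of elements of $\sigma$ off that ray, so adjoining points of the cone never destroys extremality. Concretely, take $L_1=z_1+z_2$, $L_2=z_3$, $L_3=z_1$, $L_4=z_2+z_3$, with $L_1+L_2=L_3+L_4$: all four span extreme rays of $\sigma$, and no collection of elements of $\sigma$ adjoined to the family makes any $L_i$ a non-negative combination of the rest. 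The appeal to Theorem~\mref{thm:cfbij} does not repair this: that correspondence concerns \emph{simple} fractions, i.e.\ products of linearly independent forms, and a subdivision of $\sigma$ produces a relation among fractions of degree $\dim\sigma$, not of degree $k$, so it says nothing about decomposing the dependent product $1/(L_1^{s_1}\cdots L_k^{s_k})$ --- which is precisely what is to be proved.

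The paper closes this case with a device your sketch lacks. If the dependence $L_{i_{\ell+1}}=\sum_{j=1}^{\ell}a_jL_{i_j}$ has a negative coefficient, say $a_1<0$, it applies the two-term identity
\begin{equation*}
\frac{1}{L_{i_1}L_{i_{\ell+1}}}=\frac{1}{L_{i_1}\,\bigl(-a_1L_{i_1}+L_{i_{\ell+1}}\bigr)}+\frac{-a_1}{\bigl(-a_1L_{i_1}+L_{i_{\ell+1}}\bigr)\,L_{i_{\ell+1}}},
\end{equation*}
whose coefficients are positive and which introduces the non-negative form $-a_1L_{i_1}+L_{i_{\ell+1}}=\sum_{j\geq 2}a_jL_{i_j}$; this new dependence has one fewer negative coefficient. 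A secondary induction on $\delta(f)$, the minimal number of negative coefficients occurring in such a dependence, then reduces to the all-nonnegative case, where the elimination step of part (\mref{it:genfraca}) runs with positive coefficients. Without this identity and this measure, your induction has no termination argument in the extreme-ray case, and the route you propose (forcing an original form to become non-extreme) is a dead end.
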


\begin {proof}
(\mref{it:genfraca})
By the unique factorization in $\deff[\vec{z}]$, we can uniquely write such a fraction as $f=\frac{a}{L_1^{s_1}\cdots L_k^{s_k}}$, where the $L_i$'s are not multiples of one another. We then denote $d:=d(f)=k-\rk\{L_1,\cdots,L_k\}.$

We prove Item~(\mref{it:genfraca}) by induction on $d$. If $d=0$, then $L_1,\cdots,L_k$ are already linearly independent and we are done.

Suppose Item~(\mref{it:genfraca}) has been proved for all fractions with $d= m\geq 0$ and consider a fraction $f$ with $d=m+1$. Then $d>0$ and $k>\rk\{L_1,\cdots,L_k\}$. Thus there is a linearly independent subset $L_{i_1}, \cdots , L_{i_\ell}$ of $\{L_1, \cdots , L_k\}$ and $L_{i_{\ell+1}}\not\in\{L_{i_1},\cdots,L_{i_\ell}\}$ such that
\begin{equation}
L_{i_{\ell+1}}=\sum_{j=1}^\ell a_jL_{i_j},\quad a_j\neq 0, 1\leq j\leq \ell.
\mlabel{eq:lindb}
\end{equation}
Since the linear forms $L_i$ are not multiples of one another, we have $\ell\geq 2$. Thus
\begin{equation}
\frac 1{L_{i_1}\cdots L_{i_\ell}}=\frac {L_{i_{\ell+1}}}{L_{i_1}\cdots L_{i_\ell}L_{i_{\ell+1}}}= \sum_{j=1}^\ell \frac {a_i}{L_{i_1}\cdots \hat{L_{i_j}}\cdots L_{i_\ell}L_{i_{\ell+1}}},
\mlabel{eq:lind2}
\end{equation}
where $\hat{L_{i_j}}$ indicates that the factor $L_{i_j}$ is absent. Implementing this procedure reduces the total degree of $L_{i_1}, \cdots, L_{i_\ell}$ by one. An  iteration of  this substitution procedure   inside $f$,  terminates after finitely many times, when exactly one of the $L_{i_j}$'s ($1\le j\le \ell$) disappears.

Each term in the resulting sum is of the form
$g:=\frac{1}{L_{i_1}^{t_1}\cdots \hat{L_{i_j}}^{t_j} \cdots L_{i_k}^{t_k}}, 1\leq j\leq \ell$. Hence the linear forms are still not multiples of one another.
Furthermore, by Eq.~(\mref{eq:lindb}), we have
$$\rk \{L_{i_1},\cdots,L_{i_\ell}\} =\rk\{L_{i_1},\cdots,\hat{L}_{i_j},\cdots,L_{i_\ell}, L_{i_{\ell+1}}\} =\ell, \quad 1\leq j\leq \ell.$$
Hence
$$\rk\{L_1,\cdots,L_k\} =\rk\left(\{L_1,\cdots,\cdots,L_k\}\backslash \{L_{i_j}\}\right), \quad 1\le j\le \ell.$$
Thus
$$ d(g)=k-1-\rk\left(\{L_1,\cdots,\cdots,L_k\}\backslash \{L_{i_j}\}\right) =k-1-\rk\{L_1,\cdots,L_k\}=d(f)-1=m.$$
It follows from the induction hypothesis,that each term $g$ in the resulting sum is a linear combination of pure fractions. Then the same is true for $f$ itself. This completes the induction.
\smallskip

(\mref{it:genfracb}) For $f$ in Item~(\mref{it:genfracb}) define $d(f)$ in the same way. We prove the statement by induction on $d$. If $d=0$, then $L_1,\cdots,L_k$ are already linearly independent and we are done.

Suppose the proposition has been proved for all fractions with $d= m\geq 0$ and consider a fraction $f=\frac{1}{L_1^{s_1}\cdots L_k^{s_k}}$ with $d=m+1$. Define
$$\delta(f):= \min\left\{p\in \ZZ_{\geq 0}\,\Big|\, \begin{array}{l}
\exists\, \text{linearly independent } L_{i_1}, \cdots L_{i_\ell} \text{ and } L_j\not\in \{L_{i_1},\cdots,L_{i_\ell}\} \text{ such that} \\
L_j=\sum_{j=1}^\ell c_j L_{i_j}, \text{ with } c_j\neq 0, 1\leq j\leq \ell \text{ and } p \text{ of which being negative}
\end{array}  \right \}.$$
We will prove Item~(\mref{it:genfracb}) for $f$ with $d=m+1$ by induction on $\delta(f)\geq 0$. Since the linear forms $L_i$ are not multiples of one another, we have $\ell\geq 2$. If $\delta(f)=0$, then there is a linearly independent set $\{L_{i_1},\cdots,L_{i_\ell}\}$ and $L_{i_{\ell+1}}$ not in this set such that $L_{i_{\ell+1}}=\sum_{j=1}^\ell a_j L_{i_j}$ with positive coefficients. Then following the substitutions in Item~(\mref{it:genfraca}), we find that $f$ is linear combination of fractions of the form $g:=\frac{1}{L_{i_1}^{t_1}\cdots \hat{L_{i_j}}^{t_j} \cdots L_{i_k}^{t_k}}, 1\leq j\leq \ell$. Since all the $a_j$'s are positive, the coefficients of this linear combination is also positive. Further as in the proof of Item~(\mref{it:genfraca}), by the induction hypothesis, each term $g$ is a linear combination, with positive coefficients, of pure fractions whose linear forms have positive coefficients. Thus $f$ is also such a linear combination. This completes the induction on $d$ in the case when $\delta(f)=0$.

Suppose the induction on $d$ is proved when $\delta(f)=q\geq 0$ and consider a fraction $f=\frac{1}{L_1^{s_1}\cdots L_k^{s_k}}$ with $\delta(f) = q+1$. Let $\{L_{i_1},\cdots,L_{i_\ell}\}$ be independent and $L_{i_{\ell+1}}\not\in \{L_{i_1},\cdots,L_{i_\ell}\}$ such that Eq.~(\mref{eq:lindb}) holds with $q+1$ negative coefficients. Reordering the linear forms if necessary, we can assume $a_1<0$. Applying
$$ \frac{1}{L_{i_1}L_{i_{\ell+1}}} =\frac{1}{L_{i_i}(-a_1L_{i_1}+L_{i_{\ell+1}})} + \frac{-a_1}{(-a_1L_{i_1}+L_{i_{\ell+1}})},$$
we obtain
$$ \frac{1}{L_{i_1}^{s_1}L_{i_{\ell+1}}^{s_{\ell+1}}} =\sum_j \frac{\alpha_j} {L_{i_1}^{u_j}(-a_1L_{i_1}+L_{i_{\ell+1}})^{v_j}} +\sum_j\frac{\beta_j}{(-a_1L_{i_1}+L_{i_{\ell+1}})^{w_j} L_{i_{\ell+1}}^{x_j}}$$
with non-negative $\alpha _i$ and $\beta _j$'s.
Hence
\begin{eqnarray}
&&\frac{1}{L_{1}^{s_1}\cdots L_{k}^{s_{k}}} \mlabel{eq:pcomb}\\
&=&\sum_j \frac{\alpha_j} {L_{i_1}^{u_j}(-a_1L_{i_1}+L_{i_{\ell+1}})^{v_j} L_2^{s_2}\cdots \widehat{L_{i_{\ell+1}}^{s_{i_{\ell+1}}}}\cdots L_k^{s_k}} +\sum_j\frac{\beta_j}{(-a_1L_{i_1}+L_{i_{\ell+1}})^{w_j} L_{i_{\ell+1}}^{x_j}L_2^{s_2}\cdots \widehat{L_{i_{\ell+1}}^{s_{i_{\ell+1}}}}\cdots L_k^{s_k}}.
\notag
\end{eqnarray}

If $-a_1L_{i_1}+L_{i_{\ell+1}}$ is a multiple of an $L_j$ (this can happen for example when $\ell=2$), then since both $-a_1L_{i_1}+L_{i_{\ell+1}}$ and $L_j$ are linear forms with positive coefficients, this multiple must also be positive. Thus in the above two sums, we can replace the power of $L_j$ by that of $-a_1L_{i_1}+L_{i_{\ell+1}}$ and still get a linear combination of positive coefficients.
Thus we can assume that the linear forms in each term $g$ of the two sums in Eq.~(\mref{eq:pcomb}) are not multiples of one another. Then the relation $-a_1L_{i_1}=\sum_{j=2}^\ell a_j L_{i_j}$ in $g$ shows that $\delta(g)\leq q$ and hence the induction hypothesis on $\delta(g)$ applies and expresses $g$ as a positive linear combination prescribed in Item~(\mref{it:genfracb}). Since the coefficients in Eq.~(\mref{eq:pcomb}) are positive, $f$ also has the desired positive linear combination. This completes the induction for $\delta(f)$ and hence for $d$.
\end{proof}

\subsubsection{Conical zeta values and Shintani zeta values}

Recall that for a matrix $M=(c_{ij})\in M_{k\times r}(\ZZ _{\ge 0})$, a {\bf Shintani zeta value (SZV)} is the special value of the {\bf Shintani
zeta function}~\mcite{Ma}

\begin{equation}
\zeta(M;s_1,\cdots,s_k)= \sum_{m_1=1}^\infty \cdots \sum_{m_r=1}^\infty
\frac{1}{(c_{11}m_1+\cdots+c_{1r}m_r)^{s_1}\cdots (c_{k1}m_1+\cdots
+ c_{kr}m_r)^{s_k}},
\label{eq:smzv}
\end{equation}
at $(s_1,\cdots,s_k)\in \ZZ^{k}$ if the sum converges.

\begin{prop}
\begin{enumerate}
\item
Let $C=\langle v_1,\cdots,v_r\rangle $ be a smooth cone and let $M:=M_C$ be the matrix with the spanning set $\{v_1,\cdots,v_r\}$ of $C$ as  column vectors. Then
$$\zeta^o(C;s_1,\cdots,s_k)=\zeta (M;s_1,\cdots,s_k).$$
\mlabel{it:smzv1}
\item
Any CZV is a linear combination of Shintani zeta values.
\mlabel{it:smzv2}
\end{enumerate}
\mlabel{pp:smzv}
\end{prop}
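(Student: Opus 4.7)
The plan is to treat the two items separately: Item (a) is essentially a change of variables, while Item (b) reduces to Item (a) via the open subdivision relation of Lemma~\ref{lem:subd} combined with the smooth refinement provided by Proposition~\ref{pp:ratsm}.

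\textbf{For Item (\ref{it:smzv1}).} Write $v_i=\sum_{j=1}^{k}c_{ji}e_j$ so that $M=(c_{ji})$ has $v_i$ as its $i$-th column. Because $C$ is smooth, the spanning set $\{v_1,\dots,v_r\}$ is part of a $\ZZ$-basis of $\ZZ^k$; hence the map
\[
\Psi:\ZZ_{>0}^{r}\longrightarrow C\cap\ZZ^{k}, \qquad (m_1,\dots,m_r)\longmapsto \sum_{i=1}^{r}m_i v_i
\]
is a bijection. Indeed, any element of $C\cap\ZZ^k$ is a strictly positive $\RR$-combination $\sum\alpha_i v_i$, and extending $\{v_1,\dots,v_r\}$ to a $\ZZ$-basis $\{v_1,\dots,v_k\}$ forces the coordinates $\alpha_i$ to be integers, hence positive integers since the cone is open. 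Substituting $n_j=\sum_{i=1}^{r}c_{ji}m_i$ into the defining series
\[
\zeta^{o}(C;s_1,\dots,s_k)=\sum_{(n_1,\dots,n_k)\in C\cap\ZZ^k}\frac{1}{n_1^{s_1}\cdots n_k^{s_k}}
\]
yields the Shintani sum~\eqref{eq:smzv} verbatim.

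\textbf{For Item (\ref{it:smzv2}).} Let $\zeta(C;\vec s)$ be a convergent CZV with $C$ a rational open cone in $\RR^{k}_{\geq 0}$. By Proposition~\ref{pp:ConeToSim}.(\ref{it:ConeToSim}) we may subdivide $C$ into strongly convex simplicial rational open cones, and then by Proposition~\ref{pp:ratsm}.(\ref{it:ratsm}) each of these can be further subdivided into smooth open cones; combining the two steps produces a smooth subdivision $\{C_i\}$ of $C$. The open subdivision relation of Lemma~\ref{lem:subd} then gives
\[
\zeta(C;\vec s)=\sum_{i}\zeta(C_i;\vec s),
\]
where each term on the right converges by positivity, since $C_i\subseteq C$ implies $\zeta(C_i;\vec s)\leq \zeta(C;\vec s)<\infty$. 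Applying Item~(\ref{it:smzv1}) to each smooth piece $C_i$, with $M_{C_i}$ the matrix whose columns are the primary generators of $C_i$, rewrites each $\zeta(C_i;\vec s)$ as a Shintani zeta value $\zeta(M_{C_i};\vec s)$, and the result follows.

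The only subtlety is the convergence check in Item (b); once one notes that all summands are non-negative and the subdivision produces subsets of $C$, term-by-term convergence is automatic, so there is no real obstacle. The work in Item (a) is really the bookkeeping that smoothness of $C$ guarantees the bijection $\Psi$ has integer-coefficient inverse, which is precisely what prevents fractional lattice points from appearing and ensures that we obtain a genuine Shintani zeta value (as opposed to a more general Dirichlet-type series).
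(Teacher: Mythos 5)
Your proof is correct and follows essentially the same route as the paper: Item (a) by the change of variables that smoothness makes bijective over the lattice, and Item (b) by smooth subdivision plus the open subdivision relation of Lemma~\ref{lem:subd}. You are in fact slightly more careful than the paper, which invokes Proposition~\ref{pp:ratsm} directly where your two-step refinement (simplicial then smooth) and the positivity argument for termwise convergence are the honest justification.
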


\begin{proof}
(\mref{it:smzv1})
Since $C=\ocone{v_1,\cdots,v_r}$ is a
smooth cone generated by primary set $\{v_1,\cdots,v_r\}$,
the set can be expanded to a $\ZZ$-basis $\{v_1,\cdots,v_k\}$ of $\ZZ^k$. Write $v_i=\sum_{j=1}^k c_{ij}e_j, 1\leq i\leq k$. Then $(c_{ij})\in M_{k\times k}(\ZZ_{\geq 0})$ has determinant $1$.
Then from
$\sum_{i=1}^k n_ie_i=\sum_{j=1}^r m_i v_i =\sum_{j=1}^k m_i v_i$ where $m_i=0$ for $r+1\leq i\leq k$, we have
$$(n_1,\cdots,n_k)^\tn=(c_{ji}) (m_1,\cdots,m_k)^\tn,$$
and hence
$$(m_1,\cdots,m_k)^\tn=(c_{ji})^{-1} (n_1,\cdots,n_k)^\tn.$$
Thus we have
\begin{eqnarray*}
\zeta^o(C;s_1,\cdots,s_k) &=& \sum_{\vec{n}\in C} \frac{1}{n_1^{s_1}\cdots n_k^{s_k}} \notag \\
&=& \sum_{m_1,\cdots,m_k\geq 1}  \frac{1}{(\sum_{i=1}^r
c_{i1}m_i)^{s_1}\cdots (\sum_{i=1}^r c_{ik} m_i)^{s_k}}\\
&=&
\zeta(M;s_1,\cdots,s_k).
\end{eqnarray*}
\smallskip

\noindent
(\mref{it:smzv2})
Since any rational cone $C$ can be subdivided into smooth cones
$\{C_1,\cdots,C_p\}$ by Proposition~\mref{pp:ratsm}, we have
$$ \zeta^o(C;s_1,\cdots,s_k)= \sum_{i=1}^p \zeta^o({C_i};s_1,\cdots,s_k),$$
hence is a linear combination of SZVs.
\end{proof}

\begin {defn} {\rm
A matrix $M=(c_{ij})\in M_{k\times n}(\ZZ _{\ge 0})$ is of maximal rank if the rank of the  matrix is $k$. The corresponding SZV will be called a {\bf maximal rank SZV}.
}
\end{defn}

\begin {lem}
\begin{enumerate}
\item
A convergent SZV is a $\QQ$-linear combination of convergent maximal rank SZVs.
\mlabel {it:MaxSZV1}
\item
Any convergent maximal rank SZV is a $\QQ$-linear combination of convergent LZVs.
\mlabel{it:MaxSZV2}
\end{enumerate}
\mlabel {lem:MaxSZV}
\end{lem}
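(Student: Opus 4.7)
My approach for both parts is to decompose the summand $\frac{1}{L_1^{s_1}\cdots L_k^{s_k}}$ of the SZV, where $L_i(\vec z):=c_{i1}z_1+\cdots+c_{ir}z_r$ encodes the $i$-th row of $M$, and then sum termwise over $\vec m\in \ZZ_{\ge 1}^r$ to transfer the fractional decomposition to zeta values. In both settings the $L_i$'s have non-negative integer coefficients, a property crucial for preserving absolute convergence.

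For part~(\mref{it:MaxSZV1}), assume $\mrm{rank}(M)<k$ so that $L_1,\ldots,L_k$ are linearly dependent. I invoke Proposition~\mref{pp:GenFrac}.(\mref{it:genfracb}) to express $\frac{1}{L_1^{s_1}\cdots L_k^{s_k}}$ as a positive rational linear combination of pure fractions $\frac{1}{N_1^{t_1}\cdots N_\ell^{t_\ell}}$ whose linear forms $N_j$ are linearly independent and have non-negative coefficients. Summing such a term over $\vec m$ gives a SZV attached to an $\ell\times r$ matrix of rank $\ell$, i.e., a maximal rank SZV. Positivity of the coefficients, together with absolute convergence of the original series, ensures that each of these maximal rank SZVs converges by direct comparison.

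For part~(\mref{it:MaxSZV2}), assume $\mrm{rank}(M)=k$. The rows $v_1,\ldots,v_k\in\ZZ_{\ge 0}^r$ are linearly independent and span a simplicial rational cone in $\QQ^r_{\ge 0}$. By Proposition~\mref{pp:ratsm}.(\mref{it:ratsm}), this cone admits a subdivision into smooth rational cones $C_j=\ccone{w_{j,1},\ldots,w_{j,k}}$. Applying $\Phi$ and Lemma~\mref{lem:phi}.(\mref{it:phidiv}) yields
\begin{equation*}
\frac{1}{L_1\cdots L_k} \;=\; \sum_j \alpha_j\,\frac{1}{L_{j,1}\cdots L_{j,k}}, \qquad \alpha_j\in\QQ_{>0},\quad L_{j,i}(\vec z):=w_{j,i}\cdot\vec z,
\end{equation*}
each right-hand term being a smooth simple fraction. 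Acting on this identity with $\frac{1}{(s_1-1)!\cdots(s_k-1)!}\partial_{L_1^*}^{s_1-1}\cdots\partial_{L_k^*}^{s_k-1}$, with $\{L_i^*\}$ dual to $\{L_i\}$ in their span, Proposition~\mref{prop:DerFrac}.(\mref{it:DerFrac}) turns the left side into $\frac{1}{L_1^{s_1}\cdots L_k^{s_k}}$, while each $\partial_{L_i^*}=\sum_\ell c_{i\ell}\partial_\ell$ acting on the smooth simple fraction $\frac{1}{L_{j,1}\cdots L_{j,k}}$ produces a $\QQ$-linear combination of smooth pure fractions supported on the same smooth cone $C_j$. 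Summing over $\vec m\in\ZZ_{\ge 1}^r$ converts each such smooth pure fraction into an LZV of the form $\zeta^c([w_{j,1}]^{t_{j,1}}\cdots[w_{j,k}]^{t_{j,k}})$.

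The main obstacle is the convergence of the individual LZVs in part~(\mref{it:MaxSZV2}): the dual coefficients $c_{i\ell}$ can be negative, so the differentiation step may produce mixed-sign combinations, which prevents a direct comparison with the original SZV. I plan to settle this by exploiting $w_{j,i}\in\ccone{v_1,\ldots,v_k}$, which gives $L_{j,i}=\sum_\ell a_{ji\ell}L_\ell$ with $a_{ji\ell}\ge 0$, and by refining the smooth subdivision if necessary so that every decomposition term has all exponents $t_{j,i}\ge 2$, thereby guaranteeing absolute convergence of each LZV on its own.
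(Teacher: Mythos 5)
Your proof follows essentially the same route as the paper's: part (a) is exactly the paper's argument via Proposition~\mref{pp:GenFrac}.(\mref{it:genfracb}) together with positivity and comparison, and part (b) is the paper's smooth-subdivision-plus-differentiation argument via Proposition~\mref{pp:ratsm}.(\mref{it:ratsm}) and Proposition~\mref{prop:DerFrac}. The only place you diverge is the convergence discussion at the end, and there half of your plan is the right fix while the other half is a dead end. The right fix is the observation you already state: each generator $w_{j,i}$ of a subdividing smooth cone lies in $\ccone{v_1,\cdots,v_k}$, so $L_{j,i}=\sum_\ell a_{ji\ell}L_\ell$ with $a_{ji\ell}\ge 0$, hence applying $\partial_{L_r^*}$ to $1/L_{j,i}$ produces the coefficient $(L_{j,i},L_r^*)=a_{jir}\ge 0$. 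Consequently the fully differentiated decomposition of $1/(L_1^{s_1}\cdots L_k^{s_k})$ is a linear combination of smooth pure fractions with non-negative rational coefficients, and every summand is positive at each $\vec m\in\ZZ^r_{\ge 1}$, so convergence of each LZV appearing with nonzero coefficient follows by direct comparison with the convergent SZV, exactly as in your part (a) and as the paper argues in Lemma~\mref{lem:CSubd}; no mixed signs ever occur. By contrast, the proposed refinement of the subdivision so that all exponents satisfy $t_{j,i}\ge 2$ can neither be achieved nor is it needed: the exponents arise from distributing the $s_1+\cdots+s_k-k$ derivatives among the $k$ factors, so their total is fixed at $s_1+\cdots+s_k$ and terms with some exponent equal to $1$ are unavoidable (for instance the term in which all derivatives hit a single factor); refining the geometric subdivision changes the cones, not the exponent patterns, so this step of your plan would fail, while the positivity argument above already completes the proof.
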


\begin{proof}
(\mref{it:MaxSZV1})
For $M=(c_{ij})\in M_{k\times n}(\ZZ _{\ge 0})$,
$$
\zeta(M;s_1,\cdots,s_k)=\hspace{-.5cm} \sum_{m_1,\cdots,m_n>0}
\frac{1}{(c_{11}m_1+\cdots+c_{1n}m_n)^{s_1}\cdots (c_{k1}m_1+\cdots
+ c_{kn}m_n)^{s_k}}.
$$
By Proposition~\mref{pp:GenFrac}.(\mref{it:genfracb}), the fraction $\frac{1}{(c_{11}m_1+\cdots+c_{1n}m_n)^{s_1}\cdots (c_{k1}m_1+\cdots
+ c_{kn}m_n)^{s_k}}$ is a positive linear combination of fractions in which the linear forms are linearly independent and have positive coefficients.
Since $\zeta(M;s_1,\cdots,s_k)$ is convergent, all these fractions give rise to convergent SZVs,  leading to the desired linear combination.
\smallskip

\noindent
(\mref{it:MaxSZV2}) For $M=(c_{ij})\in M_{k\times n}(\ZZ _{\ge 0})$ with maximal rank, i.e. $rank(M)=k$, we know that the cone
spanned by vectors $v_1=(c_{11},\cdots c_{1n}), \cdots , v_k=(c_{k1},\cdots c_{kn})$ can be subdivided into smooth cones $C_i=[v^{(i)}_1]\cdots [v^{(i)}_k]$, $v^{(i)}_j=(c^{(i)}_{j1},\cdots ,c^{(i)}_{jn})$, $i=1,\cdots, l$, so we have   subdivision relations,
$$\frac{1}{(c_{11}m_1+\cdots+c_{1n}m_n)\cdots (c_{k1}m_1+\cdots
+ c_{kn}m_n)}=\sum _i \frac {a_i}{(c^{(i)}_{11}m_1+\cdots+c^{(i)}_{1n}m_n)\cdots (c^{(i)}_{k1}m_1+\cdots
+ c_{kn}m_n)}
$$
where the $a_i$'s are positive rational numbers.

Taking derivatives and applying Proposition \mref {prop:DerFrac} yields the conclusion.
\end{proof}

\begin {theorem} Convergent CZVs, LZVs and SZVs span the same linear space over $\QQ$. Thus
$$\QQ \czvset =\QQ \lzvset =\QQ \szvset.$$
\end{theorem}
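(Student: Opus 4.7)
The plan is to close a loop of three inclusions
$$\QQ\czvset \subseteq \QQ\szvset \subseteq \QQ\lzvset \subseteq \QQ\czvset,$$
each of which is essentially a direct application of a previously established result.

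First, for $\QQ\czvset \subseteq \QQ\szvset$, I would invoke Proposition~\mref{pp:smzv}. Part~(\mref{it:smzv1}) says that for a smooth cone $C$ the value $\zeta^o(C;\vec s)$ is exactly the Shintani zeta value attached to the matrix $M_C$ whose columns are the primary generators. Part~(\mref{it:smzv2}) then uses Proposition~\mref{pp:ratsm}.(\mref{it:ratsm}) to subdivide an arbitrary rational cone into smooth cones, so that any CZV equals a finite sum of SZVs by the open subdivision relation (Lemma~\mref{lem:subd}). The only subtle point to check is that convergence is preserved under such a smooth subdivision, but this is automatic since all the summands in the subdivided sum are non-negative, so each subdivided SZV is bounded by the original convergent CZV.

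Next, for $\QQ\szvset \subseteq \QQ\lzvset$, I would apply Lemma~\mref{lem:MaxSZV} in two steps. Part~(\mref{it:MaxSZV1}) reduces a convergent SZV to a $\QQ$-linear combination of convergent maximal rank SZVs via Proposition~\mref{pp:GenFrac}.(\mref{it:genfracb}) on decomposing the underlying fraction into pure fractions with positive linear forms; again convergence passes to each summand since all the coefficients in the decomposition are positive. Part~(\mref{it:MaxSZV2}) then expresses each convergent maximal rank SZV as a $\QQ$-linear combination of convergent LZVs, by subdividing the cone spanned by the rows of the matrix $M$ into smooth cones and applying the differentiation procedure of Proposition~\mref{prop:DerFrac} to recover arbitrary exponents.

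Finally, for $\QQ\lzvset \subseteq \QQ\czvset$, I would appeal to Lemma~\mref{lem:oc}.(\mref{it:oclc}), which states directly that any LZV is a CZV: given a decorated closed cone $D = [v_1]^{s_1}\cdots [v_k]^{s_k}$ in $\dsmc_0$, the surjectivity of $p^c$ (Lemma~\mref{lem:oc}.(\mref{it:ocsur})) furnishes a transpose decorated open cone $D^\tn$ such that $(D^\tn, D)$ is a cone pair, and then Lemma~\mref{lem:oc}.(\mref{it:oceq}) gives $\zeta^o(D^\tn) = \zeta^c(D)$, so the LZV literally equals a CZV. Chaining the three inclusions yields the stated equality $\QQ\czvset = \QQ\lzvset = \QQ\szvset$. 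No single step is a real obstacle here since all the hard work has been absorbed into earlier results; the only mild care needed is to verify at each step that a convergent value on one side decomposes into a combination in which each individual term on the other side is itself convergent, which holds throughout because the intermediate decompositions all have positive coefficients.
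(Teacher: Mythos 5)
Your proposal is correct and follows essentially the same route as the paper: the paper's proof consists precisely of the three inclusions $\QQ\czvset\subseteq\QQ\szvset$ (Proposition~\mref{pp:smzv}), $\QQ\szvset\subseteq\QQ\lzvset$ (Lemma~\mref{lem:MaxSZV}), and $\QQ\lzvset\subseteq\QQ\czvset$ (Lemma~\mref{lem:oc}), cited exactly as you do. Your added remarks on convergence being inherited by the summands via positivity of the coefficients are consistent with how those earlier results are proved in the paper.
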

\begin{proof}
By Lemma~\mref{lem:MaxSZV}, we have
$\QQ\szvset \subseteq \QQ \lzvset$. By Lemma~\mref{lem:oc} we have $\QQ\lzvset \subseteq \QQ\czvset.$ By Proposition~\mref{pp:smzv}.(\mref{it:smzv2}), we have $\QQ\czvset \subseteq \QQ\szvset.$
\end{proof}

\begin{coro}
The linear maps $\zeta^o$ and $\zeta^c$ are surjective.
\mlabel{co:surj}
\end{coro}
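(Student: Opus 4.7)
The plan is to derive both surjectivity statements as essentially immediate corollaries of the preceding theorem $\QQ\czvset=\QQ\lzvset=\QQ\szvset$, together with the definitions in Section~\ref{sec:czv} and Section~\ref{ss:osub}.

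First I would handle $\zeta^o$. By Definition~\ref{de:cmzv} and the discussion that follows, the space $\QQ\ocmzvset$ is by definition the $\QQ$-linear span of the convergent conical zeta values $\zeta(C;\vec s)$ as $(C;\vec s)$ ranges over all pairs with $C$ an open rational cone in $\RR^k_{\geq 0}$ and $\vec s\in\ZZ^k_{\geq 0}$ yielding a convergent series. These are precisely the elements of $\doc_0$, and by the definition of $\zeta^o$ in Eq.~(\ref{eq:zetao}), we have $\zeta^o(C;\vec s)=\zeta(C;\vec s)$. Hence the image of $\zeta^o$ already contains every generator of $\QQ\ocmzvset$, so $\zeta^o$ is surjective by construction.

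Next I would treat $\zeta^c$. By its definition in Eq.~(\ref{eq:ccencode}), the image of $\zeta^c$ is spanned by the values $\zeta^c([v_1]^{s_1}\cdots[v_k]^{s_k})$ for $[v_1]^{s_1}\cdots[v_k]^{s_k}\in\dsmc_0$, which are by definition precisely the convergent linearly constrained zeta values. Thus the image of $\zeta^c$ is exactly $\QQ\lzvset$. Invoking the theorem just established, $\QQ\lzvset=\QQ\czvset=\QQ\ocmzvset$, so $\zeta^c$ is also surjective onto $\QQ\ocmzvset$.

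There is no real obstacle here: both statements are formal consequences of the identifications already set up. The substantive work was done in proving the equality $\QQ\czvset=\QQ\lzvset=\QQ\szvset$ (which in turn rested on Proposition~\ref{pp:smzv}, Lemma~\ref{lem:MaxSZV}, and Lemma~\ref{lem:oc}), and in arranging matters so that the codomain of both maps is the same space $\QQ\ocmzvset$. The corollary thus requires only a short paragraph unpacking these definitions.
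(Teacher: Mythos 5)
Your proof is correct and follows exactly the route the paper intends: $\zeta^o$ is surjective essentially by the definition of $\QQ\ocmzvset$ as the span of convergent CZVs, while the image of $\zeta^c$ is $\QQ\lzvset$, which coincides with the codomain by the theorem $\QQ\czvset=\QQ\lzvset=\QQ\szvset$ proved just before the corollary. Nothing further is needed.
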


\noindent
{\bf Acknowledgements}:
L.~Guo acknowledges support from NSF grant DMS 1001855. B. Zhang thanks support from NSFC grant 11071176 and 11221101.

\bibliographystyle{plain}

\end{document}